\documentclass{article}

\usepackage{amsmath,amssymb,amsthm,graphicx,proof,color}
\usepackage[UKenglish]{babel}

\usepackage{hyperref}
\usepackage[all]{xy}

\DeclareSymbolFont{symbolsC}{U}{txsyc}{m}{n}
\DeclareMathSymbol{\strictif}{\mathrel}{symbolsC}{74}
\renewcommand{\prec}{\strictif}

\newcommand{\bis}{\mathrel{\mathchoice%
{\raisebox{.3ex}{$\,
  \underline{\makebox[.7em]{$\leftrightarrow$}}\,$}}%
{\raisebox{.3ex}{$\,
  \underline{\makebox[.7em]{$\leftrightarrow$}}\,$}}%
{\raisebox{.2ex}{$\,
  \underline{\makebox[.5em]{\scriptsize$\leftrightarrow$}}\,$}}%
{\raisebox{.2ex}{$\,
  \underline{\makebox[.5em]{\scriptsize$\leftrightarrow$}}\,$}}}}

\renewcommand{\vartriangleleft}{\Rrightarrow}

\hypersetup{
   colorlinks,%
   citecolor=blue,%
   filecolor=black,%
   linkcolor=red,%
   urlcolor=black
 }

\usepackage[all]{xy}
\usepackage{tikz}
\usetikzlibrary{trees}
\usepgflibrary{arrows}
\usetikzlibrary{positioning,shapes}
\usetikzlibrary{patterns}

\newcommand{\M}{\mathcal{M}}

\newcommand{\TAUT}{\ensuremath{\texttt{TAUT}}}

\newcommand{\MP}{\ensuremath{\texttt{MP}}}

\newcommand{\BP}{\ensuremath{\textbf{P}}}

\newcommand{\SPLKw}{\ensuremath{\mathbb{CL}}}
\newcommand{\SPLKwT}{\ensuremath{\mathbb{CLT}}}
\newcommand{\SPLKwB}{\ensuremath{\mathbb{CLB}}}

\newcommand{\SPLKwTr}{\ensuremath{\mathbb{CL}4}}

\newcommand{\SPLKwEuc}{\ensuremath{\mathbb{CL}5}}

\newcommand{\SPLKwTrEuc}{\ensuremath{\mathbb{CL}45}}
\newcommand{\SPLKwTTr}{\ensuremath{\mathbb{CLS}4}}
\newcommand{\SPLKwTEuc}{\ensuremath{\mathbb{CLS}5}}

\newcommand{\lr}[1]{\langle #1 \rangle}
\newcommand{\lra}{\leftrightarrow}

\newcommand{\GEN}{\ensuremath{\texttt{GEN}\Delta}}
\newcommand{\REKw}{\ensuremath{\texttt{RE}\Delta}}
\newcommand{\KwCon}{\ensuremath{\Delta\texttt{Con}}}
\newcommand{\KwDis}{\ensuremath{\Delta\texttt{Dis}}}
\newcommand{\EquiKw}{\ensuremath{{\Delta\texttt{Equ}}}}

\newcommand{\KwT}{\ensuremath{\Delta\texttt{T}}}
\newcommand{\KwTr}{\ensuremath{\Delta\texttt{4}}}
\newcommand{\KwEuc}{\ensuremath{\Delta\texttt{5}}}
\newcommand{\KwB}{\ensuremath{\Delta\texttt{B}}}
\newcommand{\Tr}{\ensuremath{\texttt{w}\Delta4}}
\newcommand{\Euc}{\ensuremath{\texttt{w}\Delta5}}

\renewcommand{\phi}{\varphi}
\renewcommand{\sharp}{\#}

\newtheorem{theorem}{Theorem}
\newtheorem{lemma}[theorem]{Lemma}
\newtheorem{definition}[theorem]{Definition}
\newtheorem{proposition}[theorem]{Proposition}

\newtheorem{corollary}[theorem]{Corollary}

\newtheorem{fact}[theorem]{Fact}

\newtheorem{conjecture}[theorem]{Conjecture}

\newcommand{\weg}[1]{}

\title{A Modal Logic of Supervenience}

\author{Jie Fan\\
\small School of Philosophy, Beijing Normal University  \\
\small \texttt{fanjie@bnu.edu.cn}}
\date{26th July 2016 --- \today}

\begin{document}
\maketitle

\begin{abstract}
Supervenience is an important philosophical concept. In this paper, inspired by the supervenience-determined consequence relation and the semantics of agreement operator, we introduce a modal logic of supervenience, which has a dyadic operator of supervenience as a sole modality. The semantics of supervenience modality is very natural to correspond to the supervenience-determined consequence relation, in a quite similar way that the strict implication corresponds to the inference-determined consequence relation. We show that this new logic is more expressive than the modal logic of agreement, by proposing a notion of bisimulation for the latter logic. We provide a sound proof system for our new logic. We also lift on to more general logics of supervenience. Related to this, we compare propositional logic of determinacy and non-contingency in expressive powers, and give axiomatizations of propositional logic of determinacy over various classes of frames, thereby resolving an open research direction listed in~\cite[Sec.~8.2]{Gorankoetal:2016}. As a corollary, we also present an alternative axiomatization for propositional logic of determinacy over universal models. We conclude with a lot of future work.
\end{abstract}

\noindent Keywords: supervenience, agreement, determinacy, contingency, expressivity, axiomatization, bisimulation

\section{Introduction}
\subsection{Philosophical motivation}
Past decades witness an increasing interest in the concept of {\em supervenience}, which has traditionally been used as a relation between sets of properties.\footnote{There are some exceptions, though. For instance, in~\cite{Hellmanetal:1975},~\cite{Haugeland1982-HAUWS} and~\cite{Davidson:1985}, supervenience is a relation between sets of predicates, between a pair of languages, and between a predicate and a set of predicates in a language, respectively.}\weg{The notion of supervenience is assumed to be a relation between two families of properties.} A set $A$ of properties (called `supervenient properties') is said to {\em supervene on} another set $B$ (called `subveinent properties'), just in case if $B$-properties are indistinguishable, then so are $A$-properties; in other words, agreement in respect of $B$-properties implies agreement in respect of $A$-properties. In slogan form, ``there cannot be an $A$-different without a $B$-difference''~\cite{McLaughlinetal-supervenience}. \weg{Usually, $A$ and $B$ are called `the supervenient class' and `the subvenient class', respectively.}The core idea of supervenience is that fixing subvenient properties fixes its supervenient ones; or equivalently, subvenient properties {\em determine} supervenient properties.

The notion of supervenience dates back at least to\weg{generally acknowledged to be traceable to} G.~E.~Moore's classical work~\cite{Moore:1922}, where he described some certain dependency relationship between moral and non-moral properties\weg{it is argued that moral properties supervene upon non-moral properties}\weg{some certain dependency relationship between moral and non-moral properties are described that has
later come to be called "supervenience"}. However, Moore did not use the term `supervenience' explicitly;\weg{it was R.~M.~Hare who coined the term `supervenience' in~\cite{Hare:1952}.} it was R.~M.~Hare~\cite{Hare:1952} that introduced the term into the philosophical literature, to characterize a relationship between moral properties and natural properties.\footnote{Hare~\cite[p.~145]{Hare:1952} stated ``First, let us take that characteristic of `good' which has been called its supervenience. Suppose that we say `St. Francis was a good man'. It is logically impossible to say this and to maintain at the same time that there might have been another man placed in precisely the same circumstances as St. Francis, and who behaved in them in exactly the same way, but who differed from St. Francis in this respect only, that he was not a good man.'' Similar idea can be identified in other places in that book, e.g.~pp.~80-81, p.~134, p.~153. Here, Hare spoke of supervenience as a characteristic of the term `good'. But as~\cite[p.~155]{Kim:1984} commented, it is better to think of supervenience as a relation between the property of being a good man and the properties such as patterns of behavior and traits of character.} Thanks to Donald Davidson~\cite{Davidson:1970}, the term `supervenience' was first introduced into contemporary philosophy of mind,\footnote{Donald Davidson used psychophysical supervenience to defend a position of anomalous monism that although the mental supervenes on the physical, the former cannot be reduced to the latter, as he said on~\cite[p.~88]{Davidson:1970}: ``Although the position I describe denies there are psychophysical laws, it is consistent with the view that mental characteristics are in some sense dependent, or supervenient, on physical characteristics. Such supervenience might be taken to mean that there cannot be two events alike in all physical respects but differing in some mental respect, or that an object cannot alter in some mental respect without altering in some physical respect. Dependence or supervenience of this kind does
not entail reducibility through law or definition $\cdots$.'' Such a supervenience thesis was explicitly advocated on Davidson~\cite[pp. 716-717]{Davidson:1973}.} which opened up a new research direction in this area and other branches of philosophy\weg{a research interest in supervenience in its own right}, see e.g.~\cite{Hellmanetal:1975,Kim:1978,Kim:1979,Horgan:1981,Haugeland1982-HAUWS,Horgan:1982,Kim:1982,Lewis:1983,Horgan:1984,Horgan:1993,Leuenberger:2009}. It is alleged (e.g.~\cite{McLaughlinetal-supervenience}) that every major figure in the history of western philosophy has been at least implicitly committed to some supervenience thesis. For example,  Leibniz used the Latin word `supervenire', to state the thesis that relations are supervenient on properties (e.g.~\cite{Ishiguro:1972}); G.~E.~Moore stated that ``one of the most important facts about qualitative difference~$\cdots$ [is that] two things cannot differ in quality without differing in intrinsic nature''~(\cite[p.~263]{Moore:1922}); David Lewis used a thesis of Humean supervenience to express that the whole truth about a world like ours supervenes on the spatiotemporal distribution of local qualities~\cite[pp.~ix--xvi]{Lewis:1986a}. 

The notion of supervenience is ubiquitous in our daily life. For instance, the aesthetic properties of a work of art supervene on its physical properties, the price of a commodity supervenes on its supply and demand, effects supervene on causes, and the mental supervenes on the physical. According to the chart of levels of existence~\cite{Supervenience-wiki}, atoms supervene on elementary particles, molecules supervene on atoms, cells supervene on molecules, and so on. Moreover, a number of interesting doctrines and problems can be formulated in terms of supervenience. A paradigmatic example is physicalism, which may be construed as a thesis that ``everything supervenes on the physical''~\cite{Stoljar:2015}. Mereology may be explained as mereological supervenience, i.e., the whole supervenes on its parts~(cf.~e.g.~\cite[p.~101]{Kim1984-KIMEAS}). Determinism can be roughly construed as a thesis that everything to the future supervenes on the present, and perhaps past, facts. All of the distinction between internalism and externalism can be characterized by means of supervenience theses~\cite{McLaughlinetal-supervenience}. Mind-body problem may be rephrased as to whether the psychophysical supervenience thesis holds, i.e., are psychological properties supervenient upon physical properties (e.g.~\cite{Davidson:1970,Kim:1979,Kim:1982,Kim1982-KIMPSA})?

There are so many distinct formulations for this concept, e.g., individual supervenience, local supervenience, global supervenience, weak supervenience, strong supervenience, similarity-based supervenience, regional supervenience, local-local supervenience and strong-local-local supervenience, multiple domain supervenience (c.f. e.g. \cite{Kim:1984,Kim:1987,Horgan:1982,Haugeland1982-HAUWS,Hofweber:2005,Kim:1988}),
that David Lewis thought of it as an `unlovely proliferation'~\cite[p.~14]{Lewis:1986}. No matter how different the formulations are, they all conform to the aforementioned core idea of supervenience --- that is, fixing the subvenient properties fixes the supervenient properties. 

Supervenience has many applications, among which a central use is so-called `argument by a false implied supervenience thesis'. It is well known that the reduction of A to B implies the supervenience of A on B; in short, reduction implies supervenience. Thus for one to argue against a reduction thesis, it suffices to falsify the corresponding supervenience thesis. Other applications include characterizing the distinctions between Internalism and Externalism, characterizing physicalism, characterizing haecceitism, and so on. For the details of all these applications, a highly recommendation would be~\cite{McLaughlinetal-supervenience}. 

In spite of so many philosophical discussions for the notion of supervenience, there have been few studies in the sphere of logic. The only logical work dealing with supervenience that we have found in the literature are a series of publications written by Humberstone~\cite{Humberstone:1992,Humberstone:1993,Humberstone:1996,Humberstone:1998}, all of which are in terms of valuations/truth assignments. The related notions of supervenience are contingency, agreement, and dependence/determination, see~\cite{Fanetal:2015,Humberstone:2002,Gorankoetal:2016} and references thereof.\footnote{It was argued in~\cite{Rickles:2006} that supervenience is the converse concept of determination. That is, to say that a set $A$ of properties supervenes on another set $B$, is equivalent to say that $B$ determines $A$.} We will propose a modal logic of supervenience, and compare our logic with these related logics.

In this paper, unlike Humberstone's method, we will treat the notion of supervenience as a primitive modality, based on possible worlds rather than valuations. This idea seems very natural, since for instance, in an oft-cited work~\cite{Horgan:1993}, Horgan claimed, ``Supervenience, then, is a modal notion.'' (p.~555). 
Besides, instead of exploring supervenience for properties, we investigate the supervenience relation between two (sets of) sentences/formulas, which can be justified by Kim's claim ``One could also speak of supervenience for {\em sentences}, facts, events, propositions, and languages''~\cite[p.~155]{Kim:1984}. We will demonstrate that the sentence supervenience has many similar results to the property supervenience.


\subsection{Technical motivation}

Technically, our paper is mainly inspired by a notion of supervenience-determined consequence relation, together with the semantics of an agreement operator in the literature.

Humberstone~\cite{Humberstone:1993} distinguished between two types of consequence relations: inference-determined and supervenience-determined. Inference-determined version is just Tarski's consequence relation, i.e., given a class of valuations $\mathcal{V}$, the inference-determined consequence relation by $\mathcal{V}$, denoted $\vDash_\mathcal{V}$, is defined as:
\[
\begin{array}{lcl}
\Gamma\vDash_\mathcal{V} A&\iff& \text{for all valuations }v\in \mathcal{V},\\
&&\text{if }v(B)=T\text{ for each }B\in\Gamma, \text{ then }v(A)=T.\\
\end{array}
\]
In comparison, the consequence relation {\em supervenience-determined} by $\mathcal{V}$, denoted $\Vdash_\mathcal{V}$, is defined as:
\[
\begin{array}{lcl}
\Gamma\Vdash_\mathcal{V} A&\iff& \text{for all valuations }u,v\in \mathcal{V},\\
&&\text{if }u(B)=v(B)\text{ for each }B\in\Gamma, \text{ then }u(A)=v(A).\\
\end{array}
\]

\weg{Humberstone~\cite{Humberstone:1993} differentiated two consequence relations: inference-determined and supervenience-determined. Inference-determined consequence relation is just the standard definition for Tarski's consequence relation, i.e., given a class of valuations $\mathcal{V}$, the inference-determined consequence relation by $\mathcal{V}$, denoted $\vDash_\mathcal{V}$, is defined as:
\[
\begin{array}{lcl}
\{A_1,\cdots,A_n\}\vDash_\mathcal{V} B&\iff& \text{for all valuations }v\in \mathcal{V},\\
&&\text{if }v(A_i)=T\text{ for each }A_i, \text{ then }v(B)=T.\\
\end{array}
\]
In comparison, the consequence relation {\em supervenience-determined} by $\mathcal{V}$, denoted $\Vdash_\mathcal{V}$, is defined as:
\[
\begin{array}{lcl}
\{A_1,\cdots,A_n\}\Vdash_\mathcal{V} B&\iff& \text{for all valuations }u,v\in \mathcal{V},\\
&&\text{if }u(A_i)=v(A_i)\text{ for each }A_i, \text{ then }u(B)=v(B).\\
\end{array}
\]}

Inspired by the supervenience-determined consequence relation, in a rather natural sense, we introduce a dyadic operator $\vartriangleleft$ and interpret it on a Kripke model $\M$ with a domain $W$ as follows:
\[\begin{array}{lclr}
\M,w\vDash B\vartriangleleft A&\iff&\text{for all }u,v\in W, \text{ if }(\M,u\vDash B\iff \M,v\vDash B),& \\
&&\text{then }(\M,u\vDash A\iff \M,v\vDash A).&(\text{Def~1})
\end{array}\]

This definition is in line with the supervenience-determined consequence relation outlined above. Note that we now take $u$ and $v$ as possible worlds rather than valuations. An obvious difference between possible worlds and valuations is, whereas there can be two distinct possible worlds in a model which agree on all formulas, that would not be so for valuations.

But note that the modality $\vartriangleleft$ is global but not local, in the sense that its truth does not depend on the designated state where it is evaluated. An equivalent saying for this is that $\vartriangleleft$ is defined on a {\em universal} model.

There are also alternative variations for (Def~1). Recall that the agreement operator (denoted $O$) is interpreted on a generalized model $\M=\lr{W,S,V}$, where $W$ and $V$ are as usual, and $S$ is a ternary relation without any constraints (for more details, see~\cite[p.~107]{Humberstone:2002}, or Section~\ref{subsec.agreement}).
\[\begin{array}{lcl}
\M,w\vDash OA&\iff&\text{for all }u,v\in W\text{ such that }S_wuv, \\
&&\text{we have }(\M,u\vDash A\iff \M,v\vDash A).
\end{array}\]

Now inspired by the semantics of agreement operator, we add the premise $S_wuv$ into the right-hand side of (Def~1), thereby obtaining a much more general semantics for $\vartriangleleft$:
\[\begin{array}{lclr}
\M,w\vDash B\vartriangleleft A&\iff& \text{for all }u,v\in \M\text{ such that }S_wuv,&\\
&&\text{if }(\M,u\vDash B\iff \M,v\vDash B),&\\
&&\text{then }(\M,u\vDash A\iff\M,v\vDash A).&(\text{Def~2})\\
\end{array}\]
The modality $\vartriangleleft$ in (Def~2) is now local, which takes the global version (Def~1) as a special case when the accessibility relation $S$ is universal, in the sense that for all $w,u,v\in W$, it holds that $S_wuv$.

\medskip

The reminder is organized as follows. After briefly reviewing the related logics in the literature (Sec.~\ref{sec.preliminaries}), we introduce a modal logic $\mathcal{L}_\vartriangleleft$ of supervenience, defining its language and semantics (Sec.~\ref{sec.supervenience}). Then we compare the relative expressive powers of $\mathcal{L}_\vartriangleleft$ and the modal logic $\mathcal{L}_O$ of agreement, by proposing a bisimulation notion for the latter (Sec.~\ref{sec.exp-ls-lo}). Besides providing a sound proof system for $\mathcal{L}_\vartriangleleft$ (Sec.~\ref{sec.soundsystem}), we lift the new logic on to more general logics of supervenience in Sec.~\ref{sec.generalizedsup}, depending on the arity of the concerned supervenience operators, which take propositional logic $\mathcal{L}_D$ of determinacy as a special case when the underlying accessibility relation is universal. Sec.~\ref{sec.generalizedsup} contains two main results: Sec.~\ref{sec.exp-lcld} compares the expressive power of contingency logic $\mathcal{L}_\Delta$ and $\mathcal{L}_D$, where it turns out that they are equally expressive over the class of all models; in Sec.~\ref{sec.axiomatization-ld}, we give axiomatizations for $\mathcal{L}_D$ over various classes of frames, via a reduction to the completeness of their based axiomatizations for $\mathcal{L}_\Delta$. The two results obtained in Sec.~\ref{sec.generalizedsup}, we think, complete an open research direction listed in~\cite{Gorankoetal:2016}. As a corollary, we present an alternative axiomatization for $\mathcal{L}_D$ over universal models. We conclude with a lot of future work in Sec.~\ref{sec.conclusions}.

In summary, our contributions consist of:
\begin{itemize}
\item A semantics of the supervenience operator~(Sec.~\ref{sec.supervenience}) and a sound proof system for the modal logic $\mathcal{L}_\vartriangleleft$ of supervenience~(Sec.~\ref{sec.soundsystem});
\item A bisimulation notion for the modal logic $\mathcal{L}_O$ of agreement (Sec.~\ref{sec.exp-ls-lo});
\item $\mathcal{L}_\vartriangleleft$ is more expressive than $\mathcal{L}_O$ over the class of all models~(Sec.~\ref{sec.exp-ls-lo});
\item Contingency logic is equally expressive as propositional logic $\mathcal{L}_D$ of determinacy over the class of all models (Sec.~\ref{sec.exp-lcld});
\item Various axiomatizations for $\mathcal{L}_D$ over various frame classes, and completeness proof via a reduction method (Sec.~\ref{sec.axiomatization-ld});
\item An alternative axiomatization for $\mathcal{L}_D$ over universal models (Sec.~\ref{sec.axiomatization-ld}).
\end{itemize}

\section{Preliminaries}\label{sec.preliminaries}

Throughout this paper, we let $\BP$ denote the set of proposition symbols, and let $p$ denote an element of $\BP$. Also, we use the notation $\mathcal{D}$, $\mathcal{T}$, $4$, $5$, $\mathcal{B}$, $\mathcal{S}4$, $\mathcal{S}5$, $45$, and $\mathcal{KD}45$ to stand for, respectively, the class of all serial frames, the class of all reflexive frames, the class of all transitive frames, the class of all Euclidean frames, the class of all symmetric frames, the class of all reflexive and transitive frames, the class of reflexive and Euclidean frame, the class of transitive and Euclidean frames, and the class of all serial, transitive and Euclidean frames.

\subsection{Contingency logic}\label{sec.contingency}

Contingency logic is an extension of propositional logic with a primitive modality $\Delta$. In symbol, contingency logic $\mathcal{L}_\Delta$ is defined inductively as the following BNF:
$$A::=p\mid \neg A\mid A\land A\mid \Delta A$$
Where $\Delta A$ is read ``it is {\em non-contingent} that $A$''. Boolean connectives are interpreted as usual, and the non-contingency operator $\Delta$ is interpreted by the following semantics: given a Kripke model $\M=\lr{W,R,V}$ and a world $w\in W$,
\[
\begin{array}{lll}
\M,w\vDash\Delta A&\iff &\text{for all }u,v\in W\text{ such that }wRu\text{ and }wRv, \\
&&\text{we have }(\M,u\vDash A\iff \M,v\vDash A).\\
\end{array}
\]
It is known that $\Delta$ is definable in terms of the necessity operator $\Box$, as $\Delta A=_{df}\Box A\lor\Box\neg A$, see e.g.~\cite{MR66}. However, $\Delta$ has some advantages over $\Box$, for example, as the definition itself indicates, the statements with $\Delta$ is exponentially more succinct than those with $\Box$ instead of $\Delta$ which are intended to express the same propositions~\cite{Hansetal:2014}. The modality $\Delta$ also invites novel techniques in axiomatizing uni-modal and poly-modal $\mathcal{L}_\Delta$ over various frame classes, see~\cite{Humberstone95,DBLP:journals/ndjfl/Kuhn95,DBLP:journals/ndjfl/Zolin99,hoeketal:2004,Fanetal:2014,Fanetal:2015}. A variety of axiomatizations have been proposed in the literature, we here adopt the axiomatizations of~\cite{Fanetal:2015} for the sake of reference in Sec.~\ref{sec.axiomatization-ld}.\footnote{As shown in~\cite[pp.~110-111]{Humberstone:2002}, the axiom $\KwDis$ can be replaced with $\Delta A\to \Delta(A\to D)\vee \Delta(\neg A\to D)$, which is denoted by {\bf H}. The proof is as follows: firstly, {\bf H} is obviously a special instance of $\KwDis$ when $B=C=D$. Conversely, let $D$ be $(A\to B)\land(\neg A\to C)$, then it is easy to show that $(A\to B)\lra(A\to(A\to B)\land(\neg A\to C))$ and $(\neg A\to C)\lra(\neg A\to(A\to B)\land(\neg A\to C))$, i.e. $(A\to B)\lra(A\to D)$ and $(\neg A\to C)\lra(\neg A\to D)$, respectively, where $B,C$ are arbitrary. Then using the rule $\REKw$, we infer the axiom $\KwDis$ from {\bf H}.}

\begin{center}
\begin{tabular}{lclc}
\multicolumn{4}{c}{System \SPLKw}\\
\multicolumn{2}{l}{Axiom Schemas}&Rules&\\
\TAUT & \text{all instances of tautologies} & \MP & $\dfrac{A,A\to B}{B}$ \\
\KwCon & $\Delta(A\to B)\land\Delta(\neg A\to B)\to\Delta B$&  \GEN & $\dfrac{A}{\Delta A}$\\
\KwDis & $\Delta A\to \Delta (A\to B)\lor \Delta(\neg A\to C)$ &\REKw & $\dfrac{A\lra B}{\Delta A\lra\Delta B}$ \\
\EquiKw & $\Delta A\lra\Delta\neg A$ && \\
\end{tabular}
\end{center}

\[ \begin{array}{|l|l|l|l|}
  \hline
  \text{Notation}& \text{Axiom Schemas}& \text{Systems} & \text{Frames} \\
\hline
  && \SPLKw & {\mathcal D}\\
  \hline
  \KwT & \Delta A\land\Delta(A\to B)\land A\to\Delta B& \SPLKwT=\SPLKw+\KwT & {\mathcal T} \\
  \KwTr & \Delta A\to\Delta(\Delta A\vee B)& \SPLKwTr=\SPLKw+\KwTr & 4\\
  \KwEuc & \neg\Delta A\to\Delta(\neg\Delta A\vee B)&\SPLKwEuc=\SPLKw+\KwEuc & 5\\
  \KwB   & A\to\Delta((\Delta A\land\Delta(A\to B)\land\neg\Delta B)\to C)& \SPLKwB=\SPLKw+\KwB& {\mathcal B}\\
   \Tr  & \Delta A\to\Delta\Delta A &\SPLKwTTr=\SPLKw+\KwT+\Tr & {\mathcal S}4\\
   \Euc   & \neg\Delta A\to\Delta\neg\Delta A &\SPLKwTEuc=\SPLKw+\KwT+\Euc & {\mathcal S}5 \\
   \hline
   &&\SPLKwTrEuc=\SPLKw+\KwTr+\KwEuc&45(\mathcal{KD}45)\\
  \hline
\end{array}
\]

\begin{theorem}(c.f.~\cite{Fanetal:2014,Fanetal:2015})\label{thm.com-lc}
$\SPLKw$ is sound and strongly complete with respect to the class of all frames and also the class of all $\mathcal{D}$-frames, other extensions of $\SPLKw$ are sound and strongly complete with respect to the corresponding class of frames listed in the above table.
\end{theorem}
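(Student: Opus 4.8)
The plan is to separate soundness from completeness. Soundness is routine: one checks that each axiom schema is valid on the relevant frame class and that \MP, \GEN\ and \REKw\ preserve validity. The only schemas needing care are the frame-specific ones, where one verifies the correspondences ($\KwT$ to reflexivity, $\KwTr$ to transitivity, $\KwEuc$ to Euclideanness, $\KwB$ to symmetry) by unwinding the $\Delta$-semantics. That $\SPLKw$ is simultaneously complete for all frames and for $\mathcal{D}$-frames reflects a feature of $\Delta$: at any world with at most one successor, $\Delta A$ holds for every $A$, so no $\SPLKw$-theorem can distinguish serial from non-serial frames, and (see below) the canonical model can always be made serial without disturbing any $\Delta$-formula.

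The difficulty is completeness, and the obstruction is that $\Delta$ is not a normal modality: it does not distribute over implication and there is no box in the language to drive a standard canonical-model argument. The first key step is therefore to recover a surrogate necessity. Using $\KwCon$, $\KwDis$ and $\EquiKw$ (together with \GEN\ and \REKw) I would derive the \emph{almost-definability} schema
\[
\neg\Delta\psi\to\big(\Box\phi\leftrightarrow(\Delta\phi\wedge\Delta(\psi\to\phi))\big),
\]
where $\Box\phi$ abbreviates $\Delta\phi\wedge\Delta(\psi\to\phi)$ relative to a chosen \emph{contingent} witness $\psi$. Semantically this is immediate: if $\psi$ is contingent at $w$ there are both a $\psi$- and a $\neg\psi$-successor, and these force the agreement expressed by $\Delta\phi$ to be agreement on $\phi$ being \emph{true}; syntactically it must be established as a theorem.

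Next I would build a canonical model whose worlds are maximal consistent sets (MCSs), with accessibility defined through the surrogate box: for an MCS $s$ containing some contingent formula, fix a witness $\psi$ with $\neg\Delta\psi\in s$ and set $s\mathrel{R}t$ iff $\{\phi : \Delta\phi\wedge\Delta(\psi\to\phi)\in s\}\subseteq t$. One then proves, using $\KwDis$ and $\KwCon$, that surrogate-box membership is independent of the chosen witness, so $R$ is well defined. MCSs in which \emph{every} formula is non-contingent have no witness and must be handled separately — the natural choice is to let such a world access only worlds whose theory agrees with it everywhere (or to treat it as a dead end, optionally with a reflexive loop to secure seriality). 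The heart of the argument is the Truth Lemma, by induction on formulas: the Boolean cases are standard, while the $\Delta$-case uses the almost-definability schema together with an Existence Lemma (if the surrogate $\neg\Box\phi$ lies in $s$ then $s$ has an $R$-successor refuting $\phi$) to match the semantic clause for $\Delta$ against set-membership.

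The main obstacle, concretely, is precisely this $\Delta$-case of the Truth Lemma, together with the well-definedness of $R$: one must show that the surrogate box behaves like a genuine necessity across all choices of witness and that the troublesome ``everywhere non-contingent'' worlds do not break the induction. Once the canonical model validates $\SPLKw$, strong completeness for the base system follows in the usual way by extending an arbitrary consistent set to an MCS living in the model. For each extension I would then verify that the added axiom forces the corresponding property on the canonical relation — $\KwT$ yields reflexivity, $\KwTr$ transitivity, $\KwEuc$ Euclideanness, $\KwB$ symmetry, while in the presence of $\KwT$ the lighter axioms $\Tr$ and $\Euc$ suffice to force transitivity and Euclideanness for $\SPLKwTTr$ and $\SPLKwTEuc$ — after which completeness of $\SPLKwT$, $\SPLKwTr$, $\SPLKwEuc$, $\SPLKwB$, $\SPLKwTTr$, $\SPLKwTEuc$ and $\SPLKwTrEuc$ over their frame classes drops out of the same construction.
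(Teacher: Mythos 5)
First, note that the paper does not prove Theorem~\ref{thm.com-lc} at all: it imports the result from~\cite{Fanetal:2014,Fanetal:2015}, so your proposal has to be measured against the proofs there. For the base system $\SPLKw$ your sketch is essentially that proof: the ``almost-definability'' idea, a canonical relation built from the surrogate box $\Delta\phi\land\Delta(\psi\to\phi)$, an existence lemma driven by $\KwCon$/$\KwDis$, dead-end treatment of MCSs containing no contingent formula, and the loop-adding argument explaining why the same system is complete for all frames and for $\mathcal{D}$-frames. Two caveats on this part. (i) As literally stated, your almost-definability schema is either vacuous or ill-formed: if $\Box\phi$ \emph{abbreviates} $\Delta\phi\wedge\Delta(\psi\to\phi)$ with the same witness $\psi$, the biconditional is a propositional tautology, and it cannot instead be ``established as a theorem'' for a genuine $\Box$, since $\Box$ is not in $\mathcal{L}_\Delta$. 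The syntactic content that actually carries the proof is the witness-independence lemma you invoke next, i.e.\ $\vdash\neg\Delta\psi\to\bigl(\Delta\phi\land\Delta(\psi\to\phi)\to\Delta(\chi\to\phi)\bigr)$ for arbitrary $\chi$; the cited papers sidestep your well-definedness detour by building this in, defining $sRt$ iff $\{\phi:\Delta(\chi\to\phi)\in s\text{ for all }\chi\}\subseteq t$, which needs no choice of witness and handles the ``everywhere non-contingent'' MCSs automatically (there the set becomes inconsistent, so such worlds are dead ends).

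The genuine gap is your last paragraph. The extensions do \emph{not} ``drop out of the same construction'': showing that the canonical relation inherits a frame property from the added axiom is exactly where contingency logic diverges from normal modal logic, and it is the bulk of the cited papers. For $\SPLKwT$ one does not even use the witness-based relation; over reflexive frames the much simpler definability $\Box\phi\leftrightarrow(\phi\land\Delta\phi)$ is available, the canonical relation is redefined as $sRt$ iff $\{\phi:\phi\land\Delta\phi\in s\}\subseteq t$ (reflexivity is then immediate), and $\KwT$ is what makes the truth lemma go through; this is also precisely why the ``light'' axioms $\Tr$ and $\Euc$ suffice only in the presence of $\KwT$, as you correctly observe. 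For the non-reflexive classes ($4$, $5$, $\mathcal{B}$, $45$) there is no reason why your relation $R$ should be transitive, Euclidean or symmetric merely because $\KwTr$, $\KwEuc$ or $\KwB$ sits in every MCS --- and in general it is not; the actual proofs modify the canonical relation class by class (e.g.\ additionally requiring that the relevant $\Delta$-formulas or surrogate boxes be carried over from $s$ to $t$) and re-prove the truth lemma for each modification. So your plan, as stated, would stall at $\SPLKwTr$: you would find yourself unable to verify transitivity of $R$ and would have to change $R$, not just ``verify'' a property of it. That the extension axioms have their baroque shapes is a symptom of exactly this difficulty, and a complete proof must engage with it.
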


\subsection{The modal logic of agreement}\label{subsec.agreement}

Humberstone~\cite{Humberstone:2002} proposed a so-called `the modal logic of agreement', to lift the study of contingency logic on to a general modal logic. The modal logic of agreement $\mathcal{L}_O$ extends propositional logic with an operator of agreement $O$ rather than $\Delta$:
$$A::=p\mid \neg A\mid A\land A\mid OA.$$

The model $\M$, called `a generalized model' in~\cite{Humberstone:2002}, is a triple $\lr{W,S,V}$, where $W$ and $V$ are as usual, i.e., $W$ is a set of possible worlds and $V$ a valuation from $\BP$ to $\mathcal{P}(W)$, and $S$ is a ternary relation without any constraints. The agreement operator $O$ is interpreted as follows:
\[\begin{array}{lcl}
\M,w\vDash OA&\iff&\text{for all }u,v\in W\text{ such that }S_wuv, \\
&&\text{we have }(\M,u\vDash A\iff \M,v\vDash A).
\end{array}\]
Intuitively, $S_wuv$ means that $u$ and $v$ stand in the relation that is the value of $S$ for the argument $w$. When $S$ is defined such that, for any $w,u,v\in W$,  $S_wuv$ holds just in case $wRu$ and $wRv$, we obtain the interpretation of non-contingency operator $\Delta$, and thus the interpretation of $\Delta$ is a special case of that of $O$. Other special cases for the semantics of $O$ refer to~\cite[Sec.~3]{Humberstone:2002}.

In the sequel, partly inspired by the work of the modal logic of agreement, we will introduce one of its extensions, called `a logic of supervenience', in which our supervenience operator is defined on the generalized models. We will show that our new logic is more expressive than $\mathcal{L}_O$, by defining a bisimulation notion of $\mathcal{L}_O$.

\subsection{Propositional logic of determinacy}\label{sec.determinacy}

Propositional logic of determinacy $\mathcal{L}_D$ is introduced in~\cite{Gorankoetal:2016}, a logic which extends propositional logic with dependence formulas $D(A_1,\cdots,A_n;B)$, where $A_1,\cdots, A_n,B$ are all arbitrary formulas in $\mathcal{L}_D$. In symbol,
$$A::=p\mid \neg A\mid A\land A\mid D(A,\cdots,A;A)$$
Where the tuple $(A,\cdots,A;A)$ contains $n+1$ formulas for any $n\in\mathbb{N}$.

Formula $D(A_1,\cdots,A_n;B)$ is read ``$B$ depends only on $A_1,\cdots,A_n$'', intuitively meaning that the truth value of $B$ is determined by the set of truth values of $A_1,\cdots,A_n$; or roughly speaking, once the truth value of each $A_i$ ($i\in[1,n]$) are fixed, the truth value of $B$ is also fixed. The determinacy operator $D$ is interpreted in~\cite{Gorankoetal:2016} on Kripke models $\M=\lr{W,R,V}$, where $W$ and $V$ are as usual, and $R$ is the {\em universal} relation, i.e. for all $w,v\in W$, $wRv$.
\[
\begin{array}{lll}
\M,w\vDash D(A_1,\cdots,A_n;B)&\iff&\text{for all }u,v\in W\text{ such that }wRu\text{ and }wRv,\\
&& \text{if }(\M,u\vDash A_i\iff \M,v\vDash A_i)\text{ holds for all }i\leq n,\\
&&\text{then }(\M,u\vDash B\iff \M,v\vDash B).\\
\end{array}
\]

At the end of that paper~\cite[Sec.~8.2]{Gorankoetal:2016}, the semantic of $D$ was generalized into the general modal setting, without any constraints for the accessibility relation $R$. It left as an open research direction how to investigate the determinacy operator $D$ over various classes of Kripke models. In this article, we will complete this research direction, by showing that $D$ is inter-definable with the non-contingency operator $\Delta$ on all Kripke models, and we will also give axiomatizations of $\mathcal{L}_D$ over various frame classes.

\weg{\subsection{Strict implication and supervenience-determined consequence relation}

Another motivation is from an interesting contrast with the semantics of strict implication. Being unhappy with so-called `paradoxes of material implication', in his seminal work~\cite{Lewis:1918}, Lewis defined a strict implication $\prec$, with $A\prec B$ read as ``$A$ strictly implies $B$'' and interpreted by the following:
\[\begin{array}{lcl}
\M,w\vDash A\prec B&\iff& \text{for all }u\in \M\text{ such that }Rwu,\text{ if }\M,u\vDash A,\text{ then }\M,u\vDash B.
\end{array}\]
This internalizes the claim for $B$'s being a consequence, by some inference-determined consequence relation, of a formula $A$. Recall that given a class of valuations $\mathcal{V}$, the consequence relation {\em inference-determined} by $\mathcal{V}$~\cite{Humberstone:1993}, denoted $\vDash_\mathcal{V}$, defined as:
\[
\begin{array}{lcl}
\Gamma\vDash_\mathcal{V} A&\iff& \text{for all valuations }v\in \mathcal{V},\\
&&\text{if }v(B)=1\text{ for each }B\in\Gamma, \text{ then }v(A)=1.\\
\end{array}
\]
In comparison, the consequence relation {\em supervenience-determined} by $\mathcal{V}$, denoted $\Vdash_\mathcal{V}$, is defined as:
\[
\begin{array}{lcl}
\Gamma\Vdash_\mathcal{V} A&\iff& \text{for all valuations }u,v\in \mathcal{V},\\
&&\text{if }u(B)=v(B)\text{ for each }B\in\Gamma, \text{ then }u(A)=v(A).\\
\end{array}
\]
Then it is natural to define a binary operator $\vartriangleleft$ as follows:
\[\begin{array}{lcl}
\M,w\vDash A\vartriangleleft B&\iff& \text{for all }u,v\in \M\text{ such that }S_wuv,\\
&&\text{if }\M,u\vDash A\iff \M,v\vDash A,\text{ then }\M,u\vDash B\iff\M,v\vDash B.\\
\end{array}\]}


\section{Supervenience logic}\label{sec.supervenience}

In this section, we introduce our modal logic of supervenience. Before doing that, let us display an interesting contrast between the semantics of our new modality with that of strict implication.

Being unhappy with a so-called `paradoxes of material implication', in his seminal work~\cite{Lewis:1918}, Lewis defined a strict implication $\prec$, with $A\prec B$ read as ``$A$ strictly implies $B$'' and interpreted by the following:
\[\begin{array}{lcl}
\M,w\vDash A\prec B&\iff& \text{for all }u\in \M\text{ such that }Rwu,\text{ if }\M,u\vDash A,\text{ then }\M,u\vDash B.
\end{array}\]
The operator $\prec$ internalizes the claim for $B$'s being a consequence, by some inference-determined consequence relation, of a formula $A$. Recall that given a class of valuations $\mathcal{V}$, the consequence relation {\em inference-determined} by $\mathcal{V}$, denoted $\vDash_\mathcal{V}$, defined as:
\[
\begin{array}{lcl}
\Gamma\vDash_\mathcal{V} A&\iff& \text{for all valuations }v\in \mathcal{V},\\
&&\text{if }v(B)=T\text{ for each }B\in\Gamma, \text{ then }v(A)=T.\\
\end{array}
\]
In comparison, the consequence relation {\em supervenience-determined} by $\mathcal{V}$~\cite{Humberstone:1993}, denoted $\Vdash_\mathcal{V}$, is defined as:
\[
\begin{array}{lcl}
\Gamma\Vdash_\mathcal{V} A&\iff& \text{for all valuations }u,v\in \mathcal{V},\\
&&\text{if }u(B)=v(B)\text{ for each }B\in\Gamma, \text{ then }u(A)=v(A).\\
\end{array}
\]
Then it is very natural to define a binary operator $\vartriangleleft$ as follows:
\[\begin{array}{lcl}
\M,w\vDash A\vartriangleleft B&\iff& \text{for all }u,v\in \M\text{ such that }S_wuv,\\
&&\text{if }\M,u\vDash A\iff \M,v\vDash A,\\
&&\text{then }\M,u\vDash B\iff\M,v\vDash B.\\
\end{array}\]

We thus obtain our supervenience operator. The operator $\vartriangleleft$ internalizes the claim that $B$ is a consequence by some supervenience-determined consequence relation of $A$. Roughly speaking, just as strict implication is a localized object-language modality corresponding to inference-determined consequence (in the 1-premiss case), the supervenience operator is a localized object-language modality corresponding to supervenience-determined consequence (in the 1-premiss case).

\begin{definition}The language $\mathcal{L}_\vartriangleleft$ of {\em the modal logic of supervenience} is defined inductively by the following BNF:
$$A::=p\mid \neg A\mid (A\land A)\mid (A\vartriangleleft A)$$
\end{definition}
Thus $\mathcal{L}_\vartriangleleft$ is an extension of propositional logic with a new dyadic modality $\vartriangleleft$. The construct $A\vartriangleleft B$ is read ``$B$ supervenes on $A$''. Intuitively, $B$ supervenes on $A$, if (given some premise,) once the truth value of $A$ is fixed, the truth value of $B$ is also fixed. We always drop the parentheses around formulas whenever no confusion arises.

A {\em model of $\mathcal{L}_\vartriangleleft$} is a triple $\M=\lr{W,S,V}$, where $W$ is a nonempty set of worlds, $V:\BP\to\mathcal{P}(W)$ is a valuation, and $S$ a function assigning to each $w\in W$ a binary relation $S_w$ on $W$. We can equally think of $S$ as an arbitrary ternary relation on $W$. If $w\in W$, we say the pair $(\M,w)$ is a {\em pointed model}. A {\em frame of $\mathcal{L}_\vartriangleleft$} is a model of $\mathcal{L}_\vartriangleleft$ without a valuation.

Given a Kripke model $\M=\lr{W,S,V}$ and a world $w\in W$, the semantics of $\mathcal{L}_\vartriangleleft$ is defined as follows.
\[\begin{array}{|lcl|}
\hline
\M,w\vDash p &\iff& w\in V(p)\\
\M,w\vDash \neg A&\iff &\M,w\nvDash A\\
\M,w\vDash A\land B&\iff&\M,w\vDash A\text{ and }\M,w\vDash B\\
\M,w\vDash A\vartriangleleft B&\iff&\text{for all }u,v\in W\text{ such that }S_wuv,\\
&&\text{if }(\M,u\vDash A \iff\M,v\vDash A),\\
&&\text{then }(\M,u\vDash B\iff \M,v\vDash B).\\
\hline
\end{array}
\]

The notions of truth, validity and satisfiability are defined as usual. For instance, formula $A$ is {\em true} at $w$ in $\M$, if $\M,w\vDash A$, in this case we also write $w\vDash A$ when $\M$ is clear; $A$ is {\em valid on a class of frames $F$}, written $F\vDash A$, if for all frames $\mathcal{F}$ in $F$, for all models $\M$ based on $\mathcal{F}$, and for all worlds $w$ in $\mathcal{M}$, we have $\M,w\vDash A$; $A$ is {\em valid}, written $\vDash A$, if for all classes of frames $F$, we have $F\vDash A$; $A$ is {\em satisfiable}, if there is a model $\M$ and a world $w$ in $\M$ such that $\M,w\vDash A$.

\weg{\[\begin{array}{|lcl|}
\hline
\M,w\vDash \vartriangleleft(A_1,\cdots,A_n; B)&\iff&\text{for all }u,v\in\M\text{ such that }S_wuv, \text{ if }(\M,u\vDash A_i \iff\M,v\vDash A_i)\\
&&\text{ for all }i\in\{1,\cdots,n\},\text{ then }(\M,u\vDash B\iff \M,v\vDash B).\\
\hline
\end{array}
\]}

Our operator $\vartriangleleft$ conform to the core idea of supervenience in philosophy: fixing the subvenient fixes the supervenient. Note that our language is an extension of the language of the modal logic of agreement $\mathcal{L}_O$ (see Section~\ref{subsec.agreement}), since one may easily verify that $\vDash OB\lra (\top\vartriangleleft B)$.\footnote{We could equally well use $\bot$ in place of $\top$ here.} In what follows, for simplicity's sake, we will use $OB$ to abbreviate $\top\vartriangleleft B$, and $A\Lleftarrow\Rrightarrow B$ to abbreviate $(A\Rrightarrow B) \land (B\Rrightarrow A)$. Intuitively, $OB$ says that the truth value of $B$ is fixed, and $A\Lleftarrow\Rrightarrow B$ says that $A$ and $B$ supervene on each other.

\weg{The semantics of $\vartriangleleft$ reminds us of that of strict implication $\prec$ defined as:
\[\begin{array}{lcl}
\M,w\vDash A\prec B&\iff \text{for all }u\in \M\text{ such that }Rwu,\text{ if }\M,u\vDash A,\text{ then }\M,u\vDash B.
\end{array}\]
Roughly speaking, just as strict implication is a localized object-language connective corresponding to inference-determined consequence (in the 1-premiss case), so the new connective is a localized object-language connective corresponding to supervenience-determined consequence (in the 1-premiss case).}

The operator $\vartriangleleft$ is not definable/expressible in terms of $\prec$. Take two models $\M$ and $\M'$ as an example, where in $\M$, $S_wuv$ with $p,q$ are only true at $w$, while in $\M'$, $S'_{w'}u'v'$ with $p$ is true at all worlds, and $q$ is true only at $w'$ and $u'$, and accessibility relations are empty on both models. One may check that $(\M,w)$ and $(\M',w')$ can be distinguished by $p\vartriangleleft q$, but cannot be distinguished by any formula of the language which extends propositional logic with the operator $\prec$.

As discussed in~\cite[Subsection~3.34]{Humberstone:connectivebook} (also see~\cite[pp.~188--189]{Humberstone13:logicalrelations} for a summary), every binary connective gives rise to binary relations between formulas in two ways: a local way and a global way. The same story goes with our binary connective/modality $\vartriangleleft$: given a model $\M$ and a world $w$ therein, we can define a (local) relation $R^\vartriangleleft_{\M,w}$ and a (global) relation $R^\vartriangleleft_\M$, respectively, as $\{(A,B)\mid M,w\vDash A\vartriangleleft B\}$ and $\{(A,B)\mid M\vDash A\vartriangleleft B\}$ (or equivalently, $\bigcap_{w\in\M} R^\vartriangleleft_{\M,w}$). Then these two relations are both preorder, that is, reflexive and transitive relations, as easily follows from Prop.~\ref{prop.preorder}, where item $(i)$ says that everything is supervenient on itself, and item $(ii)$ says that if one thing is supervenient on a thing which supervenes on another thing, then the first thing supervenes on the third thing. In short, the operator $\vartriangleleft$ gives rise to two kinds of supervenience relations between formulas.



\begin{proposition}\label{prop.preorder}
Let $A,B,C\in L_\vartriangleleft$.
\begin{enumerate}
\item[(i)]\label{i} $\vDash A\vartriangleleft A$ (Supervenience is reflexive),
\item[(ii)]\label{ii} $\vDash(A\vartriangleleft B)\land(B\vartriangleleft C)\to (A\vartriangleleft C)$ (Supervenience is transitive).
\item[(iii)] $\nvDash(A\vartriangleleft B)\to (B\vartriangleleft A)$ (Supervenience is {\em not} symmetric).
\end{enumerate}
\end{proposition}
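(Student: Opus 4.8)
The plan is to handle the three items separately: (i) and (ii) are validities, so I would verify them by unfolding the truth condition for $\vartriangleleft$ at an arbitrary pointed model $(\M,w)$ with $\M=\lr{W,S,V}$; (iii) is a non-validity, so I would exhibit a single pointed model refuting the implication. The key observation throughout is that the clause for $A\vartriangleleft B$ asserts, for every pair $u,v$ with $S_wuv$, the conditional ``agreement on $A$ implies agreement on $B$'', and it is this conditional that I manipulate.

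For (i), fix $w$ and an arbitrary pair $u,v$ with $S_wuv$; the required conditional is ``if $u,v$ agree on $A$ then $u,v$ agree on $A$'', which holds trivially. Since $u,v$ were arbitrary we get $\M,w\vDash A\vartriangleleft A$, and since $(\M,w)$ was arbitrary we get $\vDash A\vartriangleleft A$. For (ii), I would assume $\M,w\vDash A\vartriangleleft B$ and $\M,w\vDash B\vartriangleleft C$, then take any $u,v$ with $S_wuv$ on which $u,v$ agree on $A$. The first hypothesis yields agreement on $B$, and feeding this into the second hypothesis yields agreement on $C$; as $u,v$ were arbitrary this gives $\M,w\vDash A\vartriangleleft C$. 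This is nothing more than chaining two agreement-propagation conditions along the same related pair, so no idea beyond transitivity of implication is involved.

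For (iii), I would take $A=p$ and $B=\top$ and build a model in which the directional asymmetry of the clause becomes visible. Let $W=\{w,u,v\}$, let $S_w$ relate $u$ and $v$ (i.e.\ $S_wuv$, with all other values of $S$ empty), and set $V(p)=\{u\}$, so that $u,v$ disagree on $p$ while agreeing on $\top$. Then $\M,w\vDash p\vartriangleleft\top$ holds because the consequent ``agreement on $\top$'' is automatic for every pair, whereas $\M,w\vDash\top\vartriangleleft p$ (that is, $Op$) fails, witnessed by the pair $u,v$, which agree on $\top$ but disagree on $p$. Hence $\M,w\nvDash(p\vartriangleleft\top)\to(\top\vartriangleleft p)$, so the schema $(A\vartriangleleft B)\to(B\vartriangleleft A)$ is refuted.

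I do not expect a genuine obstacle, since all three parts reduce to direct semantic bookkeeping. The only point needing a little care is the choice of witnessing formulas in (iii): the counterexample must exploit that the semantic clause is not symmetric in its two arguments — agreement on the subvenient formula is demanded as a hypothesis while agreement on the supervenient one is the conclusion — so picking $A$ to be $p$ and $B$ to be a formula everyone agrees on (here $\top$) is precisely what makes one direction vacuously true and the other refutable.
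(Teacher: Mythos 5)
Your proposal is correct: the paper states Proposition~\ref{prop.preorder} without giving a proof, treating it as routine semantic verification, and your argument --- trivial unfolding of the truth condition for (i), chaining the two agreement-propagation conditionals along the same $S_w$-related pair for (ii), and the countermodel with $A=p$, $B=\top$ exploiting the asymmetry of the clause for (iii) --- is exactly the intended justification. In particular, you correctly read (iii) as the non-validity of the schema (witnessed by a suitable instance and pointed model) rather than a claim about all instances, which is the only point where a careless reading could go wrong.
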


Intuitively, if the truth value of $B$ is fixed, then the truth value of $B$ is still fixed, no matter whether the concerned subvenient is fixed.
\begin{fact}\label{fact.O}
$\vDash OB\to A\vartriangleleft B$.
\end{fact}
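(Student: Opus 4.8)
The plan is to verify this validity directly from the semantics, since the underlying idea is simply that if $OB$ already fixes the truth value of $B$ outright, then $B$ trivially supervenes on any $A$. First I would fix an arbitrary pointed model $(\M,w)$ and assume $\M,w\vDash OB$, recalling that $OB$ abbreviates $\top\vartriangleleft B$. Unfolding the semantics of $\vartriangleleft$ applied to $\top\vartriangleleft B$, the subvenient condition $(\M,u\vDash\top\iff\M,v\vDash\top)$ is satisfied for every pair $u,v$, because $\top$ is true at every world; hence $\M,w\vDash OB$ collapses to the \emph{unconditional} statement that for all $u,v\in W$ with $S_wuv$ we have $(\M,u\vDash B\iff\M,v\vDash B)$.

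Next I would check $\M,w\vDash A\vartriangleleft B$ against its semantic clause. Take any $u,v\in W$ with $S_wuv$ and suppose $(\M,u\vDash A\iff\M,v\vDash A)$; the goal is to derive $(\M,u\vDash B\iff\M,v\vDash B)$. But this is exactly what the collapsed form of $OB$ delivers for the pair $(u,v)$, so the required implication holds; note that the agreement hypothesis on $A$ is never actually used. Since $u,v$ were arbitrary $S_w$-related worlds, $\M,w\vDash A\vartriangleleft B$, and since $(\M,w)$ was an arbitrary pointed model, $\vDash OB\to A\vartriangleleft B$.

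There is essentially no obstacle here: the statement is a one-line semantic verification. The only point worth flagging is the role of $\top$ — it is precisely what makes the subvenient condition in the $O$-clause vacuously met, turning ``$B$ supervenes on a fixed/trivial antecedent'' into ``the truth value of $B$ is fixed.'' Once that observation is recorded, the fixedness of $B$ across every $S_w$-related pair dominates any constraint one might impose through $A$, matching the informal reading stated just before the claim.
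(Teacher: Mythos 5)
Your proof is correct and matches the paper's approach: the paper simply states that the fact is ``straightforward by the semantical definitions of $O$ and $\vartriangleleft$,'' and your argument is exactly that semantic verification spelled out, noting that $OB$ (i.e.\ $\top\vartriangleleft B$) already forces agreement on $B$ across every $S_w$-related pair, so the antecedent condition on $A$ is never needed.
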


\begin{proof}
Straightforward by the semantical definitions of $O$ and $\vartriangleleft$.
\end{proof}

If the truth value of $A$ is fixed, then ``$B$ supervenes on $A$'' is amount to that the truth value of $B$ is fixed.
\begin{proposition}\label{prop.equiv}
$\vDash OA\to(A\vartriangleleft B\lra OB)$.
\end{proposition}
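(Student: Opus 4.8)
The plan is to fix an arbitrary pointed model $(\M,w)$ with $\M=\lr{W,S,V}$, assume $\M,w\vDash OA$, and then establish the biconditional $\M,w\vDash A\vartriangleleft B$ iff $\M,w\vDash OB$ by treating the two directions separately. The one preliminary step I would carry out first is to record what the hypothesis $OA$ actually buys us. Since $OA$ abbreviates $\top\vartriangleleft B$ with $\top$ in place of $A$, unwinding the semantics of $\vartriangleleft$ and using that $\M,u\vDash\top\iff\M,v\vDash\top$ holds vacuously for every pair, $\M,w\vDash OA$ says precisely that for all $u,v\in W$ with $S_wuv$ we have $(\M,u\vDash A\iff\M,v\vDash A)$; in words, $A$ takes a constant truth value across every $S_w$-related pair.

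For the right-to-left direction, $OB\to A\vartriangleleft B$, no use of the hypothesis is needed at all: this is exactly Fact~\ref{fact.O}, whose validity was already established (intuitively, if the truth value of $B$ is fixed across the $S_w$-pairs, then $A\vartriangleleft B$ holds trivially, regardless of the antecedent clause). For the left-to-right direction, $A\vartriangleleft B\to OB$, the hypothesis $OA$ is what does the work. Assuming $\M,w\vDash A\vartriangleleft B$, I would take any $u,v$ with $S_wuv$; by the reading of $OA$ recorded above we have $(\M,u\vDash A\iff\M,v\vDash A)$, so the antecedent clause in the semantics of $A\vartriangleleft B$ is satisfied for this very pair, and hence the definition of $\vartriangleleft$ yields $(\M,u\vDash B\iff\M,v\vDash B)$. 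As $u,v$ were arbitrary $S_w$-related worlds, this is exactly $\M,w\vDash OB$.

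There is essentially no combinatorial or analytic difficulty here; the proof is a direct unfolding of the semantics. The only point requiring care is the bookkeeping of the nested biconditional-inside-implication: I expect the genuinely informative observation to be that only the left-to-right direction consumes the hypothesis $OA$, the converse being the unconditional Fact~\ref{fact.O}. The mechanism is simply that $OA$ makes the antecedent ``$\M,u\vDash A\iff\M,v\vDash A$'' automatically true on every $S_w$-pair, so that ``$B$ supervenes on $A$'' degenerates to ``the truth value of $B$ is fixed'', i.e.\ $OB$.
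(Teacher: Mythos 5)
Your proof is correct and follows essentially the same route as the paper's: both directions are handled identically, with $OB\to A\vartriangleleft B$ delegated to Fact~\ref{fact.O} and the direction $A\vartriangleleft B\to OB$ obtained by using $OA$ to discharge the antecedent clause of the semantics of $\vartriangleleft$ on every $S_w$-pair (the paper phrases this direction as a proof by contradiction, yours is the direct contrapositive-free version of the very same argument). The only nitpick is the garbled phrase ``$OA$ abbreviates $\top\vartriangleleft B$ with $\top$ in place of $A$'' --- it should read that $OA$ abbreviates $\top\vartriangleleft A$ --- but your subsequent unfolding makes clear this is a slip of wording, not of substance.
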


\begin{proof}
Let $\M=\lr{W,S,V}$ be a model and $w\in W$ such that $\M,w\vDash OA$. By Fact~\ref{fact.O}, we have that $\M,w\vDash OB\to A\vartriangleleft B$. Left to show is the other direction. To this end, assume, for a contradiction, that $\M,w\vDash A\vartriangleleft B$ and $\M,w\nvDash OB$. Then there exist $u,v\in W$ such that $S_wuv$, and it is {\em not} the case that $(\M,u\vDash B\iff \M,v\vDash B)$. By the fact that $\M,w\vDash OA$ and $S_wuv$, we obtain that $(\M,u\vDash A\iff \M,v\vDash A)$. Thus we have found two worlds $u,v\in W$ with $S_wuv$ and $(\M,u\vDash A\iff \M,v\vDash A)$ but {\em not} $(\M,u\vDash B\iff \M,v\vDash B)$, which is contrary to the assumption.
\end{proof}

\section{Comparing the expressive powers of $\mathcal{L}_\vartriangleleft$ and $L_O$}\label{sec.exp-ls-lo}

In this section, we compare the relative expressive powers of $\mathcal{L}_\vartriangleleft$ and $\mathcal{L}_O$. The prime result is that, $\mathcal{L}_\vartriangleleft$ is more expressive than $\mathcal{L}_O$. We first introduce the definition of expressivity.
\begin{definition}[Expressivity]
Let two logical languages $L1$ and $L2$ be interpreted in the same class $M$ of models.
\begin{itemize}
\item $L2$ is {\em at least as expressive as} $L1$, if for every formula $A$ in $L1$, there is a formula $B$ in $L2$ such that for all $\M$ in $M$ and $w$ in $\M$, we have $\M,w\vDash A$ iff $\M,w\vDash B$.
\item $L2$ is {\em more expressive than} $L1$, or $L1$ is less expressive than $L2$, if $L2$ is at least as expressive as $L1$ but not vice versa.
\item $L2$ is {\em equally expressive as} $L1$, if $L2$ is at least as expressive as $L1$ and vice versa.
\end{itemize}
\end{definition}

As noted above, $OB$ is definable in terms of $\vartriangleleft$, as $OB=_{df}\top\vartriangleleft B$, thus $\mathcal{L}_\vartriangleleft$ is at least as expressive as $\mathcal{L}_O$. To show that $\mathcal{L}_\vartriangleleft$ is more expressive than $\mathcal{L}_O$, we only need to show that $\mathcal{L}_O$ is {\em not} at least as expressive as $L_\vartriangleleft$. Observe that even the simple formula $p\vartriangleleft q$ in $\mathcal{L}_\vartriangleleft$ seems to be not definable with $\mathcal{L}_O$ formulas. Thus we only need to construct two models that are distinguishable by $p\vartriangleleft q$ but not by any $\mathcal{L}_O$ formulas. Before doing this, let us illustrate the non-triviality of the construction with some discussions.

Let two pointed models $(\M,w)$ and $(\M',w')$ be given. Firstly, consider the case where $Op$ holds at both $w$ and $w'$. In this case, from Prop.~\ref{prop.equiv}, it follows that $p\vartriangleleft q\lra Oq$ are both true at $w$ and $w'$. Thus although $Op$ cannot distinguish $(\M,w)$ and $(\M',w')$, $p\vartriangleleft q$ cannot distinguish either, because otherwise $Oq$ can distinguish the two pointed models too. This is not consistent with our goal, and hence $Op$ should be false at both $w$ and $w'$. Secondly, consider the case where $Oq$ holds at both $w$ and $w'$. In this case, by Fact~\ref{fact.O}, we have that $p\vartriangleleft q$ holds at $w$ and $w'$ as well, thus $p\vartriangleleft q$ cannot distinguish the two pointed models. This is contrary to our goal too, and hence $Oq$ should be also false at both $w$ and $w'$. All in all, to construct two models that are distinguishable by $p\vartriangleleft q$ but not by any $\mathcal{L}_O$ formulas, we need to construct two models where $Op$ and $Oq$ are both false at the designated worlds.

Before constructing the desired models, we define a notion of bisimulation for $\mathcal{L}_O$, which we call `$O$-bisimulation'.\footnote{The usage `$O$-bisimulation' may be a bit loose, since we are unsure whether the Hennessy-Milner-style theorem holds for it. But this does not affect the results below.}

\begin{definition}[$O$-Bisimulation] Let $\M=\lr{W,S,V}$ and $\M'=\lr{W',S',V'}$ be models. Say $\mathcal{R}\subseteq W\times W'$ is an {\em $O$-bisimulation} between $\M$ and $\M'$, if $\mathcal{R}$ is nonempty, and if $w\mathcal{R}w'$, then the following conditions are satisfied:
\begin{enumerate}
\item[(Atom)] $w$ and $w'$ satisfy the same propositional variables.
\item[($O$-Zig)] if for all $u,v,x\in W$ such that $S_wuv$ and $S_wux$, then there exist $u',v',x'\in W'$ such that $S'_{w'}u'v'$ and $S'_{w'}u'x'$, and there are $y',z_1',z_2'\in\{u',v',x'\}$ such that $u\mathcal{R}y'$, $v\mathcal{R}z'_1$ and $x\mathcal{R}z_2'$.
\item[($O$-Zag)] if for all $u',v',x'\in W'$ such that $S_{w'}u'v'$ and $S_{w'}u'x'$, then there exist $u,v,x\in W$ such that $S_{w}uv$ and $S_{w}ux$, and there are $y,z_1,z_2\in\{u,v,x\}$ such that $y\mathcal{R}u'$, $z_1\mathcal{R}v'$ and $z_2\mathcal{R}x'$.
\end{enumerate}
We say that $(\M,w)$ and $(\M',w')$ are {\em $O$-bisimilar}, written $(\M,w)\bis_O(\M',w')$, if there is an $O$-bisimulation between $\M$ and $\M'$ that contains $(w,w')$.
\end{definition}

It is instructive to give some explanations for the conditions ($O$-Zig) and ($O$-Zag). Intuitively, ($O$-Zig) says that if $w\mathcal{R}w'$ and $u$ has two (not necessarily different) successors $v$ and $x$ with respect to the accessibility relation $S_w$, then there exists $u'$ in $\M'$ such that $u'$ has two (not necessarily different) successors $v'$ and $x'$ with respect to the accessibility relation $S'_{w'}$, and each of $u,v,x$ is $\mathcal{R}$-related to {\em at least one of} $u',v',x'$ (possibly not in order). The intuitive meaning of ($O$-Zag) is similar.

It is easy to see that $\bis_O$ is the largest $O$-bisimulation and an equivalence relation. The proposition below states that, $\mathcal{L}_O$ is not able to tell apart any two $O$-bisimilar pointed models. That is, any formula in $\mathcal{L}_O$ is invariant under $O$-bisimulation.

\begin{proposition}\label{prop.invariance}
Where $\M=\lr{W,S,V}$ and $\M'=\lr{W',S',V'}$ are models such that $w\in W$ and $w'\in W'$, if $(\M,w)\bis_O(\M',w')$, then $(\M,w)\equiv_O(\M',w')$.
\end{proposition}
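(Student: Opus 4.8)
The plan is to prove the stronger statement that the given $O$-bisimulation $\mathcal{R}$ itself preserves $\mathcal{L}_O$-truth: for every formula $A\in\mathcal{L}_O$ and every pair $s\mathcal{R}s'$ (with $s\in W$ and $s'\in W'$), we have $\M,s\vDash A\iff\M',s'\vDash A$. The desired conclusion $(\M,w)\equiv_O(\M',w')$ then follows by instantiating at the pair $(w,w')$, which lies in $\mathcal{R}$ by the hypothesis $(\M,w)\bis_O(\M',w')$. I would argue by induction on the structure of $A$. The atomic case is immediate from (Atom), and the Boolean cases ($\neg$ and $\land$) are routine, since they do not touch the accessibility structure and the inductive hypothesis applies to the very same pair $s\mathcal{R}s'$.

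The only substantive case is $A=OB$, where the inductive hypothesis is that $\M,t\vDash B\iff\M',t'\vDash B$ for every $t\mathcal{R}t'$. I would establish the two implications separately, proving $\M,s\vDash OB\Rightarrow\M',s'\vDash OB$ from ($O$-Zag) and its converse from ($O$-Zig); since the two are symmetric, I describe only the first. Suppose $\M,s\vDash OB$ and let $u',v'\in W'$ be arbitrary with $S'_{s'}u'v'$; I must show $\M',u'\vDash B\iff\M',v'\vDash B$. I would apply ($O$-Zag) to the degenerate triple obtained by setting $x':=v'$, so that $S'_{s'}u'v'$ and $S'_{s'}u'x'$ both hold, yielding worlds $u,v,x\in W$ with $S_suv$ and $S_sux$ together with witnesses $y,z_1,z_2\in\{u,v,x\}$ such that $y\mathcal{R}u'$, $z_1\mathcal{R}v'$ and $z_2\mathcal{R}v'$.

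The crux, and the step I expect to be the main obstacle, is that ($O$-Zag) matches each of $u',v',x'$ to \emph{some} element of $\{u,v,x\}$ but not in a prescribed order, so I cannot simply read off a designated partner for $u'$ and for $v'$. What rescues the argument is the common-source shape $S_suv\wedge S_sux$ of the pulled-back triple: since $\M,s\vDash OB$, the pair $S_suv$ forces $\M,u\vDash B\iff\M,v\vDash B$ and the pair $S_sux$ forces $\M,u\vDash B\iff\M,x\vDash B$, so all three of $u,v,x$ carry the same truth value for $B$. Consequently, whichever elements of $\{u,v,x\}$ the witnesses $y$ and $z_1$ turn out to be, they agree on $B$, and the inductive hypothesis transfers this agreement to $\M',u'\vDash B\iff\M',v'\vDash B$ via $y\mathcal{R}u'$ and $z_1\mathcal{R}v'$. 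This is precisely why the bisimulation must be formulated with three worlds sharing a source rather than with ordered pairs. Running the symmetric argument with ($O$-Zig) yields the reverse implication and completes the induction.
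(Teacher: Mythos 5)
Your proof is correct, but it takes a genuinely different route from the paper's. The paper argues contrapositively: assuming $\M,w\nvDash OA$, it extracts a disagreeing pair $u,v$ with $S_wuv$, pushes it forward into $\M'$ via ($O$-Zig) (splitting into cases according to whether some $x\neq v$ with $S_wux$ exists), and then performs a subcase analysis on which of $u',v',x'$ the unordered witness $y'$ equals, in each subcase locating an $S'_{w'}$-related pair of $\M'$-worlds that disagree on $A$. You instead prove truth-preservation directly: given $\M,s\vDash OB$ and an arbitrary pair $S'_{s'}u'v'$, you pull it back via ($O$-Zag) applied to the degenerate triple $x'=v'$, and then exploit the decisive observation that the pulled-back triple has a common source ($S_suv$ and $S_sux$), so the hypothesis $\M,s\vDash OB$ forces \emph{all three} of $u,v,x$ to agree on $B$ --- whence it is irrelevant which elements of $\{u,v,x\}$ the unordered witnesses $y,z_1$ happen to be, and the inductive hypothesis transfers the agreement to $u',v'$. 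This ``all three agree'' trick eliminates precisely the case analysis that occupies most of the paper's proof; the paper needs that analysis because, starting from a \emph{disagreeing} pair, it cannot collapse the witnesses and must instead track, case by case, where the disagreement lands in $\M'$ (using the common-source shape on the target side). Both proofs use the same two structural resources --- degenerate triples and the unordered-witness clauses --- but your direct formulation is shorter and makes the role of the common source transparent; the paper's contrapositive formulation has the mild virtue of exhibiting an explicit falsifying pair in $\M'$.
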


\begin{proof}
Suppose that $(\M,w)\bis_O(\M',w')$. We show by induction that  for all $A\in \mathcal{L}_O$, we have $$\M,w\vDash A\iff \M',w'\vDash A.$$
The Boolean cases are trivial. We need only show the case for $OA$.

Assume that $\M,w\nvDash OA$. Then there are $u,v\in W$ such that $S_wuv$ and $\M,u\vDash A$ and $\M,v\nvDash A$. We consider two cases.
\begin{itemize}
\item There is no $x$ such that $x\neq v$ and $S_wux$. By (O-Zig), there are $u',v',x'\in W'$ such that $S'_{w'}u'v'$ and $S'_{w'}u'x'$, and there are  $y',z_1',z_2'\in\{u',v',x'\}$ such that $u\bis_Oy'$ and $v\bis_Oz_1'$ and $v\bis_Oz_2'$. We now consider the following subcases.
    \begin{itemize}
    \item $y'=u'$. Since $u\bis_Oy'$, by induction hypothesis and $\M,u\vDash A$, we have $\M',u'\vDash A$. Similarly, from $v\bis_Oz_1'$ it follows that $z_1'\nvDash A$. Then $z_1'\neq u'$, and thus $z_1'=v'$ or $z_1'=x'$. Hence $v'\nvDash A$ or $x'\nvDash A$, either of which entails $\M',w'\nvDash OA$.
    \item $y'=v'$. Since $u\bis_Oy'$, by induction hypothesis and $\M,u\vDash A$, we have $\M',v'\vDash A$. By a similar argument to the first subcase, from $v\bis_Oz_1'$ we can obtain that $z_1'\neq v'$, which means that $z_1'=u'$ or $z_1'=x'$. If $z_1'=u'$, then $u'\nvDash A$, and therefore $\M',w'\nvDash OA$. If $z_1'=x'$, then $x'\nvDash A$, due to $S'_{w'}u'v'$ and $S'_{w'}u'x'$ and $v'\vDash A$, no matter whether $u'\vDash A$ or $u'\nvDash A$, we both have $\M',w'\nvDash OA$.
    \item $y'=x'$. Analogous to the second subcase, we get that $\M',w'\nvDash OA$.
    \end{itemize}
    Either subcase implies that $\M',w'\nvDash OA$.
    \weg{\begin{itemize}
    \item $y'=u'$, $z_1'=v'$ and $z_2'=x'$. Since $u\bis_Oy'$, by induction hypothesis we have $u'\vDash A$; similarly, as $v\bis_Oz_1'$, we infer $v'\nvDash A$. Thus $\M',w'\nvDash OA$.
    \item $y'=u'$, $z_1'=x'$ and $z_2'=v'$. Similar to the first subcase, we can obtain that $\M',w'\nvDash OA$.
    \item $y'=v'$, $z_1'=u'$ and $z_2'=x'$. Since $u\bis_Oy'$, by induction hypothesis we have $v'\vDash A$; similarly, as $v\bis_Oz_1'$, we infer $u'\nvDash A$. Thus $\M',w'\nvDash OA$.
    \item $y'=v'$, $z_1'=x'$ and $z_2'=u'$. Similar to the third subcase, we can derive that $\M',w'\nvDash OA$.
    \item $y'=x'$, $z_1'=u'$ and $z_2'=v'$. Since $u\bis_Oy'$, by induction hypothesis we have $x'\vDash A$; similarly, as $v\bis_Oz_1'$, we infer $u'\nvDash A$. Thus $\M',w'\nvDash OA$.
    \item $y'=x'$, $z_1'=v'$ and $z_2'=u'$. Similar to the fifth subcase, we can derive that $\M',w'\nvDash OA$.
    \end{itemize}
    Either subcase implies that $\M',w'\nvDash OA$.}
\item There is an $x$ such that $x\neq v$ and $S_wux$. By (O-Zig), there are $u',v',x'\in W'$ such that $S'_{w'}u'v'$ and $S'_{w'}u'x'$, and there are  $y',z_1',z_2'\in\{u',v',x'\}$ such that $u\bis_Oy', v\bis_Oz_1'$ and $x\bis_Oz_2'$. Analogous to the first case, we need to check three subcases. \begin{itemize}
    \item $y'=u'$. Since $u\bis_Oy'$, by induction hypothesis and $\M,u\vDash A$, we have $\M',u'\vDash A$. Since $v\bis_Oz_1'$ and $\M,v\nvDash A$, using induction hypothesis, we infer $\M',z_1'\nvDash A$. Thus $z_1'\neq u'$, i.e. $z_1'=v'$ or $z_1'=x'$, then $v'\nvDash A$ or $x'\nvDash A$, either of which implies $\M',w'\nvDash OA$.
    \item $y'=v'$. Since $u\bis_Oy'$, by induction hypothesis and $\M,u\vDash A$, we have $\M',v'\vDash A$. By a similar argument to the first subcase, from $v\bis_Oz_1'$ we can obtain that $z_1'\neq v'$, which means that $z_1'=u'$ or $z_1'=x'$. If $z_1'=u'$, then $u'\nvDash A$, and therefore $\M',w'\nvDash OA$. If $z_1'=x'$, then $x'\nvDash A$, due to $S'_{w'}u'v'$ and $S'_{w'}u'x'$ and $v'\vDash A$, no matter whether $u'\vDash A$ or $u'\nvDash A$, we both have $\M',w'\nvDash OA$.
    \item $y'=x'$. Analogous to the second subcase, we get that $\M',w'\nvDash OA$.
    \end{itemize}
      Again, either subcase implies that $\M',w'\nvDash OA$.
 \weg{We consider only two subcases, others are left as an exercise.
    \begin{itemize}
    \item $y'=v'$, $z_1'=x'$ and $z_2'=u'$. Since $u\bis_Oy'$, by induction hypothesis we have $v'\vDash A$; similarly, as $v\bis_Oz_1'$, we infer that $x'\nvDash A$. If $u'\vDash A$, from $x'\nvDash A$ it follows that $\M',w'\nvDash OA$; if $u'\nvDash A$, from $v'\vDash A$ we also obtain $\M',w'\nvDash OA$.
     \item $y'=x'$, $z_1'=v'$ and $z_2'=u'$. Since $u\bis_Oy'$, by induction hypothesis we have $x'\vDash A$; similarly, as $v\bis_Oz_1'$, we derive $v'\nvDash A$. If $u'\vDash A$, from $v'\nvDash A$ it follows that $\M',w'\nvDash OA$; if $u'\nvDash A$, from $x'\vDash A$ we also obtain $\M',w'\nvDash OA$.
    \end{itemize}
      Again, either subcase implies that $\M',w'\nvDash OA$.}
\end{itemize}
In either case, we conclude that $\M',w'\nvDash OA$. The converse is similar.
\end{proof}

\begin{proposition}\label{prop.moreexp}
$\mathcal{L}_\vartriangleleft$ is more expressive than $\mathcal{L}_O$.
\end{proposition}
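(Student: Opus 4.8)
The plan is to treat the two halves of ``more expressive'' separately. The easy half, that $\mathcal{L}_\vartriangleleft$ is at least as expressive as $\mathcal{L}_O$, is already available, since $OB$ is definable as $\top\vartriangleleft B$. Hence the real content is the converse failure: $\mathcal{L}_O$ is \emph{not} at least as expressive as $\mathcal{L}_\vartriangleleft$. By the definition of expressivity together with Proposition~\ref{prop.invariance}, it suffices to produce two pointed models $(\M,w)$ and $(\M',w')$ that are $O$-bisimilar --- and therefore $O$-equivalent, so that no $\mathcal{L}_O$-formula separates them --- yet are told apart by the single $\mathcal{L}_\vartriangleleft$-formula $p\vartriangleleft q$. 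The discussion preceding Proposition~\ref{prop.invariance} already fixes the regime we must work in: $Op$ and $Oq$ have to be false at both $w$ and $w'$, for otherwise $p\vartriangleleft q$ would either collapse to $Oq$ (by Proposition~\ref{prop.equiv}) or be forced true (by Fact~\ref{fact.O}), and so could never do the separating.

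I would use ``flat'' models, with $S$ empty at every world other than the evaluation point, so that nested modalities evaluated at successors are trivial. Let $\M=\lr{W,S,V}$ with $W=\{w,a,b,c\}$, where $V$ makes $p$ true only at $b$ and $q$ true only at $b$ and $c$ (so $w$ and $a$ falsify both, $b$ verifies both, and $c$ verifies only $q$), and let $S_w=\{(b,a),(b,c)\}$. Let $\M'=\lr{W',S',V'}$ with $W'=\{w',a',b',c'\}$ and the analogous valuation, but with $S'_{w'}=\{(a',c'),(a',b')\}$. In both models $Op$ and $Oq$ fail at the point, as the regime demands.

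Next I would check that $p\vartriangleleft q$ separates the two. In $\M$ every pair in $S_w$, namely $(b,a)$ and $(b,c)$, disagrees on $p$, so the supervenience condition is vacuously met and $\M,w\vDash p\vartriangleleft q$. In $\M'$ the pair $(a',c')\in S'_{w'}$ agrees on $p$ (both falsify it) but disagrees on $q$, so $\M',w'\nvDash p\vartriangleleft q$. The design idea is that the offending ``agree on $p$, disagree on $q$'' pair occurs in $\M'$ as an actual source--successor pair, whereas in $\M$ the very same two patterns (the one of $a$ and the one of $c$) occur only as the two successors of the common source $b$; consequently the pairs that $S_w$ actually contains both disagree on $p$.

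The only substantial step, and the one I expect to be the main obstacle, is verifying $O$-bisimilarity. I would take $\mathcal{R}$ to relate worlds satisfying the same propositional variables --- here the matching $(w,w'),(a,a'),(b,b'),(c,c')$ --- so that (Atom) is immediate, and (O-Zig)/(O-Zag) are vacuous at every related pair except $(w,w')$ because $S$ is empty off the points. At $(w,w')$ the key observation is that the cherry-based clauses only require the \emph{set} of propositional patterns occurring in a ``source with two successors'' triple on one side to be reproduced by such a triple on the other side, in any order and with repetitions allowed. Both models realize the same maximal such set, that of the three patterns carried by $b,a,c$: in $\M$ it comes from the triple $(b;a,c)$ and in $\M'$ from $(a';c',b')$. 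Since every other admissible triple on each side carries a subset of this pattern set, both (O-Zig) and (O-Zag) are discharged by matching against these maximal triples, so $\mathcal{R}$ is an $O$-bisimulation containing $(w,w')$. The delicate point throughout is reconciling this pattern-set matching with the earlier constraints ($Op,Oq$ false on both sides, $p\vartriangleleft q$ true on one and false on the other); the resolution is exactly to let the critical pair surface as a genuine edge of $S'_{w'}$ while appearing in $\M$ only as two sibling successors of one source. Given the bisimulation, Proposition~\ref{prop.invariance} yields $(\M,w)\equiv_O(\M',w')$, and as $p\vartriangleleft q$ distinguishes them, $\mathcal{L}_O$ cannot match $\mathcal{L}_\vartriangleleft$; with the easy half this gives Proposition~\ref{prop.moreexp}.
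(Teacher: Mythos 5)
Your proof is correct and follows essentially the same route as the paper: exhibit two four-world ``cherry'' models that are $O$-bisimilar (hence $\mathcal{L}_O$-equivalent by Proposition~\ref{prop.invariance}) yet distinguished by $p\vartriangleleft q$, the easy half being the definability $OB=_{df}\top\vartriangleleft B$. The only difference is cosmetic: the paper keeps the relational skeleton fixed in both models and permutes the valuation (so its bisimulation matches worlds non-identically, e.g.\ $t\mapsto v'$), whereas you keep the two valuations identical and instead move which world is the source of the cherry, making your bisimulation the pattern-identity matching; your verification of (Atom), ($O$-Zig), ($O$-Zag) and of the truth values of $p\vartriangleleft q$ at the two points all check out.
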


\begin{proof}
Consider the models $\M=\lr{W,S,V}$ and $\M'=\lr{W',S',V'}$, where
\begin{itemize}
\item $W=\{s,t,u,v\}$, $S_s=\{(t,u),(t,v)\}$, $S_t=S_u=S_v=\emptyset$, $V(p)=\{s,t\}$, $V(q)=\{s,v\}$;
\item $W'=\{s',t',u',v'\}$, $S'_{s'}=\{(t',u'),(t',v')\}$, $S'_{t'}=S'_{u'}=S'_{v'}=\emptyset$, $V'(p)=\{s',v'\}$, $V'(q)=\{s',t'\}$.
\end{itemize}

The two models are visualized below:
\begin{center}
\begin{tikzpicture}
\node (0) at (0,0) {$s_{p,q}$};
\node (1) at (2,0) {$t_{p,\neg q}$};
\node (2) at (3,2){$u_{\neg p,\neg q}$};
\node (3) at (3,-2){$v_{\neg p,q}$};
\node (4) at (1.5,-2.5) {$\M$};
\draw[-] (0) to (1);
\draw[-] (0) to (2);
\draw[-] (0) to (3);
\draw[-] (1) to (2);
\draw[-] (1) to (3);
\end{tikzpicture}
\hspace{1.5cm}
\begin{tikzpicture}
\node (0) at (0,0) {$s'_{p,q}$};
\node (1) at (2,0) {$t'_{\neg p, q}$};
\node (2) at (3,2){$u'_{\neg p,\neg q}$};
\node (3) at (3,-2){$v'_{p,\neg q}$};
\node (4) at (1.5,-2.5) {$\M'$};
\draw[-] (0) to (1);
\draw[-] (0) to (2);
\draw[-] (0) to (3);
\draw[-] (1) to (2);
\draw[-] (1) to (3);
\end{tikzpicture}
\end{center}
Define $\mathcal{R}=\{(s,s'),(t,v'),(v,t'),(u,u')\}$. One may easily verify that $\mathcal{R}$ is indeed a $O$-bisimulation between $\M$ and $\M'$. Since $(s,s')\in\mathcal{R}$, we have $(\M,s)\bis_O(\M',s')$. Due to Prop.~\ref{prop.invariance}, $(\M,s)$ and $(\M',s')$ cannot be distinguished by any $\mathcal{L}_O$ formulas.

However, the two pointed models can be distinguished by an $\mathcal{L}_\vartriangleleft$ formula $p\vartriangleleft q$, since $\M,s\vDash p\vartriangleleft q$ but $\M',s'\nvDash p\vartriangleleft q$. To see $\M',s'\nvDash p\vartriangleleft q$, just notice that $S'_{s'}t'u'$ and $(t'\vDash p\iff u'\vDash p)$ but it is {\em not} the case that $(t'\vDash q\iff u'\vDash q)$.
\end{proof}

\section{A sound proof system}\label{sec.soundsystem}

In this section we present a proof system for $\mathcal{L}_\vartriangleleft$ and show its soundness with respect to the class of all frames.

\begin{definition}[Proof system $\mathbb{LS}$] The proof system $\mathbb{LS}$ consists of the following axiom schemas and inference rules.
\[ \begin{array}{ll}
\text{A0} & \text{all instances of tautologies}  \\

\text{A1} &  OB\to (A\vartriangleleft B)\\

\text{A2} & (A\vartriangleleft B)\to(OA\to OB)\\

\text{A3} & O(A\lra B)\lra (A\Lleftarrow\Rrightarrow B)\\

\text{A4} & (A\vartriangleleft B)\land(B\vartriangleleft C)\to (A\vartriangleleft C)\\

\text{A5} & (A\vartriangleleft B_1)\land\cdots\land (A\vartriangleleft B_n)\to (A\vartriangleleft\sharp(B_1,\cdots,B_n)),\\
&\text{where }\sharp\text{ is an }n\text{-ary Boolean connective.}\\

\text{R1} & \text{from }A\to B\text{ and }A\text{ infer }B\\

\text{R2} & \text{from }A \text{ infer }OA\\

\end{array} \]
\end{definition}

The intuition of the axioms can be explained as below. \text{A1} says if the truth value of a formula is fixed, then no matter whether the truth value of the subvenient is fixed, the truth value of the formula is still fixed; \text{A2} says fixing the truth value of the subvenient fixes the truth value of the supervenient, which characterizes the core idea of supervenience; \text{A3} says that, saying the truth value of a biconditional is fixed, amounts to saying that its conditionals supervene on/determine each other; \text{A4} says that supervenience is transitive (see the preceding paragraph of Prop.~\ref{prop.preorder}); \weg{\text{A5} corresponds to `compositionality principle': the truth value of a Boolean formula supervenes on/is determined by its components; in other words, if the truth value of its components are fixed, the truth value of the Boolean formula itself is also fixed.} \text{A5} can be seen as a counterpart in $\mathcal{L}_\Rrightarrow$ of the validity of the supervenience-determined consequence relation between $\{B_1,\cdots,B_n\}$ and $\sharp(B_1,\cdots,B_n)$, i.e. $B_1,\cdots,B_n\Vdash_\mathcal{V}\sharp(B_1,\cdots,B_n)$, which means that once the truth value of each $B_i$ is fixed, the truth value of their Boolean compound is also fixed.

\begin{proposition}\
\begin{enumerate}
\item\label{item11} $\vdash A\vartriangleleft A$
\item\label{item1} $\vdash A\Lleftarrow\Rrightarrow\neg A$
\item\label{item2} $\vdash OA\to((A\vartriangleleft B)\lra OB)$.
\item\label{item3} $\vdash O(A\lra B)\to (OA\lra OB)$.
\item\label{item4} $\vdash A\vartriangleleft B_1\land\cdots\land A\vartriangleleft B_m\to A\vartriangleleft B$, where $B$ is a Boolean compound of $B_1,\cdots,B_m$.
\item\label{item5} $\vdash (OA_1\land \cdots\land OA_n)\to O\sharp(A_1,\cdots,A_n)$, where $\sharp$ is an $n$-ary Boolean connective.
\item\label{item7} $\vdash (A\Lleftarrow\Rrightarrow B)\to (C\vartriangleleft A\lra C\vartriangleleft B)\land (A\vartriangleleft C\lra B\vartriangleleft C)$
\item\label{item8} If $\vdash A$, then $\vdash B\vartriangleleft A.$
\item\label{item9} If $\vdash A\lra B$, then $\vdash A\vartriangleleft B$.
\item\label{item10} If $\vdash A\lra B$, then $\vdash OA\to OB$.
\end{enumerate}
\end{proposition}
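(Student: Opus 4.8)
The plan is to treat every clause as an object-level derivability claim and to prove it by axiom-chasing inside $\mathbb{LS}$, ordering the clauses so that later ones may reuse earlier ones. Four tools do almost all the work: axiom A3, which lets us pass freely between $O(A\lra B)$ and the mutual-supervenience statement $A\Lleftarrow\Rrightarrow B$; rule R2, which internalises any propositional theorem $\vdash C$ into $\vdash OC$; axioms A1 and A2, which convert between $O$ and $\vartriangleleft$ in the two directions; and axioms A4 (transitivity) and A5 (compositionality). Throughout I use A0 together with R1 as ordinary propositional reasoning without comment.

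Several clauses are then immediate. For reflexivity $\vdash A\vartriangleleft A$, instantiate A3 at $B:=A$ to get $O(A\lra A)\lra (A\Lleftarrow\Rrightarrow A)$; since $\vdash A\lra A$ is a tautology, R2 yields $\vdash O(A\lra A)$, and A3 together with the fact that $A\Lleftarrow\Rrightarrow A$ is just $(A\vartriangleleft A)\land(A\vartriangleleft A)$ delivers $\vdash A\vartriangleleft A$. For $\vdash OA\to((A\vartriangleleft B)\lra OB)$, the left-to-right part is A2 rearranged propositionally and the right-to-left part is A1 (weakened by an extra $OA$ antecedent). For $\vdash O(A\lra B)\to(OA\lra OB)$, apply A3 to extract both $A\vartriangleleft B$ and $B\vartriangleleft A$ from $O(A\lra B)$ and feed each into A2. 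The clause $\vdash OA_1\land\cdots\land OA_n\to O\sharp(A_1,\ldots,A_n)$ is simply A5 instantiated at subvenient $\top$, recalling that $\top\vartriangleleft C$ abbreviates $OC$. The three rule-form clauses are equally short: from $\vdash A$, R2 gives $\vdash OA$ and A1 gives $\vdash B\vartriangleleft A$; from $\vdash A\lra B$, R2 and A3 give $\vdash A\Lleftarrow\Rrightarrow B$ whence $\vdash A\vartriangleleft B$; and the final rule-form clause then follows by feeding $A\vartriangleleft B$ into A2.

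The replacement clause $\vdash (A\Lleftarrow\Rrightarrow B)\to (C\vartriangleleft A\lra C\vartriangleleft B)\land(A\vartriangleleft C\lra B\vartriangleleft C)$ uses nothing but A4 applied repeatedly: from $A\vartriangleleft B$ and $C\vartriangleleft A$ transitivity gives $C\vartriangleleft B$, from $B\vartriangleleft A$ and $C\vartriangleleft B$ it gives $C\vartriangleleft A$, and the other biconditional is symmetric. The genuinely fiddly small clause is $\vdash A\Lleftarrow\Rrightarrow\neg A$. Here I would instantiate A3 at $B:=\neg A$, reducing the goal to $\vdash O(A\lra\neg A)$; since $A\lra\neg A$ is contradictory, I obtain this $O$-statement from $\vdash O\top$ (by R2) together with the compositionality clause for the unary connective $\neg$ and a replacement of provable equivalents, using that $\neg\top$ and $A\lra\neg A$ are propositionally interchangeable. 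Alternatively, $A\vartriangleleft\neg A$ drops straight out of reflexivity via A5 with $\sharp:=\neg$, and $\neg A\vartriangleleft A$ follows by a double-negation replacement step together with A4.

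I expect the main obstacle to be the Boolean-compound clause $\vdash A\vartriangleleft B_1\land\cdots\land A\vartriangleleft B_m\to A\vartriangleleft B$, where $B$ is an arbitrary, possibly nested, Boolean combination of $B_1,\ldots,B_m$, since A5 only licenses a single outermost connective. The plan is an induction on the construction of $B$: the base case $B=B_i$ is a premise, and for $B=\sharp(C_1,\ldots,C_k)$ the induction hypothesis supplies $\bigwedge_i (A\vartriangleleft B_i)\to (A\vartriangleleft C_j)$ for each $j$, after which one application of A5 to the $C_j$ plus propositional glue give $A\vartriangleleft\sharp(C_1,\ldots,C_k)$. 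Equivalently, one may observe that, as a function of the truth values of $B_1,\ldots,B_m$, the compound $B$ is itself an $m$-ary Boolean connective, so that A5 applies in one step; the inductive formulation is just the careful version of this remark.
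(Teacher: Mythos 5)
Your proposal is correct: the paper states this proposition without any proof (only the intuitive glosses that follow it), so there is nothing to diverge from, and your axiom-chasing derivations are exactly the routine argument the paper presupposes. The only two items needing real care are both handled soundly — for $A\Lleftarrow\Rrightarrow\neg A$ you reduce via A3 to $\vdash O(A\lra\neg A)$, obtain $\vdash O(\neg\top)$ from R2 and A5, and transfer across the tautology $\neg\top\lra(A\lra\neg A)$ using the rule of item~10, which you derive independently of item~1 (so no circularity); and for the Boolean-compound clause your induction on the structure of $B$, applying A5 once per connective, correctly fills the gap that A5 alone only covers a single outermost connective.
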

Item~\ref{item11} concerns the reflexivity of supervenience: everything supervenes on itself (see the preceding paragraph of Prop.~\ref{prop.preorder}). Item~\ref{item1} can be understood in a way that every formula and its negation are entirely about the same subject-matter~\cite{Humberstone:1993}. The intuition of item~\ref{item2} can be seen from the discussion before Prop.~\ref{prop.equiv}. Item~\ref{item3} says that if the truth value of a biconditional is fixed, then the truth value of one of its sides is fixed if and only if the truth value of another is fixed. Item~\ref{item4} corresponds to `compositionality principle': the truth value of a Boolean formula supervenes on/is determined by its components; in other words, if the truth value of its components are fixed, the truth value of the Boolean formula itself is also fixed. Item~\ref{item7} says that if two formulas supervene on each other, then either of them can be replaced with the other, no matter whether they are the subvenient or the supervenient of other formulas. Item~\ref{item8} says that provable formulas supervene on anything. Item~\ref{item9} says provable equivalents supervene on each other. Note that items~\ref{item5} and~\ref{item10} are respectively the axiom schema (OComp) and the inference rule (OCong) of the proof system $\mathbf{LO}$ in~\cite{Humberstone:2002}.\weg{, therefore $\mathbf{LO}$ is infinite. In contrast, since the $\mathcal{L}_\vartriangleleft$ counterpart item~\ref{item4} of (OComp) is provable in $\mathbb{LS}$, our proof system $\mathbb{LS}$ is finitary.\footnote{What Humberstone is probably unaware of is that, (OComp) is derivable from $OA\land OB\to \sharp(A,B)$ by using induction hypothesis. Thus if the former is replaced with the latter, then one obtains a finitrary proof system for $\mathcal{L}_O$.}}


\weg{The following proposition plays a key role in the completeness later.
\begin{proposition}
For all natural numbers $n\geq 1$, we have
$$\vdash(A_1\vartriangleleft B_1)\land \cdots\land (A_n\vartriangleleft B_n)\to (A_1\land\cdots A_n)\vartriangleleft (B_1\land\cdots B_n).$$
\end{proposition}

\begin{proof}
By induction on $n$. The cases for $n=1$ and $n=2$ follows directly from the axioms.

Now hypothesize the statement holds for the case $n=k$ (IH), we show the statement also holds for the case $n=k+1$. By IH, we have
$$\vdash(A_1\vartriangleleft B_1)\land \cdots\land (A_k\vartriangleleft B_k)\to (A_1\land\cdots\land A_k)\vartriangleleft (B_1\land\cdots\land B_k).$$
Then by $\TAUT$, we obtain
$$\vdash(A_1\vartriangleleft B_1)\land \cdots\land (A_{k+1}\vartriangleleft B_{k+1})\to ((A_1\land\cdots\land A_k)\vartriangleleft (B_1\land\cdots\land B_k))\land (A_{k+1}\vartriangleleft B_{k+1}).$$
By Axiom $\texttt{ADD}$,
$$\vdash((A_1\land\cdots\land A_k)\vartriangleleft (B_1\land\cdots\land B_k))\land (A_{k+1}\vartriangleleft B_{k+1})\to (A_1\land\cdots\land A_{k+1})\vartriangleleft (B_1\land\cdots\land B_{k+1}).$$
Therefore
$$\vdash(A_1\vartriangleleft B_1)\land \cdots\land (A_{k+1}\vartriangleleft B_{k+1})\to (A_1\land\cdots\land A_{k+1})\vartriangleleft (B_1\land\cdots\land B_{k+1}).$$

\end{proof}}

\weg{\begin{proposition}
$\vdash (A\vartriangleleft B\land B\vartriangleleft A)\to (C\vartriangleleft A\lra C\vartriangleleft B)\land (A\vartriangleleft C\lra B\vartriangleleft C)$
\end{proposition}

\begin{proposition}
The inference rule $\dfrac{A\lra B}{OA\to OB}$, denoted by $(OCong)$ in~\cite{Humberstone:2002}, is derivable.
\end{proposition}

As known in propositional logic, the truth value of a Boolean formula is determined by its components: if the truth value of its components are fixed, the truth value of the Boolean formula itself is also fixed. This principle was often called `compositionality principle'. Here is a similar statement in our modal logic of supervenience, which follows from the second axiom.
\begin{proposition}
Let $B$ be any Boolean combination of $B_1,\cdots,B_m$. Then \\$\vdash A\vartriangleleft B_1\land\cdots\land A\vartriangleleft B_m\to A\vartriangleleft B$.
\end{proposition}}

\begin{proposition}[Soundness of $\mathbb{LS}$] The proof system $\mathbb{LS}$ is sound with respect to the class of all frames.
\end{proposition}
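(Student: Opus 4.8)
The plan is to establish soundness by the standard route: show that every axiom schema is valid on the class of all frames, and that every inference rule preserves validity. Since $\mathbb{LS}$ has finitely many axiom schemas (A0--A5) and two rules (R1, R2), this reduces to a finite checklist, most items of which are direct appeals to the semantic definition of $\vartriangleleft$ (and the abbreviation $OB =_{df} \top\vartriangleleft B$).

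First I would dispatch the easy items. A0 is valid since tautologies are valid and the Boolean connectives have their usual semantics. For R1 (modus ponens), if $\vDash A\to B$ and $\vDash A$ then $\vDash B$ in the usual way. For R2, I would observe that if $\vDash A$, then $A$ is true at every world of every model, so for any $u,v$ with $S_wuv$ we have $\M,u\vDash A\iff\M,v\vDash A$ trivially (both sides true); hence $\M,w\vDash OA$, i.e.\ $OA$ is valid. Axioms A1 and A2 are exactly Fact~\ref{fact.O} and the contrapositive content of Prop.~\ref{prop.equiv}'s ingredients: A1 is $\vDash OB\to(A\vartriangleleft B)$, which is Fact~\ref{fact.O}, and A2, $\vDash(A\vartriangleleft B)\to(OA\to OB)$, follows by unfolding $O$ as $\top\vartriangleleft B$ and using the definition of $\vartriangleleft$ together with the observation that $OA$ makes every $S_w$-pair $A$-agreeing, so that $A\vartriangleleft B$ forces them to be $B$-agreeing, i.e.\ $OB$. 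Axiom A4 is just the transitivity of $\vartriangleleft$, which is Prop.~\ref{prop.preorder}(ii), so it is valid.

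The two remaining schemas, A3 and A5, require slightly more unpacking but are still direct semantic arguments. For A3, $O(A\lra B)\lra(A\Lleftarrow\Rrightarrow B)$, I would fix a model $\M$ and world $w$ and argue both directions semantically: $\M,w\vDash O(A\lra B)$ means that for every $S_w$-pair $u,v$, the biconditional $A\lra B$ has the same truth value at $u$ and $v$, and I would show this is equivalent to the conjunction $(A\vartriangleleft B)\land(B\vartriangleleft A)$ by a short case analysis on the truth values of $A$ and $B$ at $u$ and $v$. For A5, $(A\vartriangleleft B_1)\land\cdots\land(A\vartriangleleft B_n)\to(A\vartriangleleft\sharp(B_1,\dots,B_n))$, the key point is that if $u$ and $v$ agree on $A$, then each hypothesis $A\vartriangleleft B_i$ forces agreement on $B_i$; since $\sharp$ is a Boolean connective, its value at a world is a function of the values of $B_1,\dots,B_n$ there, so agreement on all the $B_i$ yields agreement on $\sharp(B_1,\dots,B_n)$, giving $A\vartriangleleft\sharp(B_1,\dots,B_n)$.

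I do not anticipate a genuine obstacle here, since soundness for a Hilbert system is by design a routine schema-by-schema verification; the mild care required is in A3 (getting the biconditional-agreement equivalence right in all four truth-value combinations) and in phrasing A5 uniformly for an arbitrary $n$-ary Boolean connective $\sharp$. If I wanted to shorten the write-up, I would note that A1 and A4 are already recorded as Fact~\ref{fact.O} and Prop.~\ref{prop.preorder}, and simply cite them, concentrating the detailed argument on A2, A3, and A5.
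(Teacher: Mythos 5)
Your proposal is correct and follows essentially the same route as the paper: a routine axiom-by-axiom and rule-by-rule verification against the semantics of $\vartriangleleft$, with A1 and A4 discharged by Fact~\ref{fact.O} and Prop.~\ref{prop.preorder}, and A3 handled by the same agreement-on-$A\lra B$ versus agreement-on-$A$/agreement-on-$B$ equivalence. The only difference is one of presentation: the paper writes out just the A3 case (treating the rest as routine), whereas you sketch the full checklist, which is the same argument made explicit.
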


\begin{proof}
We only show the validity of axiom \text{A3}. Let a pointed model $(\M,w)$ where $\M=\lr{W,S,V}$ be given.

Firstly, suppose $\M,w\vDash O(A\lra B)$, we need to show that $\M,w\vDash A\Lleftarrow\Rrightarrow B$, that is to show, $\M,w\vDash (A\Rrightarrow B)\land (B\Rrightarrow A)$. We show $\M,w\vDash A\Rrightarrow B$, the proof for $\M,w\vDash B\Rrightarrow A$ is similar. Assume for any $u,v\in W$ such that $S_wuv$ and $(\M,u\vDash A\iff \M,v\vDash A)$. From the supposition and $S_wuv$, it follows that $(\M,u\vDash A\lra B\iff \M,v\vDash A\lra B)$. Then it is easy to show that $(\M,u\vDash B\iff \M,v\vDash B)$. Therefore, $\M,w\vDash A\Rrightarrow B$.

Conversely, suppose $\M,w\vDash A\Lleftarrow\Rrightarrow B$, i.e., $\M,w\vDash (A\Rrightarrow B)\land (B\Rrightarrow A)$. Assume for any $u,v\in W$ such that $S_wuv$, we need to show that $(\M,u\vDash A\lra B\iff \M,v\vDash A\lra B)$. By supposition and $S_wuv$, we obtain that $(\M,u\vDash A\iff \M,v\vDash A)\iff (\M,u\vDash B\iff \M,v\vDash B)$. From this it follows that $(\M,u\vDash A\lra B\iff \M,v\vDash A\lra B)$, as required.
\end{proof}

\weg{\section{Canonical Model and Completeness}

\weg{The semantics of $A\vartriangleleft B$ can be rephrased as follows:
\[\begin{array}{|lcl|}
\hline
\M,w\vDash A\vartriangleleft B&\iff&\text{for all }u,v\in\M\text{ such that }S_wuv, \text{ if }(\M,u\vDash A \text{ and } \M,v\vDash A),\\
&&\text{ then }(\M,u\vDash B\iff \M,v\vDash B), \text{ and},\\
&&\text{for all }u,v\in\M\text{ such that }S_wuv, \text{ if }(\M,u\vDash \neg A \text{ and } \M,v\vDash\neg A),\\
&&\text{ then }(\M,u\vDash B\iff \M,v\vDash B).\\
\hline
\end{array}
\]

If we define a relativized agreement operator $O(\cdot,\cdot)$, as
\[\begin{array}{lcl}
\M,w\vDash O(A,B)&\iff&\text{for all }u,v\in\M\text{ such that }S_wuv, \text{ if }(\M,u\vDash A \text{ and } \M,v\vDash A),\\
&&\text{then }(\M,u\vDash B\iff \M,v\vDash B).\\
\end{array}\]

Then $(A\vartriangleleft B)$ is logically equivalent to $O(A,B)\land O(\neg A,B)$. So to show the truth lemma for the case $A\vartriangleleft B$, we}

\begin{definition}[Canonical Model]
\begin{itemize}\
\item $W^c=\{w\mid w\text{ is a maximal consistent set for }\mathbb{LS}\}$
\item $S^c$ is defined such that for every $w\in W^c$, we have
\[\begin{array}{lcl}
S^c_wuv&\iff& \text{for all }A,B,\text{ if }A\vartriangleleft B\in w\text{ and }(A\in u\iff A\in v),\\
&&\text{then }(B\in u\iff B\in v).
\end{array}\]
\item $V^c(p)=\{w\in W^c\mid p\in w\}$.
\end{itemize}
\end{definition}

It is easy to show that $S_w^c$ is reflexive and symmetric, but not transitive.

\begin{proposition}
Let $C\vartriangleleft D\notin w$ with $w\in W^c$. Then there exist $u,v\in W^c$ such that $S^c_wuv$ and $(C\in u\iff C\in v)$ but $D\in u$ and $D\notin v$.
\end{proposition}

\begin{proof} Suppose $C\vartriangleleft D\notin w$, then $OD\notin w$.
If we show
\begin{enumerate}
\item[(i)] $\Gamma_1=\{fA\mid A\vartriangleleft B\in w\}\cup \{gC\land D\}$ and \\
$\Gamma_2=\{\neg fA\mid A\vartriangleleft B\in w\}\cup \{gC\land \neg D\}$ are both consistent, or,
\item[(ii)] $\Sigma_1=\{hA\land jB\mid A\vartriangleleft B\in w\}\cup\{kC\land D\}$ and \\
$\Sigma_2=\{hA\land jB\mid A\vartriangleleft B\in w\}\cup\{kC\land\neg D\}$ are both consistent.
\end{enumerate}
Where $f,g,h,j,k$ are all functions which assign every formula to either the formula itself or its negation, \weg{that is, signings in the sense of~\cite{Humberstone:2002}, }$f$ and $h$ are not necessarily different, $g$ and $k$ are not necessarily different. Then the statement will be proved according to the definition of canonical relation and Lindenbaum's Lemma.

Suppose $\Gamma_1$ or $\Gamma_2$ are inconsistent, then we have
either
\end{proof}

\begin{lemma}[Truth Lemma]
For all $A\in L(\vartriangleleft)$, for all $w\in W^c$, we have
$$\M^c,w\vDash A\iff A\in w.$$
\end{lemma}

\begin{proof}
By induction on $A$. We only consider the case $C\vartriangleleft D$.

``$\Longleftarrow$'': Suppose $C\vartriangleleft D\in w$, we need to show $\M^c,w\vDash C\vartriangleleft D$. For this, assume for any $u,v\in W^c$ such that $S^c_wuv$ and $(\M^c,u\vDash C\iff\M^c,v\vDash C)$, it suffices to show $(\M^c,u\vDash D\iff \M^c,v\vDash D)$. This is straightforward by the definition of $S^c_w$, the supposition, the assumption, and the inductive hypothesis.

``$\Longrightarrow$'': Suppose $C\vartriangleleft D\notin w$, to show $\M^c,w\nvDash C\vartriangleleft D$, i.e. we must find $u,v\in S^c$ such that $S^c_wuv$ with $u,v$ assigning the same value to $C$ but different values to $D$.

\end{proof}}

\section{Generalized supervenience logics}\label{sec.generalizedsup}

So far we have been devoted to the dyadic supervenience operator, a function taking a pair of formulas as arguments. We now generalize this case into the case when the supervenience operator takes any finitely many formulas as arguments. In detail,  the generalized language (denoted $\mathcal{L}_{\vartriangleleft^\infty}$) is defined as
$$A::= p\mid \neg A\mid A\land A\mid (A,\cdots,A)\vartriangleleft A$$
Where the construct $(A,\cdots,A)\vartriangleleft A$ contains $n+1$ formulas for any $n\in\mathbb{N}$.

The Kripke model of $\mathcal{L}_{\vartriangleleft^\infty}$ is defined as that of $\mathcal{L}_\vartriangleleft$. The new construct $(A_1,\cdots,A_n)\vartriangleleft B$ is interpreted as follows:
\[\begin{array}{|lcl|}
\hline
\M,w\vDash (A_1,\cdots,A_n)\vartriangleleft B&\iff&\text{for all }u,v\in\M\text{ such that }S_wuv, \\
&&\text{if }(\M,u\vDash A_i \iff\M,v\vDash A_i)\text{ for all }i\in\{1,\cdots,n\},\\
&&\text{then }(\M,u\vDash B\iff \M,v\vDash B).\\
\hline
\end{array}
\]
Recall the semantics of the operator $D$  in Sec.~\ref{sec.determinacy}. One may easily see that the interpretation of the operator $D$ in the language $\mathcal{L}_D$ is a special case of that of $\vartriangleleft$ in $\mathcal{L}_{\vartriangleleft^\infty}$, when $S$ is defined in a way such that $S_wuv$ just in case $wRu$ and $wRv$ for all $w,u,v$ in the underlying model.

We could also consider a class of languages $\mathcal{L}_{\vartriangleleft^n}$ for all $n\in \mathbb{N}$, defined inductively as follows:
$$A::= p\mid \neg A\mid A\land A\mid (A,\cdots,A)\vartriangleleft^nA$$
Where $\vartriangleleft^n$ is an $n+1$-ary operator for each $n$ and interpreted by the following:
\[\begin{array}{|lcl|}
\hline
\M,w\vDash (A_1,\cdots,A_n)\vartriangleleft^n B&\iff&\text{for all }u,v\in\M\text{ such that }S_wuv, \\
&&\text{if }(\M,u\vDash A_i \iff\M,v\vDash A_i)\text{ for all }i\in\{1,\cdots,n\},\\
&&\text{then }(\M,u\vDash B\iff \M,v\vDash B).\\
\hline
\end{array}
\]
Then the agreement operator $O$ in Section~\ref{subsec.agreement} is $\vartriangleleft^0$, and the supervenience operator $\vartriangleleft$ in Section~\ref{sec.supervenience} is $\vartriangleleft^1$. Accordingly, $\mathcal{L}_O$ is $\mathcal{L}_{\vartriangleleft^0}$ and $\mathcal{L}_\vartriangleleft$ is $\mathcal{L}_{\vartriangleleft^1}$.

It should be clear that for all $n\in\mathbb{N}$, $(A_1,\cdots,A_n)\vartriangleleft^n B$ is logically equivalent to $(A_1,\cdots,A_n)\vartriangleleft B$. This implies that $\mathcal{L}_{\vartriangleleft^\infty}$ is an extension of $\mathcal{L}_{\vartriangleleft^n}$ for any $n\in \mathbb{N}$, and thus $\mathcal{L}_{\vartriangleleft^\infty}$ is at least as expressive as $\mathcal{L}_{\vartriangleleft^n}$ for any $n\in \mathbb{N}$.

Moreover, we have seen that $\vartriangleleft^0$ is definable in terms of $\vartriangleleft^1$, as $OA=_{df} \top\vartriangleleft^1 A$, or equivalently, $\bot\vartriangleleft^1 A$. In general, $\vartriangleleft^n$ is definable in terms of $\vartriangleleft^{n+1}$, as $(A_1,\cdots,A_n)\vartriangleleft^n B=_{df} (\top,A_1,\cdots,A_n)\vartriangleleft^{n+1} B$, or equivalently, $(\bot,A_1,\cdots,A_n)\vartriangleleft^{n+1} B$.

Similar to Fact~\ref{fact.O} and Prop.~\ref{prop.equiv}, we can show
\begin{proposition}For all $A,B,C\in \mathcal{L}_{\Rrightarrow^2}$,
\begin{enumerate}
\item $\vDash(C\Rrightarrow^1 B)\to ((C,A)\Rrightarrow^2 B).$
\item $\vDash(C\Rrightarrow^1A)\to (((C,A)\Rrightarrow^2 B)\lra (C\Rrightarrow^1 B))$.
\end{enumerate}
\end{proposition}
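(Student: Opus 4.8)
The plan is to prove both validities by directly unfolding the semantic clauses for $\Rrightarrow^1$ and $\Rrightarrow^2$, in the same style as the proofs of Fact~\ref{fact.O} and Prop.~\ref{prop.equiv}, of which the present statement is the arity-shifted generalisation (one recovers them by taking $C=\top$, using $\top\Rrightarrow^1 X\equiv OX$ and $(\top,X)\Rrightarrow^2 Y\equiv X\Rrightarrow^1 Y$).

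For the first item, I would fix an arbitrary pointed model $(\M,w)$ with $\M=\lr{W,S,V}$ and assume $\M,w\vDash C\Rrightarrow^1 B$. To establish $\M,w\vDash (C,A)\Rrightarrow^2 B$, take any $u,v\in W$ with $S_wuv$ satisfying both $(\M,u\vDash C\iff \M,v\vDash C)$ and $(\M,u\vDash A\iff \M,v\vDash A)$. The key observation is that the antecedent in the clause for $\Rrightarrow^2$ is a conjunction, hence stronger than the antecedent for $\Rrightarrow^1$: discarding the conjunct about $A$ we retain $(\M,u\vDash C\iff \M,v\vDash C)$, so the assumption $C\Rrightarrow^1 B$ together with $S_wuv$ already yields $(\M,u\vDash B\iff \M,v\vDash B)$. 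This is pure monotonicity in the subvenient list and needs no hypothesis.

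For the second item, fix $(\M,w)$ and assume $\M,w\vDash C\Rrightarrow^1 A$; I would prove the biconditional direction by direction. The direction $(C\Rrightarrow^1 B)\to((C,A)\Rrightarrow^2 B)$ is an instance of the first item and therefore holds outright, so the hypothesis $C\Rrightarrow^1 A$ is used only for the converse. For $((C,A)\Rrightarrow^2 B)\to(C\Rrightarrow^1 B)$, assume $\M,w\vDash (C,A)\Rrightarrow^2 B$ and take $u,v$ with $S_wuv$ and $(\M,u\vDash C\iff \M,v\vDash C)$; I must produce $(\M,u\vDash B\iff \M,v\vDash B)$. Here the hypothesis does the work: from $\M,w\vDash C\Rrightarrow^1 A$, $S_wuv$ and the agreement on $C$, the semantics of $\Rrightarrow^1$ delivers $(\M,u\vDash A\iff \M,v\vDash A)$. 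Now $u,v$ agree on both $C$ and $A$, so the assumption $(C,A)\Rrightarrow^2 B$ together with $S_wuv$ gives the agreement on $B$.

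The conceptual content is that when $A$ is itself determined by $C$, adjoining $A$ to the subvenient list is redundant: every $S_w$-pair agreeing on $C$ automatically agrees on $A$, so the pairs constrained by $(C,A)\Rrightarrow^2 B$ coincide with those constrained by $C\Rrightarrow^1 B$. I do not anticipate a genuine obstacle, as both arguments are routine unfoldings of the semantics; the only thing to watch is to route exactly one direction of the biconditional through the first item and to invoke the hypothesis $C\Rrightarrow^1 A$ solely for the remaining direction, mirroring the structure of the proof of Prop.~\ref{prop.equiv}.
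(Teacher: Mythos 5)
Your proof is correct and takes essentially the same approach as the paper: the paper in fact omits the argument, remarking only that the proposition is shown ``similar to Fact~\ref{fact.O} and Prop.~\ref{prop.equiv}'', and your unfolding --- item 1 by weakening the antecedent list, item 2 by obtaining one direction from item 1 and invoking the hypothesis $C\Rrightarrow^1 A$ only for the converse --- is precisely the arity-shifted version of those two proofs. The only cosmetic difference is that the paper establishes the converse direction of Prop.~\ref{prop.equiv} by contradiction whereas you argue directly; the two formulations are equivalent.
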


In general, we have
\begin{proposition} Let $n\in\mathbb{N}$. For all $A_1,\cdots,A_n,A_{n+1},B$,
\begin{enumerate}
\item $\vDash((A_1,\cdots,A_{n})\Rrightarrow^n B)\to ((A_1,A_2,\cdots,A_{n+1})\Rrightarrow^{n+1} B).$
\item $\vDash((A_1,\cdots,A_{n})\Rrightarrow^n A_{n+1})\to (((A_1,\cdots,A_{n+1})\Rrightarrow^{n+1} B)\lra ((A_1,\cdots,A_{n})\Rrightarrow^n B))$.
\end{enumerate}
\end{proposition}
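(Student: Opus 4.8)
The plan is to establish both items purely model-theoretically, by unwinding the semantics of $\Rrightarrow^n$ and $\Rrightarrow^{n+1}$; this simply generalises the arguments behind Fact~\ref{fact.O} and Prop.~\ref{prop.equiv} from the binary case to arbitrary arity. Throughout I would fix an arbitrary pointed model $(\M,w)$ with $\M=\lr{W,S,V}$ and recall that truth of $(A_1,\ldots,A_m)\Rrightarrow^m C$ at $w$ quantifies over all pairs $u,v$ with $S_wuv$, demanding that agreement of $u$ and $v$ on every $A_i$ forces their agreement on $C$.

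For item 1 the key observation is that the antecedent requirement of the $(n+1)$-ary operator is logically \emph{stronger} than that of the $n$-ary one: any $u,v$ that agree on $A_1,\ldots,A_{n+1}$ in particular agree on $A_1,\ldots,A_n$. So, assuming $\M,w\vDash(A_1,\ldots,A_n)\Rrightarrow^n B$ and taking any $u,v$ with $S_wuv$ that agree on all of $A_1,\ldots,A_{n+1}$, I would discard the last agreement clause, apply the hypothesis, and conclude agreement on $B$. This is nothing but monotonicity of supervenience in its subvenient arguments, and it needs no side assumption.

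For item 2 I would fix the standing hypothesis $\M,w\vDash(A_1,\ldots,A_n)\Rrightarrow^n A_{n+1}$ and prove the biconditional one implication at a time. The right-to-left direction is free: it is exactly item 1, read with $B$ as supervenient and $A_{n+1}$ as the newly added subvenient, so $(A_1,\ldots,A_n)\Rrightarrow^n B$ immediately yields $(A_1,\ldots,A_{n+1})\Rrightarrow^{n+1} B$, and this half does not even invoke the standing hypothesis. The left-to-right direction is where the hypothesis does the work: assuming $\M,w\vDash(A_1,\ldots,A_{n+1})\Rrightarrow^{n+1} B$ and taking any $u,v$ with $S_wuv$ that agree on $A_1,\ldots,A_n$, the standing hypothesis together with $S_wuv$ lets me upgrade this to agreement on $A_{n+1}$ as well, after which the $(n+1)$-ary supervenience delivers agreement on $B$.

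I do not anticipate a genuine obstacle here; both items reduce to careful bookkeeping of the agreement conditions over a fixed $S_wuv$. The only point worth flagging is that item 1 already supplies one half of item 2, so the actual content of item 2 resides entirely in the left-to-right direction, where the assumption $(A_1,\ldots,A_n)\Rrightarrow^n A_{n+1}$ is used exactly once to convert $n$-fold agreement into $(n+1)$-fold agreement.
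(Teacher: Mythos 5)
Your proof is correct and takes essentially the same approach the paper intends: the paper gives no detailed proof of this proposition, stating only that it follows ``similar to Fact~\ref{fact.O} and Prop.~\ref{prop.equiv}'', and your semantic unwinding is exactly that generalization (your item 1 generalizes Fact~\ref{fact.O}, your item 2 generalizes Prop.~\ref{prop.equiv}, including the observation that the right-to-left half is just item 1 and the standing hypothesis is used once to upgrade $n$-fold agreement to $(n+1)$-fold agreement). The only cosmetic difference is that the paper argues the corresponding direction of Prop.~\ref{prop.equiv} by contradiction, while you argue it directly.
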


In Prop.~\ref{prop.moreexp}, we have shown that $\mathcal{L}_{\vartriangleleft^1}$ is more expressive than $\mathcal{L}_{\vartriangleleft^0}$. We guess the result can be generalized to the following, which we leave for future work.

\begin{conjecture}\label{conj.exp-ls}
For all $n\in\mathbb{N}$, $\mathcal{L}_{\vartriangleleft^{n+1}}$ is more expressive than $\mathcal{L}_{\vartriangleleft^n}$.
\end{conjecture}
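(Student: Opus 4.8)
The plan is to fix an arbitrary $n\in\mathbb{N}$ and mirror the argument for the base case $n=0$, which is precisely Prop.~\ref{prop.moreexp}. Since we already know that $\vartriangleleft^n$ is definable in terms of $\vartriangleleft^{n+1}$ via $(A_1,\cdots,A_n)\vartriangleleft^n B=_{df}(\top,A_1,\cdots,A_n)\vartriangleleft^{n+1} B$, the language $\mathcal{L}_{\vartriangleleft^{n+1}}$ is at least as expressive as $\mathcal{L}_{\vartriangleleft^n}$, so the entire content of the statement is the failure of the converse: $\mathcal{L}_{\vartriangleleft^n}$ is \emph{not} at least as expressive as $\mathcal{L}_{\vartriangleleft^{n+1}}$. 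I would witness this with the single formula $(p_1,\cdots,p_{n+1})\vartriangleleft^{n+1} q$, built from the $n+2$ distinct proposition symbols $p_1,\cdots,p_{n+1},q$, and construct two pointed models that this formula separates but that no $\mathcal{L}_{\vartriangleleft^n}$ formula can tell apart.

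The engine for ``no $\mathcal{L}_{\vartriangleleft^n}$ formula separates them'' should be an invariance result under a suitable $\vartriangleleft^n$-bisimulation generalizing the $O$-bisimulation (the case $n=0$). The natural generalization tracks, instead of the triple $u,v,x$ of ($O$-Zig)/($O$-Zag), a \emph{source} world together with enough partners to probe simultaneous agreement on $n+1$ formulas: a zig clause would demand that for every source $u$ with partners $v_0,\cdots,v_k$ (with $k$ depending on $n$) such that $S_w u v_i$ holds for all $i$, there are matching worlds $u',v_0',\cdots,v_k'$ with $S'_{w'}u'v_j'$ for all $j$ and with each original world $\mathcal{R}$-related to at least one image (not necessarily respecting order), plus a symmetric zag clause. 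I would then prove, by induction on formulas exactly as in Prop.~\ref{prop.invariance}, that $\vartriangleleft^n$-bisimilar pointed models satisfy the same $\mathcal{L}_{\vartriangleleft^n}$ formulas; note that we need only this invariance direction, which (as in the $n=0$ case) suffices and sidesteps any Hennessy--Milner concern. The inductive step for $(A_1,\cdots,A_n)\vartriangleleft^n B$ takes a falsifying $S_w$-pair that agrees on all $A_i$ but disagrees on $B$ and uses the zig clause, together with (Atom) and the induction hypothesis, to transport such a pair to the other model.

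For the separating models I would scale up the four-world construction of Prop.~\ref{prop.moreexp}, keeping $S$ empty at every non-designated world so that all nested $\vartriangleleft$-subformulas collapse to truth there and only the atomic agreement patterns among the partner worlds matter. In $\M$ every pair in $S_s$ should disagree on at least one of $p_1,\cdots,p_{n+1}$, so that $(p_1,\cdots,p_{n+1})\vartriangleleft^{n+1} q$ holds vacuously at $s$; in $\M'$ some pair in $S'_{s'}$ should agree on all of $p_1,\cdots,p_{n+1}$ yet disagree on $q$, so that the formula fails at $s'$. The preliminary observations just stated (the generalizations of Fact~\ref{fact.O} and Prop.~\ref{prop.equiv}) constrain the design exactly as $Op$ and $Oq$ had to be false in the base case: the extra subvenient $p_{n+1}$ must genuinely matter, i.e.\ it must not supervene on $p_1,\cdots,p_n$, and $(p_1,\cdots,p_n)\vartriangleleft^n q$ must itself fail to separate. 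The crucial and delicate requirement is to populate $S_s$ and $S'_{s'}$ so that, for every tuple of at most $n$ Boolean combinations of the atoms, the pattern of which $S$-pairs agree on all of them, together with their $q$-agreement, is matched across the two models by a $\vartriangleleft^n$-bisimulation $\mathcal{R}$ containing $(s,s')$, while the genuinely $(n+1)$-coordinate agreement pattern is not.

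I expect the main obstacle to lie precisely in this tension: the zig/zag clauses must be \emph{weak} enough that the two separating models really are $\vartriangleleft^n$-bisimilar, yet \emph{strong} enough that the invariance induction pushes through every subcase (the base case already splits into several, and with $n+1$ coordinates the number of agreement patterns to reconcile grows combinatorially). Getting these two requirements to meet --- in particular choosing the correct arity of tuples in the bisimulation and verifying that the scaled-up models satisfy all the clauses --- is where the real work resides, and it is also where the risk lies that the conjecture genuinely requires a new idea rather than a routine lift of Prop.~\ref{prop.moreexp}.
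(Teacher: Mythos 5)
The first thing to note is that the paper contains no proof of this statement: it is stated as Conjecture~\ref{conj.exp-ls} and explicitly left for future work. The paper's concluding section sketches an intended strategy --- define a bisimulation notion for $\mathcal{L}_{\vartriangleleft^n}$ adapting the $O$-bisimulation, prove an invariance result, then build two such bisimilar models (with at least $2^{n+1}$ worlds) separated by an $\mathcal{L}_{\vartriangleleft^{n+1}}$ formula --- and your proposal coincides with that sketch in every respect: the same reduction of the problem to failure of the converse direction (since $(A_1,\cdots,A_n)\vartriangleleft^n B$ is definable as $(\top,A_1,\cdots,A_n)\vartriangleleft^{n+1}B$), the same two-step bisimulation-plus-countermodel plan, the same scaling-up of the four-world construction of Prop.~\ref{prop.moreexp}, and the same use of the generalizations of Fact~\ref{fact.O} and Prop.~\ref{prop.equiv} to constrain the design of the models.

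However, your proposal is a plan, not a proof, and the gap in it is exactly the gap that keeps the statement a conjecture in the paper. You never write down the zig/zag clauses of the $\vartriangleleft^n$-bisimulation (you only describe their intended shape, with an unspecified arity parameter $k$), you never exhibit the two separating models, and consequently neither the invariance lemma (the analogue of Prop.~\ref{prop.invariance}, whose case analysis already splits into many subcases at $n=0$ and grows combinatorially with $n$) nor the bisimilarity of the constructed models is verified. You candidly identify the tension --- the bisimulation must be weak enough that the separating models are bisimilar yet strong enough that invariance goes through --- but resolving that tension \emph{is} the open problem; nothing in the proposal discharges it. So the proposal cannot be counted as a correct proof; it is a faithful restatement of the research programme the paper itself proposes and leaves unexecuted.
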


\weg{\[\begin{array}{|lcl|}
\hline
\M,w\vDash (A_1,\cdots,A_n)\vartriangleleft^n_m (B_1,\cdots,B_m)&\iff&\text{for all }u,v\in\M\text{ such that }S_wuv, \\
&&\text{if }(\M,u\vDash A_i \iff\M,v\vDash A_i)\text{ for all }i\in\{1,\cdots,n\},\\
&&\text{then }(\M,u\vDash B_j\iff \M,v\vDash B_j)\text{ for all }j\in\{1,\cdots,m\}.\\
\hline
\end{array}
\]}

In what follows, for the sake of presentation, we define $u^\M(A)$ for all formulas $A$ and all valuations $u$ such that $u^\M(A)=T$ iff $\M,u\vDash A$, and we will drop the superscript $\M$ when it is clear.
\weg{\[
u^\M(A)=\left\{
  \begin{array}{ll}
    1, & \hbox{if $\M,u\vDash A$;} \\
    0, & \hbox{otherwise.}
  \end{array}
\right.
\]}

So far we have considered the supervenience of a formula on another (or a set of formulas). We can generalize this kind of supervenience to the supervenience of a set of formulas on another set.
For all $n,m\in\mathbb{N}$,
\[\begin{array}{|lcl|}
\hline
\M,w\vDash (A_1,\cdots,A_n)\vartriangleleft^n_m (B_1,\cdots,B_m)&\iff&\text{for all }u,v\in\M\text{ such that }S_wuv, \\
&&\text{if }u(A_i)=v(A_i)\text{ for all }i\in\{1,\cdots,n\},\\
&&\text{then }u(B_j)=v(B_j)\text{ for all }j\in\{1,\cdots,m\}.\\
\hline
\end{array}
\]

Intuitively, it means that the set of formulas $B=\{B_1,\cdots,B_m\}$ supervenes on another set $A=\{A_1,\cdots,A_n\}$. One may easily verify that the empty set supervenes on everything.

Note that $\vartriangleleft^n_m$ is interdefinable with $\vartriangleleft^n$ (viz. $\vartriangleleft^n_1$). Firstly, the operator $\vartriangleleft^n$ is definable with $\vartriangleleft^n_m$, as
$$((A_1,\cdots,A_n)\vartriangleleft^nB)=_{df}((A_1,\cdots,A_n)\vartriangleleft^n_m (B,\cdots,B)),$$
where $B$ occurs $m$ times. Conversely, $\vartriangleleft^n_m$ is definable in terms of $\vartriangleleft^n$, as
$$((A_1,\cdots,A_n)\vartriangleleft^n_m(B_1,\cdots,B_m))=_{df} \bigwedge_{1\leq i\leq m}((A_1,\cdots,A_n)\vartriangleleft^n B_i).$$
This is similar to the interdefinability result of the relations `supervenience$_1$' (between a property and a class of properties) and `supervenience' (between two classes of properties)~\cite[pp.~102-103]{Humberstone:1992}.

\medskip

\weg{
In what follows, we will obtain a much general result, i.e., for all $n\in\mathbb{N}$, $\mathcal{L}_{\vartriangleleft^{n+1}}$ is more expressive than $\mathcal{L}_{\vartriangleleft^n}$. We use the same strategy as in Section~\ref{sec.exp-ls-lo}.

First, we define a notion of bisimulation for $\mathcal{L}_{\vartriangleleft^n}$.

\begin{proposition}
For all $n\in\mathbb{N}$, $\mathcal{L}_{\vartriangleleft^{n+1}}$ is more expressive than $\mathcal{L}_{\vartriangleleft^n}$.
\end{proposition}}

Recall the determinacy operator $D$ in Section~\ref{sec.determinacy}, whose semantics is a special case of that of the generalized supervenience operator $\vartriangleleft$. Like $\vartriangleleft$, We can do the similar thing for the operator $D$, and denote $\mathcal{L}_{D^0}$, $\mathcal{L}_{D^1}$, $\cdots$, $\mathcal{L}_{D^{n}}$, respectively, the 1-argument, 2-argument, $\cdots$, $n+1$-argument fragments of $\mathcal{L}_D$, all of which have, respectively, $D^0(=\Delta)$, $D^1$, $\cdots$, $D^n$ as the sole primitive modalities. Also, we write $\mathcal{L}_{D^\infty}$ for $\mathcal{L}_D$. We can see that $\mathcal{L}_{D^0}$ is $\mathcal{L}_\Delta$. We will show that all $\mathcal{L}_{D^i}$ (where $i\in\mathbb{N}\cup\{\infty\}$) are equally expressive.

As observed in~\cite{Gorankoetal:2016}, when $D$ is defined on arbitrary models, i.e. the accessibility relation $R$ is arbitrary:
\[\begin{array}{lll}
\M,w\vDash D(A_1,\cdots,A_n;B)&\iff&\text{for all }u,v\in W\text{ such that }wRu\text{ and }wRv,\\
&& \text{if }(\M,u\vDash A_i\iff \M,v\vDash A_i)\text{ holds for all }i\leq n,\\
&&\text{then }(\M,u\vDash B\iff \M,v\vDash B).\\
\end{array}\]

Then $D$ is definable in terms of the necessity operator $\Box$. And since $\Box$ is definable in terms of $D$ on the class of reflexive models (but not in general). Thus $\mathcal{L}_D$ is equally expressive as standard modal logic $\mathcal{L}_\Box$ on the class of reflexive models. We will show, however, in general, i.e. on the class of all models, $\mathcal{L}_D$ is less expressive than $\mathcal{L}_\Box$.

In~\cite[Section 8.2]{Gorankoetal:2016}, the authors listed an open research direction on how to axiomatize $\mathcal{L}_D$ over various classes of frames. In this section, we resolve this issue.

\subsection{Comparing the expressive power of $\mathcal{L}_\Delta$ and $\mathcal{L}_D$}\label{sec.exp-lcld}

In this part, we will demonstrate that all $\mathcal{L}_{D^i}$ (where $i\in\mathbb{N}\cup\{\infty\}$) are equally expressive on the class of all models. Specifically, $\mathcal{L}_\Delta$ is equally expressive as $\mathcal{L}_D$ on that class. In the sequel, the accessibility relation $R$ has no any constraints.

It should be clear that $\Delta=(D^0)$ is definable in terms of $D^1$, as $\vDash\Delta B\lra D^1(\top;B)$. Thus $\mathcal{L}_{D^1}$ is an extension of $\mathcal{L}_\Delta$. Also, $\vDash D^1(A;B)\lra D^2(\top,A;B)$, thus $\mathcal{L}_{D^2}$ is an extension of $\mathcal{L}_{D^1}$. In general, we have $\vDash D^n(A_1,\cdots, A_n;B)\lra D^{n+1}(\top,A_1,\cdots,A_n;B)$. Thus for $m\leq n\in\mathbb{N}$, $\mathcal{L}_{D^n}$ is at least as expressive as $\mathcal{L}_{D^m}$, and $\mathcal{L}_{D}(=\mathcal{L}_{D^\infty})$ is at least as expressive as all $\mathcal{L}_{D^i}$. \weg{We will abbreviate $D(\epsilon;B)$ as $\Delta B$. }We will show that $D$ is also definable in terms of $\Delta$.
Before that, we first give some appetizers. We have that
$$D(A;B)\lra(\Delta(A\to B)\vee\Delta(A\to\neg B))\land(\Delta(\neg A\to B)\vee\Delta(\neg A\to\neg B))$$
and
\[\begin{array}{lll}
D(A_1,A_2;B)&\lra&(\Delta(A_1\land A_2\to B)\vee\Delta(A_1\land A_2\to\neg B))\land\\
&&(\Delta(\neg A_1\land A_2\to B)\vee\Delta(\neg A_1\land A_2\to\neg B))\land\\
&&(\Delta(A_1\land \neg A_2\to B)\vee\Delta(A_1\land\neg A_2\to\neg B))\land\\
&&(\Delta(\neg A_1\land\neg A_2\to B)\vee\Delta(\neg A_1\land\neg A_2\to\neg B))\\
\end{array}\]

Now we lift the results to a general level. Let $\{A_1,\cdots,A_n\}$ be a finite nonempty set of formulas. For each $T\subseteq \{1,\cdots,n\}$, let $B_T$ be the conjunction $B_1\land \cdots\land B_n$ such that if $i\in T$, then $B_i=A_i$; otherwise, $B_i=\neg A_i$. The definition of $B_T$ is very similar to Kim's notion of $B$-maximal properties~\cite[p.~58]{Kim:1984}, which though was defined for {\em properties} rather than formulas. The notion was also introduced in~\cite[p.~5]{Gorankoetal:2016}, for different purposes. The $B_T$ is very important in proving our results below, and we thus give some explanations.

Intuitively, the conjuncts of $B_T$ consist of either $A_j$ or $\neg A_j$ for each $j\in\{1,\cdots,n\}$. For example, $B_\emptyset=\neg A_1\land \cdots\land\neg A_n$, $B_{\{1\}}=A_1\land\neg A_2\land\cdots\land \neg A_n$, and $B_{\{1,\cdots,n\}}=A_1\land \cdots\land A_n$. It is easy to check that $\bigvee_{T\subseteq \{1,\cdots,n\}}B_T$ is a tautology.

We are now ready to show the general validity: for all $n\in\mathbb{N}$, we have
\[\begin{array}{lll}
\vDash D(A_1,\cdots,A_n;B)&\lra&\bigwedge_{T\subseteq \{1,\cdots,n\}}(\Delta(B_T\to B)\vee\Delta(B_T\to\neg B)),\\
\end{array}\]
Therefore, $D$ is definable in terms of $\Delta$.
\begin{proposition}\label{prop.defineD}For all $n\in\mathbb{N}$,
$$\vDash D(A_1,\cdots,A_n;B)\lra\bigwedge_{T\subseteq \{1,\cdots,n\}}(\Delta(B_T\to B)\vee\Delta(B_T\to\neg B)).$$
\end{proposition}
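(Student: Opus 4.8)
The plan is to prove both directions of the biconditional semantically, using the \emph{state descriptions} $B_T$ as the central bookkeeping device. Two facts I would record at the outset are: first, that $\bigvee_{T\subseteq\{1,\cdots,n\}}B_T$ is a tautology while the $B_T$ are pairwise mutually exclusive, so that every world satisfies \emph{exactly one} $B_T$; and second, the crucial reformulation that for any two worlds $u,v$ we have $(\M,u\vDash A_i\iff\M,v\vDash A_i)$ for all $i\leq n$ precisely when $u$ and $v$ satisfy the very same $B_T$ --- indeed the unique $T$ with $u\vDash B_T$ is $T=\{i:\M,u\vDash A_i\}$. A third elementary observation I would record is that whenever $u\vDash B_T$, then $u\vDash B_T\to B$ iff $u\vDash B$, and $u\vDash B_T\to\neg B$ iff $u\nvDash B$; this is what converts statements about $B$ into statements about the conditionals $B_T\to(\neg)B$ appearing under $\Delta$.

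For the direction from left to right, I would fix a pointed model $(\M,w)$ satisfying $D(A_1,\cdots,A_n;B)$ together with an arbitrary $T$, and argue that one of the two disjuncts holds at $w$. Consider the $R$-successors of $w$ that satisfy $B_T$. By the reformulation, any two such successors agree on all the $A_i$, so by the hypothesis they agree on $B$; hence either every $B_T$-successor satisfies $B$, or every $B_T$-successor satisfies $\neg B$ (the case of no such successor falls under both). In the first case \emph{every} successor of $w$ --- whether or not it satisfies $B_T$ --- satisfies $B_T\to B$, those satisfying $B_T$ by assumption and the rest vacuously, so all successors agree on $B_T\to B$ and $\M,w\vDash\Delta(B_T\to B)$; symmetrically the second case gives $\M,w\vDash\Delta(B_T\to\neg B)$. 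Either way the $T$-th conjunct is satisfied.

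For the converse I would assume the conjunction holds at $w$ and verify the semantic clause for $D$. Take successors $u,v$ of $w$ agreeing on every $A_i$; by the reformulation they satisfy a common state description $B_{T_0}$. The $T_0$-th conjunct of the hypothesis gives either $\M,w\vDash\Delta(B_{T_0}\to B)$ or $\M,w\vDash\Delta(B_{T_0}\to\neg B)$. In the first case $u$ and $v$ agree on $B_{T_0}\to B$; since both satisfy $B_{T_0}$, the third observation turns this agreement directly into agreement on $B$, as required. The second case is handled symmetrically with $\neg B$, and we conclude $\M,w\vDash D(A_1,\cdots,A_n;B)$.

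I do not expect a serious obstacle: once the state descriptions are in place the argument is essentially bookkeeping. The one point needing care --- and the place most likely to hide a slip --- is the left-to-right step, where passing from ``all $B_T$-successors agree on $B$'' to the $\Delta$-statement relies on the non-$B_T$ successors satisfying the conditional $B_T\to(\neg)B$ \emph{vacuously}, so that all successors, not merely the $B_T$-ones, become unanimous on the conditional. Keeping the distinction between ``agree on'' (what $\Delta$ asserts) and ``all true'' clear throughout is the only delicate matter.
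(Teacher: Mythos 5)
Your proof is correct and follows essentially the same route as the paper's: both hinge on the observation that two successors agree on all $A_i$ exactly when they satisfy the same state description $B_T$, and then verify the two directions of the biconditional semantically. The only difference is presentational --- the paper argues both directions by contradiction (extracting from $\neg\Delta(B_{T'}\to B)$ and $\neg\Delta(B_{T'}\to\neg B)$ two witnesses satisfying $B_{T'}$ but disagreeing on $B$), whereas you argue them directly, handling the vacuous and empty-successor cases explicitly --- which amounts to the same argument in contrapositive form.
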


\begin{proof}
Given any $n\in \mathbb{N}$ and any pointed model $(\M,w)$. Suppose, for a contradiction, that $\M,w\vDash D(A_1,\cdots,A_n;B)$ but $\M,w\nvDash\bigwedge_{T\subseteq \{1,\cdots,n\}}(\Delta(B_T\to B)\vee\Delta(B_T\to\neg B))$. Then there exists $T'\subseteq \{1,\cdots,n\}$ such that $\M,w\nvDash \Delta(B_{T'}\to B)$ and $\M,w\nvDash\Delta (B_{T'}\to\neg B)$. It follows that there are $u,v$ with $wRu$ and $wRv$ such that $\M,u\vDash B_{T'}\land \neg B$ and $\M,v\vDash B_{T'}\land B$. Thus $w$ has two successors that agree on the truth value of $A_i$ for each $i\in[1,n]$ but not on the truth value of $B$, which is contrary to the supposition that $\M,w\vDash D(A_1,\cdots,A_n;B)$.

Conversely, suppose that $\M,w\nvDash D(A_1,\cdots,A_n;B)$. Then there are $u,v$ such that $wRu,wRv$ and $(\M,u\vDash A_i\iff \M,v\vDash A_i)$ for each $i\in[1,n]$, but it is {\em not} the case that $(\M,u\vDash B\iff\M,v\vDash B)$. W.l.o.g. we may assume that $\M,u\vDash B$ and $\M,v\nvDash B$. Thus $\M,u\nvDash \neg B$ and $\M,v\vDash\neg B$. Now let $T=\{k\in[1,n]\mid A_k\text{ are true at }u\}$. Recall that $u$ and $v$ agree on $A_i$ for all $i\in[1,n]$. Then it is easy to show that both $u$ and $v$ satisfy every conjunct $B_i$ of $B_T$: according to the construction of $B_T$ as above, if $i\in T$, then $B_i=A_i$ are true at $u$ (and thus also at $v$); if $i\notin T$, then $B_i=\neg A_i$ is also true at $u$ (thus also at $v$).

We have thus proved that $w$ has two successors $u$ and $v$, of which both satisfy $B_T$ but only $u$ satisfies $B$. Thus $\M,u\vDash B_T\to B$ but $\M,v\nvDash B_T\to B$, and hence $\M,w\nvDash\Delta(B_T\to B)$; similarly, $\M,u\nvDash B_T\to \neg B$ but $\M,v\vDash B_T\to\neg B$, and hence $\M,w\nvDash \Delta(B_T\to\neg B)$. Therefore, $\M,w\nvDash\Delta(B_T\to B)\vee\Delta(B_T\to\neg B)$ for some $T\subseteq\{1,\cdots,n\}$, i.e. $\M,w\nvDash\bigwedge_{T\subseteq \{1,\cdots,n\}}(\Delta(B_T\to B)\vee\Delta(B_T\to\neg B))$.
\end{proof}

According to the above analysis and Prop.~\ref{prop.defineD}, we have demonstrated our claim in the opening paragraph of this part.
\begin{proposition}\label{prop.exp-ld-lc}
All $\mathcal{L}_{D^i}$ (where $i\in\mathbb{N}\cup\{\infty\}$) are equally expressive on the class of all models. In particular, $\mathcal{L}_\Delta$ is equally expressive as $\mathcal{L}_D$ on that class. 
\end{proposition}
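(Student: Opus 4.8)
The plan is to close the chain of expressivity inclusions already recorded in the discussion preceding the statement. Writing $L\leq L'$ to mean that $L'$ is at least as expressive as $L$, the opening paragraphs established
$$\mathcal{L}_\Delta=\mathcal{L}_{D^0}\leq \mathcal{L}_{D^1}\leq\cdots\leq\mathcal{L}_{D^\infty}=\mathcal{L}_D,$$
by means of the validities $\vDash D^n(A_1,\cdots,A_n;B)\lra D^{n+1}(\top,A_1,\cdots,A_n;B)$. Hence it remains only to establish the reverse inclusion $\mathcal{L}_D\leq\mathcal{L}_\Delta$, i.e.\ that every $\mathcal{L}_D$ formula is equivalent on the class of all models to some $\mathcal{L}_\Delta$ formula. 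Once this is in hand, each $\mathcal{L}_{D^i}$ is squeezed between $\mathcal{L}_\Delta$ and $\mathcal{L}_D$, which then coincide, so all of them are equally expressive.

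First I would define a translation $t:\mathcal{L}_D\to\mathcal{L}_\Delta$ by recursion on formula complexity, setting $t(p)=p$, $t(\neg A)=\neg t(A)$, $t(A\land B)=t(A)\land t(B)$, and on the determinacy case
$$t\big(D(A_1,\cdots,A_n;B)\big)=\bigwedge_{T\subseteq\{1,\cdots,n\}}\big(\Delta(C_T\to t(B))\vee\Delta(C_T\to\neg t(B))\big),$$
where $C_T$ is the conjunction having $t(A_j)$ as a conjunct when $j\in T$ and $\neg t(A_j)$ when $j\notin T$. Since each argument $A_i$ and $B$ is strictly simpler than $D(A_1,\cdots,A_n;B)$, the recursion is well-founded, and the output manifestly contains no occurrence of $D$, so that $t(A)\in\mathcal{L}_\Delta$ for every $A\in\mathcal{L}_D$.

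Next I would prove by induction on $A\in\mathcal{L}_D$ that $\vDash A\lra t(A)$. The atomic and Boolean cases are immediate from the inductive hypothesis. For the case $A=D(A_1,\cdots,A_n;B)$, I would invoke Proposition~\ref{prop.defineD} to obtain
$$\vDash D(A_1,\cdots,A_n;B)\lra\bigwedge_{T\subseteq\{1,\cdots,n\}}\big(\Delta(B_T\to B)\vee\Delta(B_T\to\neg B)\big),$$
and then rewrite the right-hand side using the inductive hypotheses $\vDash A_j\lra t(A_j)$ and $\vDash B\lra t(B)$. The point requiring care is the replacement of valid equivalents: because the semantics of $\Delta$ depends only on the truth values of its argument at worlds, $\vDash B_T\lra C_T$ (which follows conjunct-by-conjunct from the $A_j$-hypotheses) together with $\vDash B\lra t(B)$ yields $\vDash\Delta(B_T\to B)\lra\Delta(C_T\to t(B))$, and likewise for the $\neg B$ disjunct. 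Chaining these with the displayed validity gives $\vDash A\lra t(A)$, completing the induction.

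The proof is essentially an assembly of machinery already in place, so I do not anticipate a serious obstacle; the single delicate point is the replacement-of-equivalents step in the $D$-case, where one must confirm that substituting the $\Delta$-equivalents $t(A_j),t(B)$ for the original arguments preserves validity, which amounts to the semantic congruence of $\Delta$ and of the Boolean connectives. With $\mathcal{L}_D\leq\mathcal{L}_\Delta$ thereby established and the reverse inclusion already noted, $\mathcal{L}_\Delta$ and $\mathcal{L}_D$ are equally expressive on the class of all models; and since $\mathcal{L}_\Delta\leq\mathcal{L}_{D^i}\leq\mathcal{L}_D$ holds for every $i\in\mathbb{N}\cup\{\infty\}$, the squeeze forces each $\mathcal{L}_{D^i}$ to be equally expressive as well.
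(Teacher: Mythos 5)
Your proposal is correct and follows essentially the same route as the paper: the chain $\mathcal{L}_\Delta=\mathcal{L}_{D^0}\leq\mathcal{L}_{D^1}\leq\cdots\leq\mathcal{L}_D$ via the $\top$-padding validities, closed by the definability of $D$ in terms of $\Delta$ from Prop.~\ref{prop.defineD}. The only difference is that you spell out the recursive translation and the induction establishing $\vDash A\lra t(A)$ (including the replacement-of-equivalents step), which the paper leaves implicit at this point and only writes down later as the translation $t_D$ in Sec.~\ref{sec.axiomatization-ld}.
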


As a corollary, we obtain
\begin{corollary}
$\mathcal{L}_\Delta$ is equally expressive as $\mathcal{L}_D$ on the class of universal models.
\end{corollary}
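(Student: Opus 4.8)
The plan is to deduce the corollary directly from Proposition~\ref{prop.exp-ld-lc} by observing that the class of universal models is a subclass of the class of all models, and that expressivity transfers downward along inclusions of model classes. Indeed, by the definition of expressivity, if $L2$ is at least as expressive as $L1$ on a class $M$ of models, witnessed for each $A\in L1$ by some $B\in L2$ with $\M,w\vDash A\iff\M,w\vDash B$ for all $\M\in M$ and all $w$, then the very same $B$ witnesses at-least-as-expressiveness on any subclass $M'\subseteq M$. Since every universal model is a model in the sense of $\mathcal{L}_D$ subject to the further constraint that $R$ is universal, the universal models form precisely such a subclass.

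First I would record the easy direction. Since $\mathcal{L}_\Delta=\mathcal{L}_{D^0}$ is literally a syntactic fragment of $\mathcal{L}_D$ (with $\Delta=D^0$), every $\mathcal{L}_\Delta$ formula is already an $\mathcal{L}_D$ formula serving as its own equivalent. Hence $\mathcal{L}_D$ is at least as expressive as $\mathcal{L}_\Delta$ on every class of models, in particular on universal models.

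For the converse, I would use the definability established in Proposition~\ref{prop.defineD}. Define a translation $t:\mathcal{L}_D\to\mathcal{L}_\Delta$ recursively, commuting with the Boolean connectives and setting
\[
t(D(A_1,\ldots,A_n;B))=\bigwedge_{T\subseteq\{1,\ldots,n\}}\bigl(\Delta(B^t_T\to t(B))\vee\Delta(B^t_T\to\neg t(B))\bigr),
\]
where $B^t_T$ is built from the translated $t(A_i)$ exactly as $B_T$ is built from the $A_i$. One then shows by induction on the structure of $A$ that $\vDash A\lra t(A)$ over the class of all models: the Boolean cases are immediate, and the modal case combines the induction hypotheses $\vDash A_i\lra t(A_i)$ and $\vDash B\lra t(B)$ with Proposition~\ref{prop.defineD}, using that the semantics of $D$ makes it a congruence with respect to semantic equivalence of its arguments, so replacing each argument by a truth-equivalent formula leaves the truth value of the $D$-formula unchanged.

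Because this equivalence is valid over all models, it is a fortiori valid over the subclass of universal models: for every universal model $\M$ and world $w$ we obtain $\M,w\vDash A\iff\M,w\vDash t(A)$ with $t(A)\in\mathcal{L}_\Delta$. Hence $\mathcal{L}_\Delta$ is at least as expressive as $\mathcal{L}_D$ on universal models, and together with the easy direction the two are equally expressive there. The only point requiring any care — and it is minor — is the congruence step for $D$ in the inductive translation argument; everything else is bookkeeping plus the transfer of validity from the full class to the subclass.
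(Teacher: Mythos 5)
Your proposal is correct and follows essentially the same route as the paper: the paper obtains this corollary directly from Proposition~\ref{prop.exp-ld-lc} (equal expressivity of $\mathcal{L}_\Delta$ and $\mathcal{L}_D$ over all models, itself resting on the translation via Proposition~\ref{prop.defineD}), with the implicit observation that at-least-as-expressiveness, being a universally quantified truth-preservation claim, restricts to any subclass such as the universal models. Your spelled-out translation $t$ and the congruence step for $D$ are exactly the paper's $t_D$ and the content of Proposition~\ref{prop.defineD}, so no new ingredient is needed.
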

It is known that
\begin{proposition}(c.f.~e.g.~\cite[Sec.~3.1]{Fanetal:2015})\label{prop.exp-ln-lc}
$\mathcal{L}_\Delta$ is less expressive than the standard modal logic $\mathcal{L}_\Box$ over the class of all models, the class of $\mathcal{D}$-models, the class of $\mathcal{B}$-models, the class of $4$-models, the class of $5$-models, whereas the two logics are equally expressive over the class of $\mathcal{T}$-models.
\end{proposition}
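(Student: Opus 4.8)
The plan is to prove the two halves separately, using throughout the elementary fact that $\Delta$ is definable from $\Box$ by $\vDash\Delta A\lra(\Box A\lor\Box\neg A)$, which is valid on all frames and so shows at once that $\mathcal{L}_\Box$ is at least as expressive as $\mathcal{L}_\Delta$ over every one of the listed classes. Consequently, for the strict cases it will suffice to exhibit, inside the relevant frame class, two pointed models that verify exactly the same $\mathcal{L}_\Delta$-formulas yet are separated by some $\mathcal{L}_\Box$-formula; for the reflexive case it remains only to recover $\Box$ from $\Delta$.

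For the class $\mathcal{T}$ I would verify the converse definability, namely that $\Box A\lra(A\land\Delta A)$ is valid on reflexive frames: since $wRw$, if $\M,w\vDash A$ and $\M,w\vDash\Delta A$ then every $R$-successor of $w$ agrees with $w$ on $A$ and hence satisfies $A$, giving $\Box A$; the reverse implication is immediate. This makes $\mathcal{L}_\Delta$ at least as expressive as $\mathcal{L}_\Box$ on $\mathcal{T}$, and together with the universally valid direction it yields equal expressiveness there.

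For the remaining classes the engine is the observation that whenever the evaluation world has at most one $R$-successor, every formula $\Delta A$ is true there --- vacuously when there is no successor, trivially when $u=v$ is forced. I would build each counterexample so that the designated worlds (and, where convenient, all worlds) have a unique successor; then by an induction on $\mathcal{L}_\Delta$-formulas every $\Delta$-subformula collapses to $\top$, so the truth of any $\mathcal{L}_\Delta$-formula at the designated world depends only on the propositional valuation there. Concretely: for all models, take $\M$ a single dead-end world $w$ against $\M'$ with $w'$ having one successor, separated by $\Box\bot$; for $\mathcal{D}$, $4$ and $5$ simultaneously, take the two-world models $w\to u$ with $u$ reflexive (this relation is at once serial, transitive and Euclidean), with $p$ true at $u$ in one model and false throughout the other, separated by $\Box p$; for $\mathcal{B}$, take the symmetric cycle $w\to u\to w$ with the analogous valuations, again separated by $\Box p$. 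In each pair the designated worlds agree on all propositional variables, so by the collapse they agree on all $\mathcal{L}_\Delta$-formulas while $\mathcal{L}_\Box$ tells them apart.

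The main obstacle will be the simultaneous bookkeeping of the frame condition and of $\Delta$-triviality: symmetry, transitivity and Euclideanness each force additional edges out of the successor, so the naive ``one successor, then stop'' model need not lie in the intended class. The resolution is to close the successor under a self-loop (for $\mathcal{D}$, $4$, $5$) or back to $w$ (for $\mathcal{B}$); this preserves the frame property while keeping the designated world's successor set a singleton and leaving every world functional, so the inductive collapse of $\Delta$-subformulas still applies. The concrete models and this verification may be taken from~\cite{Fanetal:2015}.
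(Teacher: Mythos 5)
Your proposal is correct, and it is essentially the proof the paper relies on: the paper itself gives no argument for this proposition but cites~\cite[Sec.~3.1]{Fanetal:2015}, where the standard proof runs exactly along your lines --- $\vDash\Delta A\lra(\Box A\lor\Box\neg A)$ for one direction, $\Box A\lra(A\land\Delta A)$ on reflexive frames for the converse, and pairs of pointed models in each frame class whose designated worlds have at most one successor, so that every $\Delta$-formula collapses to $\top$ and the models are $\mathcal{L}_\Delta$-indistinguishable yet separated by $\Box\bot$ or $\Box p$. Your handling of the frame conditions (closing with a self-loop for $\mathcal{D}$, $4$, $5$, or a symmetric edge for $\mathcal{B}$) is sound, and in fact only the designated world needs the at-most-one-successor property for the collapse argument to go through.
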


As a corollary of Prop.~\ref{prop.exp-ld-lc} and Prop.~\ref{prop.exp-ln-lc}, we obtain the following expressivity results.
\begin{corollary}\label{prop.exp-ln-ld}
$\mathcal{L}_D$ is less expressive than the standard modal logic $\mathcal{L}_\Box$ over the class of all models, the class of $\mathcal{D}$-models, the class of $\mathcal{B}$-models, the class of $4$-models, the class of $5$-models, whereas the two logics are equally expressive over the class of $\mathcal{T}$-models.
\end{corollary}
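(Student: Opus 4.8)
The plan is to treat this as a straightforward transitivity argument in the ordering of expressive power, combining the two propositions cited in the statement. The key observation that makes the argument go through for each named frame class is that the defining equivalence in Prop.~\ref{prop.defineD} is a validity over the class of \emph{all} models, and therefore remains valid when we restrict attention to any subclass --- in particular to the classes of $\mathcal{D}$-, $\mathcal{B}$-, $4$-, $5$-, and $\mathcal{T}$-models.

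First I would record that $\mathcal{L}_\Delta$ and $\mathcal{L}_D$ are equally expressive not merely over the class of all models (as packaged in Prop.~\ref{prop.exp-ld-lc}) but over each of the named frame classes. One direction is immediate, since $\Delta B\lra D(\top;B)$ is valid everywhere, so $\mathcal{L}_D$ is at least as expressive as $\mathcal{L}_\Delta$ on any class. The converse uses Prop.~\ref{prop.defineD}: because the translation of each $D$-formula into a $\mathcal{L}_\Delta$-formula is sound at every pointed model regardless of the accessibility relation, the very same translation witnesses that $\mathcal{L}_\Delta$ is at least as expressive as $\mathcal{L}_D$ on every subclass considered. Hence $\mathcal{L}_D$ and $\mathcal{L}_\Delta$ are equally expressive over each relevant class.

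Next I would chain these equalities with Prop.~\ref{prop.exp-ln-lc}. For the strict-inequality cases (all models, $\mathcal{D}$, $\mathcal{B}$, $4$, $5$), fix the relevant class $M$. Since $\mathcal{L}_\Box$ is at least as expressive as $\mathcal{L}_\Delta$ (via $\Delta A=_{df}\Box A\vee\Box\neg A$) and $\mathcal{L}_\Delta$ is at least as expressive as $\mathcal{L}_D$ from the previous step, transitivity of the preorder yields that $\mathcal{L}_\Box$ is at least as expressive as $\mathcal{L}_D$. For strictness, suppose toward a contradiction that $\mathcal{L}_D$ were at least as expressive as $\mathcal{L}_\Box$ on $M$; composing with the other half of the equal-expressivity (that $\mathcal{L}_\Delta$ is at least as expressive as $\mathcal{L}_D$) would make $\mathcal{L}_\Delta$ at least as expressive as $\mathcal{L}_\Box$, contradicting the strict inequality of Prop.~\ref{prop.exp-ln-lc}. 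For the equally-expressive case (the class $\mathcal{T}$), I would simply compose the two equalities $\mathcal{L}_D\equiv\mathcal{L}_\Delta$ and $\mathcal{L}_\Delta\equiv\mathcal{L}_\Box$, the latter being the $\mathcal{T}$-part of Prop.~\ref{prop.exp-ln-lc}.

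There is essentially no hard technical obstacle, since all the content is already packaged in the two cited propositions. The one point requiring care --- and the only place the argument could go wrong if stated carelessly --- is the transfer of $\mathcal{L}_D\equiv\mathcal{L}_\Delta$ from ``all models'' down to each individual frame class: this is legitimate precisely because Prop.~\ref{prop.defineD} delivers a \emph{uniform} translation whose correctness does not depend on any frame condition, so nothing about reflexivity, transitivity, Euclideanness, or symmetry is needed. Once that is noted, what remains is the purely formal manipulation of the at-least-as-expressive preorder described above.
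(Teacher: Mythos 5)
Your proposal is correct and follows essentially the same route as the paper, which proves this corollary simply by combining Prop.~\ref{prop.exp-ld-lc} and Prop.~\ref{prop.exp-ln-lc} without further comment. Your explicit justification of the one implicit step --- that the equal expressivity of $\mathcal{L}_\Delta$ and $\mathcal{L}_D$ transfers from the class of all models to each named subclass because the translation of Prop.~\ref{prop.defineD} is valid pointwise on every model, independently of any frame condition --- is exactly the detail the paper glosses over, and your handling of it is sound.
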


\weg{Since $(\Box(B_T\to B)\vee\Box(B_T\to\neg B))\lra(\Delta(B_T\to B)\vee\Delta(B_T\to\neg B))$, we have
\begin{corollary}\label{prop.BoxdefineD} $D$ is definable in terms of $\Box$. For all $n\in\mathbb{N}$,
$$\vDash D(A_1,\cdots,A_n;B)\lra\bigwedge_{T\subseteq \{1,\cdots,n\}}(\Box(B_T\to B)\vee\Box(B_T\to\neg B)),$$ thus $\mathcal{L}_D$ is a fragment of standard modal logic.
\end{corollary}}

\subsection{Axiomatizing $\mathcal{L}_D$ over various frame classes}\label{sec.axiomatization-ld}

In the above part, we have shown that $\mathcal{L}_D$ is equally expressive as $\mathcal{L}_\Delta$, as the determinacy operator $D$ and the non-contingency operator $\Delta$ are interdefinable with each other. As we know, axiomatizations of $\mathcal{L}_\Delta$ over various classes of frames have been given in the literature, see Sec.~\ref{sec.contingency} for a survey. We may thus obtain the axiomatizations of $\mathcal{L}_D$ from those of $\mathcal{L}_\Delta$ via some translations.

We first define a translation $t_\Delta:\mathcal{L}_\Delta\to\mathcal{L}_D$ as follows:
\[
\begin{array}{lllr}
t_\Delta(p)&=&p&\\
t_\Delta(\neg A)&=&\neg t_\Delta(A)&\\
t_\Delta(A\land B)&=&t_\Delta(A)\land t_\Delta(B)&\\
t_\Delta(\Delta A)&=&D(\epsilon;t_\Delta(A))&(\text{where }\epsilon\text{ is the empty sequence of formulas})\\
\end{array}
\]

The translation $t_D$ from $\mathcal{L}_D$ to $\mathcal{L}_\Delta$ is defined by the following:
\[
\begin{array}{lll}
t_D(p)&=&p\\
t_D(\neg A)&=&\neg t_D(A)\\
t_D(A\land B)&=&t_D(A)\land t_D(B)\\
t_D(D(A_1,\cdots,A_n;B))&=&\bigwedge_{T\subseteq \{1,\cdots,n\}}(\Delta(t_D(B_T)\to t_D(B))\vee\Delta(t_D(B_T)\to\neg t_D(B)))\\
\end{array}
\]
Where $B_T$ is defined as in Sec.~\ref{sec.exp-lcld}.

From the definition of $t_D$, it follows that $\mathcal{L}_D$ is at least exponentially more succinct than $\mathcal{L}_\Delta$.

\medskip

We now construct the proof systems of $\mathcal{L}_D$ over various frame classes. Recall the proof systems for $\mathcal{L}_\Delta$ in Section~\ref{sec.contingency}. Given any proof system $S_\Delta$ mentioned above, we define the system $S_D$ for $\mathcal{L}_D$ to be the extension of $S_\Delta$ plus the following axiom schemas: for each $n\in\mathbb{N}^+$,
\[
\begin{array}{ll}
\texttt{D}_n&D(A_1,\cdots,A_n;B)\lra\bigwedge_{T\subseteq \{1,\cdots,n\}}(\Delta(B_T\to B)\vee\Delta(B_T\to\neg B)).
\end{array}
\]

In the sequel, we will show the completeness of $S_D$ over various classes of frames. The strategy is via a reduction to the completeness of $S_\Delta$: if $S_\Delta$ is sound and strongly complete with respect to a class $F$ of frames, then so is $S_D$.

Firstly, we show that
\begin{proposition}\label{prop.equaltran}
For all $A\in\mathcal{L}_D$, we have $$\vdash_{S_D}A\lra t_D(A).$$
\end{proposition}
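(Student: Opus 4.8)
The plan is to argue by induction on the complexity (number of connectives) of $A\in\mathcal{L}_D$, using the fact that $S_\Delta\subseteq S_D$ so that every axiom schema and rule of the base $\Delta$-system is available. The base case $A=p$ is immediate, since $t_D(p)=p$ and $\vdash_{S_D}p\lra p$ is a propositional tautology. For the Boolean cases $A=\neg C$ and $A=C\land C'$ the inductive hypothesis gives $\vdash_{S_D}C\lra t_D(C)$ (and $\vdash_{S_D}C'\lra t_D(C')$); since $t_D$ commutes with $\neg$ and $\land$ by definition, the required $\vdash_{S_D}A\lra t_D(A)$ follows by propositional reasoning together with $\MP$.

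The only substantial case is $A=D(A_1,\ldots,A_n;B)$. Here I would first invoke the axiom schema $\texttt{D}_n$, which gives
\[
\vdash_{S_D} D(A_1,\ldots,A_n;B)\lra\bigwedge_{T\subseteq\{1,\ldots,n\}}\bigl(\Delta(B_T\to B)\vee\Delta(B_T\to\neg B)\bigr).
\]
The target $t_D(A)$ is the very same big conjunction, except that each $A_i$ (occurring inside $B_T$) and $B$ has been replaced by its translation. So it suffices to derive in $S_D$ the equivalence of the two conjunctions, and then chain this with the instance of $\texttt{D}_n$ by transitivity of $\lra$. By the inductive hypothesis applied to the proper subformulas $A_1,\ldots,A_n,B$, we have $\vdash_{S_D}A_i\lra t_D(A_i)$ for each $i$ and $\vdash_{S_D}B\lra t_D(B)$. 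Because $t_D$ acts homomorphically on the Boolean connectives, $t_D(B_T)$ is exactly the $B_T$-style conjunction built from the translated literals $t_D(A_i),\neg t_D(A_i)$; hence each equivalence $\vdash_{S_D}A_i\lra t_D(A_i)$ propagates through this Boolean context, giving $\vdash_{S_D}B_T\lra t_D(B_T)$ for every $T$ purely by propositional logic and $\MP$.

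Next I would descend into the scope of $\Delta$. From $\vdash_{S_D}B_T\lra t_D(B_T)$ and $\vdash_{S_D}B\lra t_D(B)$ one obtains, by propositional reasoning, $\vdash_{S_D}(B_T\to B)\lra(t_D(B_T)\to t_D(B))$ and $\vdash_{S_D}(B_T\to\neg B)\lra(t_D(B_T)\to\neg t_D(B))$. Applying the congruence rule $\REKw$ of the underlying $\Delta$-system then lets me rewrite equivalents under $\Delta$, yielding $\vdash_{S_D}\Delta(B_T\to B)\lra\Delta(t_D(B_T)\to t_D(B))$ and the analogous equivalence with $\neg B$. Combining these disjunct-wise and then conjunct-wise over all $T\subseteq\{1,\ldots,n\}$ by propositional logic produces the equivalence of the two big conjunctions, which finishes the modal case.

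The main obstacle is the rewriting inside the modality: equivalence of $\Delta$-subformulas is not obtainable by plain propositional substitution, so the argument genuinely depends on the replacement rule $\REKw$ being present in $S_\Delta$, together with the observation that $t_D$ is a Boolean homomorphism, so that the inductive hypothesis on $A_1,\ldots,A_n$ already suffices to treat $B_T$ even though $B_T$ is not itself a subformula of $A$. A minor point to check separately is the degenerate case $n=0$, where $B_\emptyset=\top$ and the conjunction collapses to a single disjunction; there the same reasoning applies after using $\REKw$ to erase $\top\to(\cdot)$ and $\EquiKw$ to identify the two disjuncts.
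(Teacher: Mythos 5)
Your proposal is correct and follows essentially the same route as the paper's own proof: induction on $A$, with the only substantive case $D(A_1,\cdots,A_n;B)$ handled by chaining the axiom $\texttt{D}_n$ with the equivalence of the two big conjunctions, the latter obtained by pushing the inductive hypotheses through the Boolean structure of $B_T$ and then rewriting under $\Delta$ via \REKw{} and \TAUT. Your explicit treatment of the degenerate case $n=0$ (erasing $\top\to(\cdot)$ by \REKw{} and merging the disjuncts by \EquiKw) is a worthwhile detail that the paper's proof leaves implicit, since its schema $\texttt{D}_n$ is stated only for $n\in\mathbb{N}^+$.
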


\begin{proof}
By induction on $A$. The base case and boolean cases are straightforward from the definition of $t_D$ and induction hypothesis. We only need to consider the case $D(A_1,\cdots,A_n;B)$.

By induction hypothesis, we have $\vdash_{S_D}A_i\lra t_D(A_i)$ for all natural numbers $i\in[1,n]$, and $\vdash_{S_D}B\lra t_D(B)$. Given any $T\subseteq \{1,\cdots,n\}$, according to the construction of $B_T$, we can thus obtain $\vdash_{S_D}B_T\lra t_D(B_T)$. By the axiom $\TAUT$ and the rule $\REKw$, we have
$$\vdash_{S_D}(\bigwedge_{T\subseteq \{1,\cdots,n\}}(\Delta(B_T\to B)\vee\Delta(B_T\to\neg B)))~~~~~~~~~~~~~~~~~~~~~~~~~~~~~~~~~$$
$$~~~~~~~~~~~~~~~~~~~~~~~~~~~~~~~~~~~~~~~~~~\lra(\bigwedge_{T\subseteq \{1,\cdots,n\}}(\Delta(t_D(B_T)\to t_D(B))\vee\Delta(t_D(B_T)\to\neg t_D(B))))$$
Then by the axiom $\texttt{D}_n$ and the definition of $t_D(A_1,\cdots,A_n;B)$, we obtain
$$\vdash_{S_D}D(A_1,\cdots,A_n;B)\lra t_D(A_1,\cdots,A_n;B).$$
Therefore we have now completed the proof of the proposition.
\end{proof}

\begin{theorem}
Let $S_\Delta$ be a proof system for $\mathcal{L}_\Delta$ given in Sec.~\ref{sec.contingency}. If $S_\Delta$ is sound and strongly complete with respect to a class $F$ of frames, then so is $S_D$.
\end{theorem}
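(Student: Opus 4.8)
The plan is to reduce both halves of the statement---soundness and strong completeness of $S_D$ over $F$---to the corresponding properties of $S_\Delta$, using the translation $t_D:\mathcal{L}_D\to\mathcal{L}_\Delta$ together with Prop.~\ref{prop.defineD} and Prop.~\ref{prop.equaltran} as the two load-bearing facts. For soundness, I would first note that the only axioms of $S_D$ not already in $S_\Delta$ are the schemas $\texttt{D}_n$, and these are valid on \emph{every} frame by Prop.~\ref{prop.defineD}, hence a fortiori on $F$. Since the remaining axioms and the rules are inherited from $S_\Delta$, which is sound on $F$ by hypothesis, a routine induction on the length of derivations shows that every $S_D$-theorem (more generally, every $S_D$-consequence of a premise set) is valid (resp.\ a semantic consequence) over $F$.

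For completeness, the key observation is that $t_D$ preserves truth and maps into $\mathcal{L}_\Delta$: by Prop.~\ref{prop.defineD} we have $\vDash A\lra t_D(A)$ for every $A\in\mathcal{L}_D$, and $t_D(A)$ is a genuine $\mathcal{L}_\Delta$ formula since the recursion only introduces $\Delta$ and Boolean connectives. So suppose $\Gamma\vDash_F A$ for $\Gamma\cup\{A\}\subseteq\mathcal{L}_D$. Applying $t_D$ pointwise and invoking truth-preservation, I obtain $t_D[\Gamma]\vDash_F t_D(A)$, now entirely within $\mathcal{L}_\Delta$. By the assumed strong completeness of $S_\Delta$ with respect to $F$, this gives $t_D[\Gamma]\vdash_{S_\Delta}t_D(A)$, and since $S_D$ extends $S_\Delta$, also $t_D[\Gamma]\vdash_{S_D}t_D(A)$.

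It then remains to transfer this derivation back across the translation inside $S_D$, and here Prop.~\ref{prop.equaltran} does the work: it yields $\vdash_{S_D}B\lra t_D(B)$ for every $B\in\mathcal{L}_D$. From each $\gamma\in\Gamma$ and the provable biconditional $\gamma\lra t_D(\gamma)$ I derive $t_D(\gamma)$ in $S_D$, so $\Gamma\vdash_{S_D}t_D[\Gamma]$; chaining this with $t_D[\Gamma]\vdash_{S_D}t_D(A)$ and then with the biconditional $t_D(A)\lra A$ delivers $\Gamma\vdash_{S_D}A$, as required. Because Prop.~\ref{prop.defineD} and Prop.~\ref{prop.equaltran} are already in hand, the argument is essentially bookkeeping rather than fresh mathematics; the point requiring care is that everything is carried out uniformly for an \emph{arbitrary} (possibly infinite) premise set $\Gamma$, so that the appeal to \emph{strong} completeness of $S_\Delta$ is legitimate, and that the translated premises and conclusion genuinely lie in $\mathcal{L}_\Delta$ before that appeal is made. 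This is the only place where one must be scrupulous, and I expect it to be the main obstacle to a fully rigorous write-up.
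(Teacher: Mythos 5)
Your proposal is correct and follows essentially the same route as the paper: soundness of the extra axioms $\texttt{D}_n$ from Prop.~\ref{prop.defineD}, translation of $\Gamma\vDash_F A$ into $\mathcal{L}_\Delta$ via $t_D$, an appeal to the strong completeness of $S_\Delta$, inclusion of $S_\Delta$-derivations into $S_D$, and transfer back across the provable biconditionals of Prop.~\ref{prop.equaltran}. The only cosmetic difference is that the paper justifies the semantic step $t_D(\Gamma)\vDash_F t_D(A)$ by combining Prop.~\ref{prop.equaltran} with the already-established soundness of $S_D$, whereas you invoke a semantic equivalence $\vDash A\lra t_D(A)$ attributed to Prop.~\ref{prop.defineD}, which strictly speaking requires a small induction on $A$ to handle nested occurrences of $D$; both versions go through.
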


\begin{proof}
The soundness of $S_D$ is immediate from the soundness of $S_\Delta$ and Prop.~\ref{prop.defineD}. As for the strong completeness, given any $\Gamma\cup\{\phi\}\subseteq \mathcal{L}_D$ and a class $F$ of frames, suppose that $\Gamma\vDash_FA$, then by Prop.~\ref{prop.equaltran} and the soundness of $S_D$, we have $t_D(\Gamma)\vDash_F t_D(A)$, where $t_D(\Gamma)=\{t_D(B)\mid B\in\Gamma\}$. As $S_\Delta$ is strongly complete with respect to $F$ and $t_D(\Gamma)\cup\{t_D(A)\}\subseteq \mathcal{L}_\Delta$, we obtain $t_D(\Gamma)\vdash_{S_\Delta}t_D(A)$. Since $S_D$ is an extension of $S_\Delta$, every deduction in $S_\Delta$ is also a deduction in $S_D$, whence $t_D(\Gamma)\vdash_{S_D}t_D(A)$. Then using Prop.~\ref{prop.equaltran} again, we derive that $\Gamma\vdash_{S_D}A$. Therefore $S_D$ is strongly complete with respect to $F$.
\end{proof}

By Thm.~\ref{thm.com-lc}, we have a lot of completeness results. In particular,
\begin{corollary}
\begin{enumerate}
\item $\SPLKw+D_n$ is sound and strongly complete with respect to the class of all frames, and also the class of $\mathcal{D}$-frames.
\item $\SPLKwTEuc+\texttt{D}_n$ is sound and strongly complete with respect to the class of $\mathcal{S}5$-frames.
\end{enumerate}
\end{corollary}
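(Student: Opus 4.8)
The plan is to obtain the corollary as a direct instantiation of the reduction theorem established just above, feeding it the completeness results for $\mathcal{L}_\Delta$ recorded in Thm.~\ref{thm.com-lc}. Recall that the reduction theorem asserts: whenever $S_\Delta$ is one of the proof systems for $\mathcal{L}_\Delta$ from Sec.~\ref{sec.contingency} that is sound and strongly complete with respect to a frame class $F$, the extension $S_D$ (namely $S_\Delta$ together with the schemas $\texttt{D}_n$) is sound and strongly complete with respect to the very same $F$. Hence all that remains is to pick out, in each clause, the appropriate base system $S_\Delta$ and the matching class $F$, and to observe that the resulting $S_D$ is exactly the system named in the statement.

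For clause~(1), I would take $S_\Delta=\SPLKw$. By Thm.~\ref{thm.com-lc}, $\SPLKw$ is sound and strongly complete with respect to the class of all frames, and also with respect to the class of $\mathcal{D}$-frames. Applying the reduction theorem twice, once with $F$ the class of all frames and once with $F$ the class of $\mathcal{D}$-frames, yields that $S_D=\SPLKw+\texttt{D}_n$ is sound and strongly complete with respect to each of these classes. The only bookkeeping point is to note that the $S_D$ produced by the reduction recipe from the base $S_\Delta=\SPLKw$ is, by the very definition of $S_D$, the system $\SPLKw+\texttt{D}_n$ appearing in the corollary.

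For clause~(2), I would take $S_\Delta=\SPLKwTEuc$. Again by Thm.~\ref{thm.com-lc}, this system is sound and strongly complete with respect to the class of $\mathcal{S}5$-frames. The reduction theorem then gives at once that $S_D=\SPLKwTEuc+\texttt{D}_n$ is sound and strongly complete with respect to the class of $\mathcal{S}5$-frames, which is precisely the assertion.

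There is essentially no hard step here: the genuine content has already been discharged in Prop.~\ref{prop.defineD} (the validity of the defining equivalence, which underwrites soundness), in Prop.~\ref{prop.equaltran} (the provable equivalence of each $\mathcal{L}_D$-formula with its $t_D$-translation), and in the reduction theorem itself. The only place demanding a moment of care is the matching of names --- verifying that the systems listed in the corollary coincide with the $S_D$ obtained from the correct base systems $S_\Delta$, and that those base systems are indeed among the ones covered by Thm.~\ref{thm.com-lc}. Once this identification is made, both clauses follow immediately.
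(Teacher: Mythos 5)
Your proposal is correct and matches the paper's own (implicit) argument exactly: the corollary is obtained by instantiating the reduction theorem with $S_\Delta=\SPLKw$ (sound and strongly complete for the class of all frames and the class of $\mathcal{D}$-frames by Thm.~\ref{thm.com-lc}) for clause~(1), and with $S_\Delta=\SPLKwTEuc$ (sound and strongly complete for the class of $\mathcal{S}5$-frames) for clause~(2). Nothing further is needed.
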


We have thus also shown that $\SPLKwTEuc+\texttt{D}_n$ is sound and strongly complete with respect to the class of $\mathcal{S}5$-models, thus every consistent set of such a system is satisfied on an $\mathcal{S}5$-model. By using the generated submodel method (c.f.~e.g.~\cite{blackburnetal:2001}), each such consistent set is also satisfied on a universal model. Therefore, $\SPLKwTEuc+\texttt{D}_n$ is sound and strongly complete over universal models. Thus we also give an alternative axiomatization for $\mathcal{L}_D$ over universal models, in contrast to~\cite[Sec.~7.2]{Gorankoetal:2016}.
\begin{corollary}
$\SPLKwTEuc+\texttt{D}_n$ completely axiomatizes $\mathcal{L}_D$ over the class of universal models.
\end{corollary}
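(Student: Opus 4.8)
The plan is to bootstrap from the immediately preceding corollary, which states that $\SPLKwTEuc+\texttt{D}_n$ is sound and strongly complete with respect to the class of $\mathcal{S}5$-frames, and to transfer both halves to universal models by exploiting the fact that, inside an $\mathcal{S}5$-model, $\mathcal{L}_D$ cannot distinguish the whole model from its point-generated submodel, which is universal. Soundness over universal models is immediate: the universal relation on a nonempty set is reflexive, symmetric and transitive, so every universal model is an $\mathcal{S}5$-model; hence the soundness over the larger class of $\mathcal{S}5$-frames already established specialises to universal frames.

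For strong completeness I would argue via satisfiability. Suppose $\Gamma\cup\{\phi\}\subseteq\mathcal{L}_D$ with $\Gamma\nvdash_{\SPLKwTEuc+\texttt{D}_n}\phi$, so that $\Gamma\cup\{\neg\phi\}$ is $(\SPLKwTEuc+\texttt{D}_n)$-consistent. By strong completeness over $\mathcal{S}5$-frames there is an $\mathcal{S}5$-model $\M=\lr{W,R,V}$ and a world $w$ with $\M,w\vDash\Gamma\cup\{\neg\phi\}$. I then pass to the point-generated submodel $\M_w$. Since $R$ is an equivalence relation, the set of worlds reachable from $w$ is exactly the equivalence class $[w]=\{v\mid wRv\}$, and the restriction of $R$ to $[w]$ is the universal relation on $[w]$, because any two members of an equivalence class are mutually related. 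Thus $\M_w$ is a universal model.

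The key lemma is that for every $A\in\mathcal{L}_D$ and every $v\in[w]$ we have $\M,v\vDash A$ iff $\M_w,v\vDash A$. This goes by induction on $A$: the atomic and Boolean cases are immediate, and for $D(A_1,\cdots,A_n;B)$ one observes that its truth at $v$ quantifies only over worlds $u,u'$ with $vRu$ and $vRu'$, all of which lie in $[w]$ (from $v\in[w]$ together with transitivity), and hence survive in $\M_w$; applying the induction hypothesis to $A_1,\cdots,A_n,B$ then delivers the equivalence. Instantiating at $v=w$ gives $\M_w,w\vDash\Gamma\cup\{\neg\phi\}$, so $\Gamma\cup\{\neg\phi\}$ is satisfiable on a universal model, i.e.\ $\Gamma\nvDash\phi$ over universal models; the contrapositive is the desired completeness.

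The main obstacle is this invariance lemma, and in particular confirming that the determinacy operator $D$ --- a polyadic, agreement-style modality rather than a standard box --- respects point-generated submodels. Once one notes that the semantics of $D$ inspects only $R$-successors of the evaluation point, and that $[w]$ is closed under $R$ by the $\mathcal{S}5$ properties, the classical generated-submodel argument transfers verbatim. This is exactly the line the surrounding text sketches, and it yields the promised alternative to the universal-model axiomatization of~\cite{Gorankoetal:2016}.
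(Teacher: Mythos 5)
Your proof is correct and takes essentially the same route as the paper: the paper likewise derives this corollary from the soundness and strong completeness of $\SPLKwTEuc+\texttt{D}_n$ over $\mathcal{S}5$-models together with the generated-submodel method, which you have simply spelled out in full (the point-generated submodel of an $\mathcal{S}5$-model is universal, and $\mathcal{L}_D$-truth is invariant under generated submodels since the semantics of $D$ only inspects $R$-successors of the evaluation point). No gaps.
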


\section{Conclusion and Future work}\label{sec.conclusions}

In this contribution, due to the philosophical importance of the concept of supervenience, and inspired by the notion of supervenience-determined consequence relation and the semantics of agreement operator in the literature, we have proposed a modal logic of supervenience $\mathcal{L}_\Rrightarrow$, which has the dyadic modality of supervenience as a sole primitive modality. We have argued for the naturalness of the semantics of $\Rrightarrow$, in that it corresponds to the supervenience-determined consequence relation, in a similar way that the strict implication corresponds to the inference-determined consequence relation. We have shown that this logic is more expressive than the modal logic of agreement $\mathcal{L}_O$, by introducing a notion of bisimulation for $\mathcal{L}_\Rrightarrow$. We have also presented a sound proof system $\mathbb{LS}$ for $\mathcal{L}_\Rrightarrow$, which, we think, captures the intuition of supervenience. We have also generalized the discussion on the dyadic operator of supervenience into the case where the supervenience operator takes any finitely many formulas as arguments. Moreover, we have shown that non-contingency logic $\mathcal{L}_\Delta$ and propositional logic of determinacy $\mathcal{L}_D$ are equally expressive over the class of all models, thus also equally expressive over the class of universal models; we have also given proof systems of $\mathcal{L}_D$ over various frame classes, whose completeness are shown via a reduction to the completeness proof for the corresponding axiomatizations for $\mathcal{L}_\Delta$, thereby resolving an open research direction listed in~\cite[Sec.~8.2]{Gorankoetal:2016}. Last but not least, we have also established an alternative axiomatization for $\mathcal{L}_D$ over universal models.

Although we do not have the completeness result for $\mathbb{LS}$, we do hope that our study will open up a new research direction for the philosophical notion of supervenience from the viewpoint of philosophical logic. There are lots of future work to be continued, some of which are listed below.

\subsection{Completeness proof for $\mathbb{LS}$}

Recall that $\mathbb{LS}$ has been established in Sec.~\ref{sec.soundsystem} and shown to be sound with respect to the class of all frames. Although we conjecture that $\mathbb{LS}$ is also strongly complete with respect to that class, the completeness result is open. If we adopt the Henkin's canonical model method, then a natural candidate for the canonical model of $\mathbb{LS}$ is as follows:
\begin{definition}[Canonical Model for $\mathbb{LS}$]
\begin{itemize}\
\item $W^c=\{w\mid w\text{ is a maximal consistent set for }\mathbb{LS}\}$
\item $S^c$ is defined such that for every $w\in W^c$, we have
\[\begin{array}{lcl}
S^c_wuv&\iff& \text{for all }A,B,\text{ if }A\vartriangleleft B\in w\text{ and }(A\in u\iff A\in v),\\
&&\text{then }(B\in u\iff B\in v).
\end{array}\]
\item $V^c(p)=\{w\in W^c\mid p\in w\}$.
\end{itemize}
\end{definition}


Then it is straightforward to show the `if' part in the Truth Lemma, i.e., for all $A\in L_\vartriangleleft$, for all $w\in W^c$, we have
$\M^c,w\vDash A\iff A\in w.$ In order to obtain the `only if' part, we need (and only need) to show: 
\begin{proposition}
let $C\vartriangleleft D\notin w$ with $w\in W^c$. Then there exist $u,v\in W^c$ such that $S^c_wuv$ and $(C\in u\iff C\in v)$ but $D\in u$ and $D\notin v$.
\end{proposition}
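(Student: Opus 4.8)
The plan is to construct two maximal consistent sets $u,v$ that jointly refute $C\vartriangleleft D$ at $w$: they must agree on $C$, disagree on $D$ (say $D\in u$ and $D\notin v$), and be $S^c_w$-related. The key observation is that the $S^c_w$ requirement can be repackaged as a single closure condition on the \emph{agreement set} $\mathcal{A}=\{F\mid F\in u\Leftrightarrow F\in v\}$: unwinding the definition of $S^c$, we have $S^c_wuv$ exactly when, for all $A,B$ with $A\vartriangleleft B\in w$, $A\in\mathcal{A}$ implies $B\in\mathcal{A}$; that is, $\mathcal{A}$ is forward closed under the relation defined by $A\vartriangleleft_w B\ :\Leftrightarrow\ A\vartriangleleft B\in w$. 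Since the agreement set of any two maximal consistent sets is automatically a Boolean subalgebra of the Lindenbaum algebra (agreement is preserved under $\neg$ and $\land$), the whole problem reduces to finding $u,v$ whose agreement set is a forward-$\vartriangleleft_w$-closed subalgebra containing $C$ but not $D$.

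First I would isolate the obligatory parts of the agreement and disagreement. Put $\Phi=\{F\mid C\vartriangleleft F\in w\}$ and $\Psi=\{A\mid A\vartriangleleft D\in w\}$. Using reflexivity ($\vdash A\vartriangleleft A$), transitivity (A4), the Boolean-compositionality law (item~\ref{item4}), and the fact that theorems supervene on anything (item~\ref{item8}), one checks that $\Phi$ is a subalgebra closed forward under $\vartriangleleft_w$, containing $C$ and all theorems, with $D\notin\Phi$ (this last fact is just the hypothesis $C\vartriangleleft D\notin w$); dually, $\Psi$ is closed \emph{backward} under $\vartriangleleft_w$, contains $D$, and omits $C$. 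By transitivity $\Phi$ and $\Psi$ are disjoint, again because $C\vartriangleleft D\notin w$. Thus $\Phi$ is what must sit inside $\mathcal{A}$, and $\Psi$ is what must sit inside its complement.

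The construction itself I would attempt by a simultaneous Lindenbaum extension. Fix an ultrafilter $g$ on the subalgebra $\Phi$, let both $u$ and $v$ extend $g$, and seed them with $D\in u$ and $\neg D\in v$; then enumerate all formulas and decide each one in $u$ and in $v$ in lock-step, maintaining two invariants: (a) the pair of finite sets built so far is jointly consistent, and (b) the current agreement set never contains a formula $A$ for which some $A\vartriangleleft B\in w$ has $B$ already forced into disagreement (equivalently, we refuse to let $u$ and $v$ agree on anything lying in the $\vartriangleleft_w$-preimage of the growing disagreement set). If the enumeration can always be carried through, the resulting $\mathcal{A}$ is forward closed with $\Phi\subseteq\mathcal{A}$ and $\mathcal{A}\cap\Psi=\emptyset$, so $C\in\mathcal{A}$, $D\notin\mathcal{A}$, and $S^c_wuv$ hold by the reformulation above, giving the required witnesses.

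The main obstacle is precisely invariant (b): showing that at each step at least one of the two admissible sign choices for the new formula keeps both sets consistent \emph{and} preserves forward-closure of the agreement set. This is exactly where the naive ``signing'' templates break down. One cannot simply demand that $u,v$ disagree on all subvenient formulas, nor that they agree on all of them, because reflexivity forces $C\vartriangleleft C\in w$ and $D\vartriangleleft D\in w$, and transitivity (A4) then propagates sign constraints that clash. A successful argument therefore appears to require a dedicated pair-extension lemma: that every finite jointly consistent pair whose agreement is forward-$\vartriangleleft_w$-closed can be extended by one further formula while preserving both properties. Establishing (or refuting) this lemma is the gap that leaves the strong completeness of $\mathbb{LS}$ open; should it fail, the natural remedy would be to extract the missing interaction principle and adopt it as an additional axiom.
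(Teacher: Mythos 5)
Your proposal does not prove the proposition, and you say so yourself: everything rests on the ``pair-extension lemma'' --- that at each stage of the simultaneous Lindenbaum construction some sign choice for the next formula keeps the two finite sets jointly consistent \emph{and} keeps the growing agreement set forward-closed under $\vartriangleleft_w$ --- and you leave that lemma unestablished. This is a genuine gap, not a deferred routine detail: forward-closure is a global constraint quantifying over all pairs $A\vartriangleleft B\in w$, and deciding one new formula can simultaneously create agreements (generating new closure obligations) and disagreements (forbidding agreements in the $\vartriangleleft_w$-preimage), so no standard finitary Lindenbaum-style consistency argument obviously discharges it. The preparatory material you do establish (the repackaging of $S^c_wuv$ as forward-closure of the agreement set, and the closure properties of $\Phi=\{F\mid C\vartriangleleft F\in w\}$ and $\Psi=\{A\mid A\vartriangleleft D\in w\}$ via reflexivity, A4, Boolean compositionality, and the rule that theorems supervene on everything) is correct, but it only organizes the problem; it does not solve it.

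You should know, however, that the paper itself contains no proof of this proposition. It is stated in the future-work section precisely as the missing step of the Truth Lemma for $\mathbb{LS}$, and the authors say explicitly that they do not know how to prove it and do not even know whether it is true; the strong completeness of $\mathbb{LS}$ is left open on exactly this point. (The source also contains an abandoned, commented-out attempt that tries to build $u$ and $v$ from signings so that they disagree on all subvenient formulas, or agree on all of them; it breaks off mid-sentence, and both templates fail for precisely the reason you identify --- reflexivity places $C$ and $D$ themselves among the subvenient formulas, so blanket disagreement contradicts the required agreement on $C$, and blanket agreement contradicts the required disagreement on $D$.) So your analysis is sound as far as it goes, in fact slightly sharper than the paper's own abandoned attempt, and your closing sentence correctly locates the open kernel; but neither you nor the paper has a proof, and the proposition should be regarded as a conjecture, not a established result.
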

We do not know how to show this proposition in the current stage, we even do not whether it holds. We leave it for future work. If the above proposition is shown, then we get not only the completeness result over the class of all frames, but also that over the class of reflexive and symmetric frames, since $S_w^c$ is reflexive and symmetric for all $w\in W^c$. Besides, we can also investigate the extensions of $\mathbb{LS}$ on special frame classes, specifically, the axiomatizations over universal frames/models.

\subsection{Comparing the expressive powers of $\mathcal{L}_{\Rrightarrow^n}$ and $\mathcal{L}_{\Rrightarrow^{n+1}}$}

We also leave Conjecture~\ref{conj.exp-ls} for future work, which states that for all $n\in\mathbb{N}$, $\mathcal{L}_{\vartriangleleft^{n+1}}$ is more expressive than $\mathcal{L}_{\vartriangleleft^n}$. The proof strategy should be similar to that used in showing that $\mathcal{L}_\Rrightarrow(=\mathcal{L}_{\vartriangleleft^1})$ is more expressive than $\mathcal{L}_O(=\mathcal{L}_{\vartriangleleft^0})$. In detail, first introduce a bisimulation notion for $\mathcal{L}_{\vartriangleleft^n}$, such that any such bisimilar models are indistinguishable by any $\mathcal{L}_{\vartriangleleft^n}$ formulas; then we construct two such bisimilar models which can be distinguished by an $\mathcal{L}_{\vartriangleleft^{n+1}}$ formula. Furthermore, the bisimulation notion for $\mathcal{L}_{\vartriangleleft}$ should be an adaption of that for $\mathcal{L}_{O}$, and both of the required bisimilar models should have at least $2^{n+1}$ worlds, just as we need at least $4=2^2$ worlds in showing Prop.~\ref{prop.moreexp}. For the details, refer to Sec.~\ref{sec.exp-ls-lo}. Once we do this, we can obtain an expressive hierarchy from the weakest $\mathcal{L}_O$ to the strongest $\mathcal{L}_{\vartriangleleft^\infty}$.

\subsection{Relativized agreement operator}

It is worth mentioning that the semantics of $A\vartriangleleft B$ can be rephrased as follows:
\[\begin{array}{|lcl|}
\hline
\M,w\vDash A\vartriangleleft B&\iff&\text{for all }u,v\in\M\text{ such that }S_wuv, \text{ if }(\M,u\vDash A \text{ and } \M,v\vDash A),\\
&&\text{ then }(\M,u\vDash B\iff \M,v\vDash B), \text{ and},\\
&&\text{for all }u,v\in\M\text{ such that }S_wuv, \text{ if }(\M,u\vDash \neg A \text{ and } \M,v\vDash\neg A),\\
&&\text{ then }(\M,u\vDash B\iff \M,v\vDash B).\\
\hline
\end{array}
\]

If we define a relativized agreement operator $O(\cdot,\cdot)$ (denoted $O^r$), as
\[\begin{array}{lcl}
\M,w\vDash O(A,B)&\iff&\text{for all }u,v\in\M\text{ such that }S_wuv, \text{ if }(\M,u\vDash A \text{ and } \M,v\vDash A),\\
&&\text{then }(\M,u\vDash B\iff \M,v\vDash B).\\
\end{array}\]

Roughly, $O(A,B)$ means that the truth value of $B$ is fixed under the condition that $A$. Then $(A\vartriangleleft B)$ is logically equivalent to $O(A,B)\land O(\neg A,B)$, and hence we have that $\mathcal{L}_{O^r}$ is at least as expressive as $L_\vartriangleleft$, and that also more expressive than the unrelativized version $\mathcal{L}_O$, where $\mathcal{L}_{O^r}$ denote the extension of propositional logic with the relativized agreement operator $O^r$. We conjecture that $\mathcal{L}_{O^r}$ is more expressive than $\mathcal{L}_\vartriangleleft$, since even the simplest $\mathcal{L}_{O^r}$ formula $O(p,q)$ seems to be not expressible in $\mathcal{L}_\vartriangleleft$. We left it for future work. Another research worth investigating is, of course, to axiomatize $\mathcal{L}_{O^r}$ over various frame classes.

\subsection{Relativized supervenience operator}

We can also generalize the supervenience operator into a relativized version. In detail, we introduce a relativized supervenience operator $\cdot\Rrightarrow^\cdot\cdot$ (denoted $\Rrightarrow^r$) into propositional logic, and we denote the resulting language as $\mathcal{L}_{\vartriangleleft^r}$.

Formally, the new operator is defined as follows:
\[\begin{array}{lcl}
\M,w\vDash A\Rrightarrow^C B&\iff&\text{for all }u,v\in\M\text{ such that }S_wuv\text{ and }\M,u\vDash C\text{ and }\M,v\vDash C, \\
&&\text{if }(\M,u\vDash A \iff \M,v\vDash A),\\
&&\text{then }(\M,u\vDash B\iff \M,v\vDash B).\\
\end{array}\]
Intuitively, $A\Rrightarrow^C B$ says that under the condition $C$, $B$ supervenes on $A$. Now the dyadic supervenience operator $\Rrightarrow$ is definable in terms of $\Rrightarrow^r$, as $(A\Rrightarrow B)=_{df}(A\Rrightarrow^T B)$. Similar to the case for $\mathcal{L}_{O^r}$, we conjecture that $\mathcal{L}_{\vartriangleleft^r}$ is more expressive than $\mathcal{L}_{\vartriangleleft}$, for $p\vartriangleleft^r q$ seems to be undefinable with any $\mathcal{L}_{\vartriangleleft}$ formulas.

Moreover, we can further compare the expressive powers of $\mathcal{L}_{\vartriangleleft^r}$ and $\mathcal{L}_{O^r}$. Note that $\mathcal{L}_{\vartriangleleft^r}$ is at least as expressive as $\mathcal{L}_{O^r}$, due to the logical equivalence $\vDash O(A,B)\lra (\top \Rrightarrow^AB)$ (or $\vDash O(A,B)\lra (\bot \Rrightarrow^AB)$). However, the comparison in expressivity is different from the case between $\mathcal{L}_{\vartriangleleft}$ and $\mathcal{L}_{O}$. Recall that $\mathcal{L}_{\vartriangleleft}$ is more expressive than $\mathcal{L}_{O}$ (Prop.~\ref{prop.moreexp}). Turn to $\mathcal{L}_{\vartriangleleft^r}$ and $\mathcal{L}_{O^r}$, we observe that $A\Rrightarrow^C B$ is logically equivalent to an $\mathcal{L}_{O^r}$ formula $O(A\land C,B)\land O(\neg A\land C,B)$. Therefore, $\mathcal{L}_{\vartriangleleft^r}$ and $\mathcal{L}_{O^r}$ are equally expressive. Thus once we have the comparison results involving one of the two logics, we have also the same comparison results for the other logic. And also, once we have the axiomatizations for one of the two logics, we have also the corresponding axiomatizations for the other logic, via a reduction of completeness results.

\subsection{Characterizing the supervenience-determined consequence relation}

Traditionally, Tarski's consequence relation, called the inference-determined consequence relation in Humberstone~\cite{Humberstone:1993}, has been used as a standard notion of logical consequence, and various proof systems have been established to characterize this notion.

Instead of the standard notion of Tarski's consequence relation, we could also use the notion of supervenience-determined consequence relation as logical consequence, and present Hilbert- (and Gentzen-, etc.) style proof systems to characterize this kind of logical consequence. 

\subsection{Combing the notions of ceteris paribus and supervenience}

Ceteris paribus, meaning ``all else being equal'' or ``(all) others being held constant'', is a very common term in our daily life. The term has been widely used in defining the laws in special sciences, see~e.g.~\cite{sep-ceteris-paribus} for a survey. This notion has also been applied to analyse the notion of preference~\cite{Doyleetal:1994,vanbenthemetal:2009}, counterfactual reasoning~\cite{Girardetal:2016}, agency and games~\cite{Grossietal:2013,Grossietal:2015}, Fitch's paradox~\cite{Proiettietal:2010}, and the future contingents problem~\cite{Proietti:2009}, etc..

It may be interesting to combine ceteris paribus and supervenience, since in that case we can naturally express the statements such as ``Ceteris paribus, $B$ supervenes on $A$'', or more general, ``Ceteris paribus, $B$ supervenes on $A_1,\cdots,A_n$''. We can then compare the new logics with our modal logic of supervenience $\mathcal{L}_\vartriangleleft$, in both expressivity and axiomatizations.

\section*{Acknowledgements}

This research is funded by China Postdoctoral Science Foundation [Grant number: 2016M590061]. The author would like to thank Lloyd Humberstone for discussions on an earlier version of this paper, and also for providing several references.

\bibliographystyle{plain}
\bibliography{biblio2016}

\begin{thebibliography}{10}

\bibitem{blackburnetal:2001}
P.~Blackburn, M.~de~Rijke, and Y.~Venema.
\newblock {\em Modal Logic}.
\newblock Cambridge University Press, Cambridge, 2001.
\newblock Cambridge Tracts in Theoretical Computer Science 53.

\bibitem{Stoljar:2015}
Stoljar Daniel.
\newblock Physicalism.
\newblock In Edward~N. Zalta, editor, {\em The Stanford Encyclopedia of
  Philosophy}, 2015.

\bibitem{Davidson:1970}
D.~Davidson.
\newblock Mental events.
\newblock In L.~Foster and J.~W. Swanson, editors, {\em Experience and Theory},
  pages 79--101. Amherst: University of Massachusetts Press, 1970.
\newblock Reprinted in \cite{Davidson:1980}, 207--225.

\bibitem{Davidson:1980}
D.~Davidson.
\newblock {\em Essays on Actions and Events}.
\newblock Oxford: Clarendon Press, 1980.

\bibitem{Davidson:1985}
D.~Davidson.
\newblock Replies to essays {X-XII}.
\newblock In Bruce Vermazen and Merrill~B. Hintikka, editors, {\em Essays on
  Davidson: Actions and Events}, pages 242--452. Oxford: Clarendon Press, 1985.

\bibitem{Davidson:1973}
Donald Davidson.
\newblock The material mind.
\newblock In P.~Suppes et. al., editor, {\em Logic, Methodology, and the
  Philosophy of Science}, pages 709--722. Amsterdam: North-Holland, 1973.
\newblock Reprinted in~\cite{Davidson:1980}.

\bibitem{Doyleetal:1994}
John Doyle and Michael~P. Wellman.
\newblock Representing preferences as ceteris paribus comparatives.

\bibitem{Fanetal:2014}
J.~Fan, Y.~Wang, and H.~van Ditmarsch.
\newblock Almost necessary.
\newblock In {\em Advances in Modal Logic}, volume~10, pages 178--196, 2014.

\bibitem{Fanetal:2015}
J.~Fan, Y.~Wang, and H.~van Ditmarsch.
\newblock Contingency and knowing whether.
\newblock {\em The Review of Symbolic Logic}, 8(1):75--107, 2015.

\bibitem{Girardetal:2016}
Patrick Girard and Marcus~Anthony Triplett.
\newblock Ceteris paribus logic in counterfactual reasoning.
\newblock In R.~Ramanujam, editor, {\em TARK 2015}, pages 176--193, 2016.

\bibitem{Gorankoetal:2016}
V.~Goranko and A.~Kuusisto.
\newblock Logics for propositional determinacy and independence.
\newblock arXiv preprint arXiv:1609.07398, 2016.

\bibitem{Grossietal:2013}
Davide Grossi, Emiliano Lorini, and Fran\c{c}ois Schwarzentruber.
\newblock Ceteris paribus structure in logics of game forms.
\newblock In {\em TARK 2013}, pages 94--104, 2013.

\bibitem{Grossietal:2015}
Davide Grossi, Emiliano Lorini, and Fran\c{c}ois Schwarzentruber.
\newblock The ceteris paribus structure of logics of game forms.
\newblock {\em Journal of Artificial Intelligence Research}, 53:96--126, 2015.

\bibitem{Hare:1952}
R.~M. Hare.
\newblock {\em The Language of Morals}.
\newblock Oxford: Oxford University Press, 1952.

\bibitem{Haugeland1982-HAUWS}
John Haugeland.
\newblock Weak supervenience.
\newblock {\em American Philosophical Quarterly}, 19(1):93--103, 1982.

\bibitem{Hellmanetal:1975}
G.~Hellman and F.~Thompson.
\newblock Physicalism: Ontology, determination, and reduction.
\newblock {\em The Journal of Philosophy}, 72:551--564, 1975.

\bibitem{Hofweber:2005}
T.~Hofweber.
\newblock Supervenience and object-dependent properties.
\newblock {\em The Journal of philosophy}, 102(1):5--32, 2005.

\bibitem{Horgan:1981}
T.~Horgan.
\newblock Token physicalism, supervenience, and the generality of physics.
\newblock {\em Synthese}, 49(3):395--413, 1981.

\bibitem{Horgan:1982}
T.~Horgan.
\newblock Supervenience and microphysics.
\newblock {\em Pacific Philosophical Quarterly}, 63:29--43, 1982.

\bibitem{Horgan:1984}
T.~Horgan, editor.
\newblock {\em Southern Journal of Philosophy 22: The Spindel Conference 1983
  Supplement. Supervenience}, 1984.

\bibitem{Horgan:1993}
T.~Horgan.
\newblock From supervenience to superdupervenience: meeting the demands of a
  material world.
\newblock {\em Mind}, 102:555--586, 1993.

\bibitem{Humberstone:1992}
L.~Humberstone.
\newblock Some structural and logical aspects of the notion of supervenience.
\newblock {\em Logique et Analyse}, 35:101--137, 1992.

\bibitem{Humberstone:1993}
L.~Humberstone.
\newblock Functional dependencies, supervenience, and consequence relations.
\newblock {\em Journal of Logic, Language and Information}, 2:309--336, 1993.

\bibitem{Humberstone95}
L.~Humberstone.
\newblock The logic of non-contingency.
\newblock {\em Notre Dame Journal of Formal Logic}, 36(2):214--229, 1995.

\bibitem{Humberstone:1996}
L.~Humberstone.
\newblock Classes of valuations closed under operations galois-dual to boolean
  sentence connectives.
\newblock {\em Publications of the Research Institute for Mathematical
  Sciences}, 32(1):9--84, 1996.

\bibitem{Humberstone:1998}
L.~Humberstone.
\newblock Note on supervenience and definability.
\newblock {\em Notre Dame Journal of Formal Logic}, 39(2):243--252, 1998.

\bibitem{Humberstone:2002}
L.~Humberstone.
\newblock The modal logic of agreement and noncontingency.
\newblock {\em Notre Dame Journal of Formal Logic}, 43(2):95--127, 2002.

\bibitem{Humberstone:connectivebook}
L.~Humberstone.
\newblock {\em The Connectives}.
\newblock The MIT press, 2011.

\bibitem{Humberstone13:logicalrelations}
L.~Humberstone.
\newblock Logical relations.
\newblock {\em Philosophical Perspectives}, 27:175--230, 2013.

\bibitem{Ishiguro:1972}
H.~Ishiguro.
\newblock {\em Leibniz's Philosophy of Logic and Language}.
\newblock Ithaca, 1972.

\bibitem{Kim:1978}
J.~Kim.
\newblock Supervenience and nomological incommensurables.
\newblock {\em American Philosophical Quarterly}, 15(2):149--156, 1978.

\bibitem{Kim:1979}
J.~Kim.
\newblock Causality, identity, and supervenience in the mind-body problem.
\newblock {\em Midwest Studies in Philosophy}, 4(1):31--49, 1979.

\bibitem{Kim:1982}
J.~Kim.
\newblock Psychophysical supervenience.
\newblock {\em Philosophical Studies}, 41(1):51--70, 1982.

\bibitem{Kim:1984}
J.~Kim.
\newblock Concepts of supervenience.
\newblock {\em Philosophy and phenomenological research}, 45(2):153--176, 1984.
\newblock Reprinted in Kim~\cite[pp.~53--78]{Kim:1993}.

\bibitem{Kim:1987}
J.~Kim.
\newblock `strong' and `global' supervenience revisited.
\newblock {\em Philosophy and phenomenological research}, 48(2):315--326, 1987.
\newblock Reprinted in ~\cite[pp.~79--91]{Kim:1993}.

\bibitem{Kim:1988}
J.~Kim.
\newblock Supervenience for multiple domains.
\newblock {\em Philosophical Topics}, 16(1):129--150, 1988.
\newblock Reprinted in~\cite[pp.~109--130]{Kim:1993}.

\bibitem{Kim:1993}
J.~Kim.
\newblock {\em Supervenience and Mind: Selected Philosophical Essays}.
\newblock Cambridge: Cambridge University Press, 1993.

\bibitem{Kim1982-KIMPSA}
Jaegwon Kim.
\newblock Psychophysical supervenience as a mind-body theory.
\newblock {\em Cognition and Brain Theory}, 5:129--147, 1982.

\bibitem{Kim1984-KIMEAS}
Jaegwon Kim.
\newblock Epiphenomenal and supervenient causation.
\newblock {\em Midwest Studies in Philosophy}, 9(1):257--270, 1984.
\newblock Reprinted in Kim~\cite[pp.~92--108]{Kim:1993}.

\bibitem{DBLP:journals/ndjfl/Kuhn95}
S.~Kuhn.
\newblock Minimal non-contingency logic.
\newblock {\em Notre Dame Journal of Formal Logic}, 36(2):230--234, 1995.

\bibitem{Leuenberger:2009}
S.~Leuenberger.
\newblock What is global supervenience?
\newblock {\em Synthese}, 170(1):115--129, 2009.

\bibitem{Lewis:1918}
C.~I. Lewis.
\newblock {\em A Survey of Symbolic Logic}.
\newblock Berkeley: University of California Press, 1918.
\newblock Reprinted by Dover Publications (New York), 1960, with the omission
  of Chapters 5-6.

\bibitem{Lewis:1983}
D.~Lewis.
\newblock New work for a theory of universals.
\newblock {\em Australasian Journal of Philosophy}, 61(4):343--377, 1983.

\bibitem{Lewis:1986}
D.~Lewis.
\newblock {\em On the Plurality of Worlds}.
\newblock Oxford: Oxford University Press, 1986.

\bibitem{Lewis:1986a}
D.~Lewis.
\newblock {\em Philosophical Papers Volume II}.
\newblock Oxford: Oxford University Press, 1986.

\bibitem{McLaughlinetal-supervenience}
Brian McLaughlin and Karen Bennett.
\newblock Supervenience.
\newblock In Edward~N. Zalta, editor, {\em The Stanford Encyclopedia of
  Philosophy}. Spring 2014 edition, 2014.

\bibitem{MR66}
H.~Montgomery and R.~Routley.
\newblock Contingency and non-contingency bases for normal modal logics.
\newblock {\em Logique et Analyse}, 9:318--328, 1966.

\bibitem{Moore:1922}
G.~E. Moore.
\newblock {\em Philosophical Studies}.
\newblock London: Routledge, 1922.

\bibitem{Proietti:2009}
C.~Proietti.
\newblock Ceteris paribus modalities and the future contingents problem.
\newblock In Lena Kurzen and Fernando~R. Vel\'{a}zquez-Quesada, editors, {\em
  Logics for Dynamics of Information and Preferences}, pages 238--255. 2009.

\bibitem{Proiettietal:2010}
C.~Proietti and G.~Sandu.
\newblock Fitch's paradox and ceteris paribus modalities.
\newblock {\em Synthese}, 173(1):75--87, 2010.

\bibitem{sep-ceteris-paribus}
Alexander Reutlinger, Gerhard Schurz, and Andreas Hüttemann.
\newblock Ceteris paribus laws.
\newblock In Edward~N. Zalta, editor, {\em The Stanford Encyclopedia of
  Philosophy}. Fall 2015 edition, 2015.

\bibitem{Rickles:2006}
D.~Rickles.
\newblock Supervenience and determination.
\newblock In J.~Fieser and B.~Dowden, editors, {\em Internet Encyclopedia of
  Philosophy}, 2006.

\bibitem{vanbenthemetal:2009}
J.~van Benthem, P.~Girard, and O.~Roy.
\newblock Everything else being equal: A modal logic for ceteris paribus
  preferences.
\newblock {\em Journal of Philosophical Logic}, 38(1):83--125, 2009.

\bibitem{hoeketal:2004}
W.~van~der Hoek and A.~Lomuscio.
\newblock A logic for ignorance.
\newblock {\em Electronic Notes in Theoretical Computer Science},
  85(2):117--133, 2004.

\bibitem{Hansetal:2014}
H.~van Ditmarsch, J.~Fan, W.~van~der Hoek, and P.~Iliev.
\newblock Some exponential lower bounds on formula-size in modal logic.
\newblock In {\em Advances in Modal Logic}, volume~10, pages 139--157, 2014.

\bibitem{Supervenience-wiki}
Wikipedia.
\newblock Supervenience.
\newblock \url{https://en.wikipedia.org/wiki/Supervenience}, Retrieved on 10
  Nov. 2016.

\bibitem{DBLP:journals/ndjfl/Zolin99}
E.~Zolin.
\newblock Completeness and definability in the logic of noncontingency.
\newblock {\em Notre Dame Journal of Formal Logic}, 40(4):533--547, 1999.

\end{thebibliography}

\end{document}